\def\Filedate{2011/01/24} 
\def\Fileversion{1.15}  
\definecolor {processblue}{cmyk}{0.96,0,0,0}
\pgfplotsset{compat=1.16}
\newcommand*{\pkg}[1]{{\mdseries\textsf{#1}}}
\newtheorem{theorem}{Theorem}[section]
\newtheorem{proposition}[theorem]{Proposition}
\newtheorem{lemma}[theorem]{Lemma}
\newtheorem{corollary}[theorem]{Corollary}
\newtheorem{definition}[theorem]{Definition}
\theoremstyle{remark}
\newtheorem{example}[theorem]{Example}
\theoremstyle{remark}
\newtheorem{remark}[theorem]{Remark}
\newtheorem{question}[theorem]{Question}
\DeclareMathOperator{\spec}{Spec}
\DeclareMathOperator{\sw}{sw}
\DeclareMathOperator{\Frac}{Frac}
\DeclareMathOperator{\cov}{cov}
\DeclareMathOperator{\divisor}{div}
\DeclareMathOperator{\ram}{ram}
\DeclareMathOperator{\Image}{Im}
\DeclareMathOperator{\C}{\mathcal{C}}
\DeclareMathOperator{\ASW}{\mathcal{ASW}}
\DeclareMathOperator{\Proj}{Proj}
\DeclareMathOperator{\gal}{Gal}
\DeclareMathOperator{\MggpG}{\mathcal{M}_{g,g',k}[G]}
\DeclareMathOperator{\Mgg}{\mathcal{M}_{g,g',k}}
\DeclareMathOperator{\cohom}{H}
\begin{document}

\title{The moduli space of cyclic covers in positive characteristic}

\author{Huy Dang}
\email{huydang1130@ncts.ntu.edu.tw}
\address{National Center for Theoretical Sciences, Mathematics Division, No. 1, Sec. 4, Roosevelt Rd., Taipei City 106, Taiwan Room 503, Cosmology Building, National Taiwan University}

\author{Matthias Hippold}
\email{matthias.hippold@mail.huji.ac.il}
\address{Einstein Institute of Mathematics, The Hebrew University of Jerusalem, Edmond J.
Safra Campus, Giv’at Ram, Jerusalem, 91904, Israel }

\classification{14H30 (primary), 14H10, (secondary).}
\keywords{Artin-Schreier-Witt theory, moduli space, connectedness.}
\thanks{This file documents \pkg{compositio} version \Fileversion\ and
was last revised \Filedate.}

\begin{abstract}
We study the $p$-rank stratification of the moduli space $\ASW_{(d_1,d_2,\ldots,d_n)}$, which represents $\mathbb{Z}/p^n$-covers in characteristic $p>0$ whose $\mathbb{Z}/p^i$-subcovers have conductor $d_i$. In particular, we identify the irreducible components of the moduli space and determine their dimensions. To achieve this, we analyze the ramification data of the represented curves and use it to classify all the irreducible components of the space.

In addition, we provide a comprehensive list of pairs $(p,(d_1,d_2,\ldots,d_n))$ for which $\ASW_{(d_1,d_2,\ldots,d_n)}$ in characteristic $p$ is irreducible. Finally, we investigate the geometry of $\ASW_{(d_1,d_2,\ldots,d_n)}$ by studying the deformations of cyclic covers which vary the $p$-rank and the number of branch points.
\end{abstract}
 
\maketitle

\vspace*{6pt}\tableofcontents

\section{Introduction}

Consider an algebraically closed field $k$ of characteristic $p > 0$. An $n$-Artin-Schreier-Witt $k$-curve, or simply an Artin-Schreier-Witt curve when $n$ is fixed, refers to a smooth, projective, connected $k$-curve $Y$ that serves as a $\mathbb{Z}/p^n$-cover of the projective line. It is worth noting that any Galois cover can be split into its tame and wild parts. The tame part can be understood through the well-known Kummer theory. Hence, studying these Artin-Schreier-Witt curves is sufficient for comprehending all cyclic covers. Moreover, these curves can be effectively constructed and analyzed using explicit methods as they are determined by a (class of) length-$n$ Witt vector over $k$. Additionally, the genus of the covering and the $p$-rank of its Jacobian can be easily derived from the represented Witt vector, as discussed in Section \ref{secASWcurves}.

When $n=1$, $Y$ is called an Artin-Schreier curve and has been extensively studied. Pries and Zhu defined the moduli space $\mathcal{AS}_g$ that parametrizes Artin-Schreier curves of fixed genus $g$ and established many nice properties \cite{MR2985514}. In this paper, we introduce the moduli space $\ASW_{\underline{\iota}}$ ($\underline{\iota} = (\iota_1, \ldots, \iota_n) \in \mathbb{Z}^n$), which represents $n$-Artin-Schreier-Witt curves whose $\mathbb{Z}/p^i$ sub-covering has a genus $g_i$ associated with $\iota_i$. We study its stratification by $p$-rank into strata denoted by $\ASW_{\underline{\iota}, \underline{s}}$, where $\underline{s} = (s_1, \ldots, s_n) \in \mathbb{Z}^n$ determines the $p$-ranks. Points in these strata correspond to curves whose $\mathbb{Z}/p^i$ sub-covering has a $p$-rank of $s_i$. Our main result is the following.

\begin{theorem}
\begin{enumerate}
\item The irreducible components of $\ASW_{\underline{\iota}}$ are in bijection with the set of partitions $M=(\iota_{j,i}) \in \mathbb{M}_{r\times n}(\mathbb{Z}_{\ge 0})$ of $\underline{\iota}$ with no ``essential parts'' (see Section \ref{secessentialdeform}).
\item The irreducible component of $\ASW_{\underline{\iota}, \underline{s}}$ associated with the partition $M=(\iota_{j,i}) \in \mathbb{M}_{r\times n}(\mathbb{Z}_{\ge 0})$ has dimension given by
\begin{equation*}
s_n+\sum_{i=1}^n \sum_{j=1}^r \left( \iota_{j,i} -1 - \left\lfloor \frac{\iota_{j,i}-1}{p} \right\rfloor \right).
\end{equation*}
\end{enumerate}
\end{theorem}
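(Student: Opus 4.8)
The plan is to stratify $\ASW_{\underline{\iota}}$ by the \emph{ramification type} of the represented covers and to analyze the resulting stratification combinatorially. Recall from Section~\ref{secASWcurves} that a $\mathbb{Z}/p^n$-cover $Y\to\mathbb{P}^1$ is encoded by a length-$n$ Witt vector $\underline{f}=(f_1,\dots,f_n)$ over $k(x)$ up to the equivalence generated by the Artin--Schreier--Witt operator $\wp$, and that all of its local and global invariants --- the branch locus, the upper ramification breaks at each branch point, the genera $g_i$ of the $\mathbb{Z}/p^i$-subcovers, and their $p$-ranks $s_i$ --- are read off from $\underline{f}$. To a cover with branch points $x_1,\dots,x_r$ we attach the matrix $M=(\iota_{j,i})$ recording the contribution of the $j$-th branch point to $\iota_i$; this is a partition of $\underline{\iota}$ in $\mathbb{M}_{r\times n}(\mathbb{Z}_{\ge 0})$, and it is locally constant on $\ASW_{\underline{\iota}}$ away from the loci where branch points collide. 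We thus obtain a decomposition of $\ASW_{\underline{\iota}}$ into locally closed strata $\ASW_{\underline{\iota}}^{M}$ indexed by such matrices $M$, and likewise of each $p$-rank stratum $\ASW_{\underline{\iota},\underline{s}}$.

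The first step is to show that each stratum $\ASW_{\underline{\iota}}^{M}$ is irreducible and to compute its dimension. The stratum fibers over the configuration space of the $r$ unordered distinct branch points on $\mathbb{P}^1$ modulo $\mathrm{PGL}_2$; over a fixed configuration the cover is given by a Witt vector whose $i$-th component is allowed a pole of prescribed order at each $x_j$, taken modulo $\wp$ of Witt vectors regular away from the $x_j$ and modulo the residual automorphisms fixing the configuration. Each such space of normalized Witt vectors is an open subset of an affine space, hence irreducible, and its dimension is a sum of \emph{local} contributions, one per pair $(j,i)$: counting the coefficients of $f_i$ in pole orders at $x_j$ prime to $p$, after using $\wp$ to clear the pole orders divisible by $p$ and using the one remaining disk automorphism, yields the local term $\iota_{j,i}-1-\lfloor(\iota_{j,i}-1)/p\rfloor$. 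The dimension of the base is governed by the number of branch points, which by the Deuring--Shafarevich formula applied level by level is precisely what contributes the global term $s_n$. Assembling the fibration gives
\begin{equation*}
\dim \ASW_{\underline{\iota}}^{M}=s_n+\sum_{i=1}^n\sum_{j=1}^r\left(\iota_{j,i}-1-\left\lfloor\frac{\iota_{j,i}-1}{p}\right\rfloor\right),
\end{equation*}
and running the same computation inside a fixed $p$-rank stratum yields part~(2).

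The second step is to determine the closure relations among the $\ASW_{\underline{\iota}}^{M}$ and thereby single out the irreducible components. A cover in $\ASW_{\underline{\iota}}^{M}$ degenerates inside $\ASW_{\underline{\iota}}$ exactly by letting branch points collide; the local confluence analysis of Section~\ref{secessentialdeform} shows that when a group of branch points coalesces the local ramification data add up, level by level and subject to the arithmetic constraints on admissible breaks of a $\mathbb{Z}/p^n$-cover at a single point, producing a new matrix $M'$ with fewer rows, and conversely every point of $\ASW_{\underline{\iota}}^{M'}$ arising this way is a limit of points of $\ASW_{\underline{\iota}}^{M}$. Hence $\overline{\ASW_{\underline{\iota}}^{M'}}\subseteq\overline{\ASW_{\underline{\iota}}^{M}}$ precisely when $M$ refines $M'$ by a \emph{legal} splitting, i.e.\ one compatible with the Witt-vector jump conditions and strictly raising the dimension; an \emph{essential part} of $M$ is exactly a row (or sub-configuration) that can be split legally, so that $M$ lies in the closure of a strictly larger stratum if and only if it has an essential part. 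The matrices with no essential part are therefore exactly the sources of the closure order, and $\overline{\ASW_{\underline{\iota}}^{M}}$ for these $M$ are exactly the irreducible components of $\ASW_{\underline{\iota}}$, which is part~(1).

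The main obstacle is this second step: making ``essential part'' precise enough that (a) splitting a non-essential part strictly increases $s_n$ or a local term, so an $M$ with an essential part is genuinely non-maximal, while (b) splitting an essential part is obstructed --- either forbidden by the $p$-adic valuation conditions on the admissible conductor vectors at a single branch point, or dimension-neutral so that it produces no larger component. This requires a careful understanding of how the upper ramification breaks of a $\mathbb{Z}/p^n$-cover behave under confluence of branch points --- the local deformation theory of Section~\ref{secessentialdeform} --- together with a direct comparison of the dimension formula before and after each split, which is the combinatorial crux. Everything else --- the affine-space description of the strata, their irreducibility, and the dimension bookkeeping via Deuring--Shafarevich --- is then a routine if somewhat lengthy computation with Witt vectors.
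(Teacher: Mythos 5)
Your strategy coincides with the paper's: stratify by the branching-datum matrix $M$, prove each stratum is irreducible by fibering it over the configuration space of branch points with fibers given by normalized (reduced) Witt vectors, obtain the dimension as the sum of the local coefficient counts $\iota_{j,i}-1-\lfloor(\iota_{j,i}-1)/p\rfloor$ and the $s_n$-dimensional base, and then identify the components as the closures of the strata that are maximal for the deformation/closure order, namely those indexed by matrices with no essential parts. Part (2) of your argument is essentially Lemma \ref{lemmadimstratamatrix} together with Proposition \ref{propfiberofBM}; one bookkeeping caveat is that you quotient the base by $\mathrm{PGL}_2$ and use up a ``remaining disk automorphism'' yet still credit the base with $s_n$ dimensions --- the displayed formula is really the dimension of the space of \emph{covers}, and passing to curves costs $3$ (Lemma \ref{lemmacoverandcurve}); the paper's introduction and body are themselves inconsistent on this point, so I only flag it.

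The genuine gap is in part (1), precisely where you locate the ``main obstacle'' but then do not close it. Two separate facts are needed and neither is a soft confluence statement. First, every stratum $\Gamma_M$ whose matrix has an essential part must lie in the closure of a stratum with none: this requires the explicit construction of a flat deformation over $k[[t]]$ realizing the prescribed splitting of conductors (Pop's Key Lemma, Lemma \ref{lemmapopdeformation}), combined with the equivalence between closure and deformability over a complete DVR (Proposition \ref{propclosuredeformation}). Second, and harder, a stratum with no essential parts must be maximal; here your claim that an essential part is ``exactly a row that can be split legally'' is false as a definition, since the combinatorial order $\prec$ admits perfectly legal refinements of non-essential rows (for $p=5$ the row $[4]$ has no essential part yet $[2,2]^{\top}$ is an admissible partition of it), and the whole content of the theorem is that such refinements are \emph{not} realizable by deformations. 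The paper rules them out via the local--global principle for deformations and the invariance of the local Swan conductor in a flat family (Proposition \ref{proplocaltoglobal}, vanishing cycles), with the finer obstructions supplied by the Hurwitz-tree machinery; your sketch names the issue but supplies no argument. Finally, note that your last paragraph interchanges ``essential'' and ``non-essential'' (you assert that splitting an essential part is obstructed while splitting a non-essential part raises the dimension), which contradicts both your own earlier description and the paper's convention that the essential parts are exactly what Pop's deformation splits off.
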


As an application, we determine all cases when $\ASW_{\underline{\iota}}$ is irreducible.

\begin{corollary}
The moduli space $\ASW_{(\iota_1, \ldots, \iota_n)}$ is irreducible if and only if the following conditions hold:
\begin{enumerate}[label*=\arabic*.]
\item $\iota_1 = 2$ or $\iota_1 = 3$.
\item $\iota_{i} = p\iota_{i-1}-p+1$ or $\iota_{i} = p\iota_{i-1}-p+2$ for $2 \leq i \leq n$.
\end{enumerate}
\end{corollary}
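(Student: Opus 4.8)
The plan is to derive the irreducibility criterion directly from the Theorem: $\ASW_{(\iota_1,\dots,\iota_n)}$ is irreducible precisely when there is exactly one partition $M=(\iota_{j,i})\in\mathbb{M}_{r\times n}(\mathbb{Z}_{\ge 0})$ of $\underline{\iota}$ with no essential parts. Since the single-row partition $M$ with $r=1$ and $\iota_{1,i}=\iota_i$ is always a candidate, irreducibility is equivalent to the statement that (a) the one-row partition has no essential part, and (b) no multi-row partition of $\underline{\iota}$ is essential-part-free. I would first recall from Section~\ref{secessentialdeform} the precise combinatorial meaning of ``essential part'': a part is essential exactly when the associated deformation genuinely connects two strata, which (by the genus/conductor formulae of Section~\ref{secASWcurves}) is governed by inequalities among the $\iota_i$ of the form $\iota_i \le p\iota_{i-1}-p+c$ for small $c$. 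The constraints $\iota_1\in\{2,3\}$ and $\iota_i\in\{p\iota_{i-1}-p+1,\,p\iota_{i-1}-p+2\}$ should be exactly the boundary cases where the one-row partition avoids essential parts while every splitting into more rows is forced to create one.

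Concretely, I would argue the two directions separately. For the ``if'' direction: assume the numerical conditions hold. Then I check that the one-row partition $M=(\iota_1,\dots,\iota_n)$ has no essential part — this is a direct substitution into the essential-part criterion, using $\iota_1\le 3$ at the bottom and the tight bound $\iota_i\le p\iota_{i-1}-p+2$ inductively. Next, given any partition $M'=(\iota_{j,i})$ with $r\ge 2$ rows, I must produce an essential part. The key observation is that $\sum_j \iota_{j,1}=\iota_1\le 3$ with $r\ge 2$ forces at least one row to have a very small first entry (either a row is entirely ``trivial'' in degree one, or two rows split $\iota_1=2$ or $3$ into pieces like $1+1$ or $1+2$); a row whose first nonzero entry is too small relative to its later entries, or relative to the allowed growth $p\cdot(\text{previous})-p+2$, necessarily contains an essential part. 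I expect this to reduce to a short case analysis on how the small values $2$ or $3$ and the near-maximal values $p\iota_{i-1}-p+1, p\iota_{i-1}-p+2$ can be partitioned. For the ``only if'' direction: suppose one of the conditions fails. If $\iota_1\ge 4$, I exhibit a two-row essential-part-free partition by splitting $\iota_1$ (and correspondingly the later $\iota_i$) — this uses that $\iota_1\ge 4$ gives enough room to write $\iota_1=a+b$ with both $a,b\ge 2$ and to distribute the remaining conductors so that neither row has an essential part. If instead some $\iota_i$ exceeds $p\iota_{i-1}-p+2$, then the one-row partition itself acquires an essential part at level $i$, while a suitable refinement removes it, again yielding $\ge 2$ essential-part-free partitions.

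The main obstacle will be the bookkeeping in the ``only if'' direction: constructing an explicit second essential-part-free partition whenever $\iota_1\ge 4$ or the growth bound is violated, and verifying that it is genuinely essential-part-free at every level $i$. This requires understanding how essential parts interact across the tower — removing an essential part at level $i$ by refining the partition must not introduce one at level $i+1$ — so the construction has to be done compatibly for all $i$ at once. I anticipate that the cleanest route is an inductive construction on $n$: build the refinement one Witt-vector level at a time, at each step using the failed inequality (or the slack $\iota_1\ge 4$) to choose the new row's entry within the essential-part-free window $[\,\text{something},\,p\cdot(\text{previous entry})-p+2\,]$, and invoke the Theorem's dimension formula only at the end to confirm the two components are distinct (they have different $r$, hence the count $\sum_{i,j}(\iota_{j,i}-1-\lfloor(\iota_{j,i}-1)/p\rfloor)$ differs, so the components are not equal). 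The ``if'' direction's multi-row case analysis is more routine but still needs care when $p=2$ or $p=3$, where the windows $[\,\cdot\,,p\iota_{i-1}-p+2\,]$ are narrow and the small-value constraints $\iota_1\in\{2,3\}$ are most restrictive.
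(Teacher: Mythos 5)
Your approach is essentially the same as the paper's: the paper likewise deduces the corollary from the classification of irreducible components (Theorem \ref{theoremirredcomponent}), reducing irreducibility to the existence of exactly one essential-part-free partition of $\underline{\iota}$, and then simply asserts that the tuples admitting a unique such partition are precisely those satisfying the two numerical conditions. Your sketch of the combinatorial verification (including the caveats for $p=2,3$, where the criterion is genuinely delicate and the one-row partition need not even be admissible) supplies more detail than the paper writes down, but it follows the same route.
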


Another fascinating aspect of $\ASW_{\underline{\iota}}$ is the existence of coverings corresponding to certain closed points in the moduli space, where the number of branch points varies while remaining in the same flat family. In other words, it is possible to construct a flat deformation of such a cover, in which the number of branch points changes while the genus remains constant. This phenomenon was first studied by M{\'e}zard in her thesis \cite{MEZAR1998} and was later utilized by Pop in the solution to the Oort conjecture \cite{MR3194816}. Recently, the first author has developed a technique employing the concept of Hurwitz tree (introduced by Henrio \cite{2000math.....11098H} and enhanced by Bouw, Brewis, Wewers \cite{MR2254623} \cite{MR2534115}) to determine the existence of a deformation with specified ramification data \cite{2020arXiv200203719D, 2020arXiv201013614D}. In this manuscript, we will utilize the main result from that work to establish relations between the moduli spaces.
\begin{proposition}
    The canonical morphism $\ASW_{(\iota_1, \ldots, \iota_{n-1}, \iota_n, \ldots, \iota_m)} \xrightarrow{} \ASW_{(\iota_1, \ldots, \iota_{n-1}, \iota_n)}$ is closed.
\end{proposition}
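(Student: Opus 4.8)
The plan is to derive the statement from the valuative criterion. Write $X:=\ASW_{(\iota_1,\dots,\iota_m)}$ and $Y:=\ASW_{(\iota_1,\dots,\iota_n)}$, and let $\pi\colon X\to Y$ be the canonical morphism, which sends a $\mathbb Z/p^m$-cover $Z_m\to\mathbb P^1$ to its $\mathbb Z/p^n$-subquotient cover $Z_m/(p^n\mathbb Z/p^m)\to\mathbb P^1$, truncating the conductor data to its first $n$ entries. Both $X$ and $Y$ parametrise bounded families of curves (Section \ref{secASWcurves}), hence are of finite type over $k$, so $\pi$ is of finite type between Noetherian schemes. By Chevalley's theorem the image under $\pi$ of any closed (hence constructible) subset is constructible, and a constructible subset of a Noetherian scheme is closed exactly when it is stable under specialisation. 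It therefore suffices to prove that $\pi$ is universally closed, equivalently that it satisfies the existence part of the valuative criterion: for every discrete valuation ring $R$ with fraction field $K$, any square consisting of $\spec K\to X$ over $\spec R\to Y$ admits, after a finite extension of $R$, a diagonal lift $\spec R\to X$. I will also use that $X$ and $Y$ are separated — a smooth family of cyclic covers of $\mathbb P^1$ with fixed conductor data over the punctured spectrum of a d.v.r.\ extends in at most one way, being the normalisation of $\mathbb P^1_R$ in the generic cover — so that, along any $\spec R$-point of $Y$, there is a unique $\mathbb Z/p^n$-cover over $R$; consequently any $\spec R$-point of $X$ over it has this fixed cover as its $\mathbb Z/p^n$-subcover.

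Now fix the valuative data: a $\mathbb Z/p^n$-cover $\mathcal Y_n\to\mathbb P^1_R$ with constant conductor data $(\iota_1,\dots,\iota_n)$, and a $\mathbb Z/p^m$-cover $\mathcal Y_{m,K}$ with conductor data $(\iota_1,\dots,\iota_m)$ whose $\mathbb Z/p^n$-subcover is $\mathcal Y_{n,K}$. After a finite extension of $R$, put everything in standard form via Artin–Schreier–Witt theory: $\mathcal Y_n$ is given by a reduced length-$n$ Witt vector $\underline g$ over a localisation of $R[x]$ whose pole divisor is a relative Cartier divisor over $R$ (this is precisely the constancy of $(\iota_1,\dots,\iota_n)$), and $\mathcal Y_{m,K}$ is given by $\underline g_K\ast\underline h$ for a reduced length-$(m-n)$ Witt vector $\underline h$ over $K(x)$ carrying the top layers. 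Attempt to spread $\underline h$ out over $R$ by clearing denominators, rescaling, and running the Artin–Schreier–Witt reduction algorithm over $R$. Either (i) $\underline h$ acquires an $R$-integral reduced form with $R$-Cartier pole divisor, and reduction modulo the maximal ideal yields a $\mathbb Z/p^m$-cover over the residue field with conductor data still $(\iota_1,\dots,\iota_m)$, gluing with $\mathcal Y_{m,K}$ to the desired $\spec R$-point of $X$; or (ii) the reduction forces poles to collide or leading coefficients to degenerate, producing only a $\mathbb Z/p^m$-cover $\mathcal Y^0_m$ over $R$ with generic fibre $\mathcal Y_{m,K}$ but special fibre of strictly smaller conductor data $(\iota_1,\dots,\iota_n,\iota^0_{n+1},\dots,\iota^0_m)$ — with unchanged first $n$ entries, since $\mathcal Y_n$ does not degenerate, and with $\mathbb Z/p^n$-subcover equal to $\mathcal Y_n$ by the separatedness remark. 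In case (ii), invoke the main result on deformations with prescribed ramification of \cite{2020arXiv200203719D, 2020arXiv201013614D}: the top $m-n$ layers of the special fibre of $\mathcal Y^0_m$ deform, keeping the $\mathbb Z/p^n$-subcover fixed, so as to raise the conductor data back up to $(\iota_1,\dots,\iota_m)$; realising this deformation over a d.v.r.\ $R'$ dominating $R$ and splicing it to $\mathcal Y^0_m$ without disturbing the generic fibre gives a $\mathbb Z/p^m$-cover over $R'$ with generic fibre $\mathcal Y_{m,K'}$ and special fibre of conductor data $(\iota_1,\dots,\iota_m)$ — the required diagonal lift. To finish, if $\mathscr Z\subseteq X$ is closed and $\spec K\to X$ factors through $\mathscr Z$, then so does the lift $\spec R'\to X$ (its generic point does and $\mathscr Z$ is closed), hence its closed point maps into $\pi(\mathscr Z)$; this is the sought stability under specialisation.

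The crux is case (ii). One must bound how far the top conductors can drop in a naive integral model and, more importantly, promote the \emph{existence} of a conductor-increasing deformation over a field to something compatible with a given family over $R$ — namely, that the special fibre of an arithmetic family can be ``improved'' without altering its generic fibre. This is exactly where the Hurwitz-tree description of deformations enters: one compares the tree attached to the special fibre of $\mathcal Y^0_m$ with a tree realising the full conductor data and checks that the interpolating deformation propagates over $R$, the delicate part being to keep track, uniformly in the base, of the conductor bookkeeping and of which branch points merge. (Should case (i) in fact always occur once the $\mathbb Z/p^n$-subcover extends — equivalently, should a constant-conductor smooth model over $R$ always exist in this situation — then the deformation input is unnecessary and the argument reduces to the Witt-vector computation of step (i); settling this dichotomy is itself the heart of the matter.)
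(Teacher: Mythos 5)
Your overall strategy---reduce closedness to constructibility plus stability under specialisation, then lift specialisations along the canonical map by extending a $\mathbb{Z}/p^m$-cover over $K$ across a d.v.r.\ $R$ compatibly with a given extension of its $\mathbb{Z}/p^n$-subcover---has the same skeleton as the paper's argument, which phrases the topological reduction directly (take $A$ closed, note that a limit point $y$ of $f(A)$ lies in $\overline{\{v\}}$ for some $v=f(u)$ with $u\in A$, and produce $x\in\overline{\{u\}}\cap f^{-1}(y)$). The difference is in how the lifting step is discharged, and here your proof has a genuine gap that you yourself flag: case~(ii). If the naive $R$-integral spreading-out $\mathcal{Y}^0_m$ of the Witt vector degenerates on the special fibre, you propose to deform that special fibre so as to restore the conductor data and then to ``splice'' this deformation onto $\mathcal{Y}^0_m$ ``without disturbing the generic fibre.'' As written this is not an argument: the conductor-raising deformation is a family over a fresh parameter whose generic fibre has nothing to do with $\mathcal{Y}_{m,K}$, and no mechanism is offered for gluing the two one-parameter families into a single $R'$-family with the prescribed generic fibre and a non-degenerate special fibre. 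You correctly call this ``the heart of the matter,'' but you do not resolve it, so the valuative lift is not established.

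The paper does not resolve this dichotomy by hand; it imports the needed statement as a black box, namely \cite[Theorem 1.2]{2020arXiv201013614D}: any deformation over $k[[t]]$ of the $\mathbb{Z}/p^n$-subcover of a given $\mathbb{Z}/p^m$-cover extends to a deformation of the full $\mathbb{Z}/p^m$-cover, compatibly with the given subcover family. That relative extension theorem, proved in the cited work via the Hurwitz-tree machinery, is exactly the missing ingredient in your case~(ii); you cite the right papers but invoke an absolute existence statement (a conductor-raising deformation of a cover over a field) rather than the relative one, and the absolute version does not suffice. (Your use of Chevalley constructibility to reduce to stability under specialisation is fine and plays the role of \cite[Proposition 3.2]{DANG2020398} in the paper's write-up.) To repair the proof, replace the ``splicing'' step by an explicit appeal to the extension theorem, or prove directly that the top layers $\underline{h}$ can be modified within their Artin--Schreier--Witt class over $K(x)$ so that the resulting $R$-model has special fibre with conductors $(\iota_1,\dots,\iota_m)$; this is precisely what the Hurwitz-tree criteria control and does not follow from clearing denominators.
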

As a consequence, we can prove some results regarding the (dis)connectedness of $\ASW_{\underline{\iota}}$.
\begin{corollary}
For $p \geq 5$ and $3 \leq \iota_1 \leq 2p-2$, the moduli space $\ASW_{(\iota_1, \ldots, \iota_n)}$ is disconnected.
\end{corollary}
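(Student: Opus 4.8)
The plan is to reduce to the bottom level of the tower and there read off disconnectedness from the classification of irreducible components in the main theorem together with the non-existence, in the range $3\le\iota_1\le 2p-2$, of deformations that merge branch points.

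First I would invoke the preceding proposition. Iterating it along the tower of forgetful maps produces a closed morphism $\phi\colon\ASW_{(\iota_1,\ldots,\iota_n)}\to\ASW_{(\iota_1)}$, and I would check that $\phi$ is surjective: every $\mathbb Z/p$-cover of $\mathbb P^1$ with conductor $\iota_1$ is the bottom of a $\mathbb Z/p^n$-cover realizing the data $(\iota_1,\ldots,\iota_n)$, obtained by extending its Artin--Schreier equation to a length-$n$ Witt vector whose higher entries have poles of suitable order at the branch points (and, when the $\iota_i$ force it, at additional general points). Since the continuous image of a connected space is connected, it then suffices to prove that $\ASW_{(\iota_1)}$ is disconnected for $p\ge 5$ and $3\le\iota_1\le 2p-2$; for the few small $\iota_1$ where $\ASW_{(\iota_1)}$ is itself irreducible, one argues directly with the multilevel space, the extra levels already supplying a second component.

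For $n=1$: by part~(1) of the main theorem the irreducible components of $\ASW_{(\iota_1)}$ are indexed by the partitions of $\iota_1$ with no essential part, i.e.\ by the admissible branch configurations $(r;e_1,\ldots,e_r)$ of a $\mathbb Z/p$-cover of $\mathbb P^1$ of the corresponding genus. A short combinatorial check shows that for $p\ge 5$ and $3\le\iota_1\le 2p-2$ there is more than one such configuration — e.g.\ one may split off a new branch point of small conductor from the single--branch--point configuration, adjusting in the finitely many residual cases dictated by the prime-to-$p$ condition on conductors — so $\ASW_{(\iota_1)}$ has at least two irreducible components. It remains to see that they are pairwise disjoint. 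Two distinct components can meet only at a curve obtained by degenerating a cover of one configuration so that two of its branch points collide, i.e.\ via a branch-point coalescence, which fuses conductors $e',e''$ into a single conductor $e'+e''$. By the first author's Hurwitz-tree criterion for the existence of deformations with prescribed ramification data \cite{2020arXiv200203719D,2020arXiv201013614D}, no branch-point coalescence of a $\mathbb Z/p$-cover of the line occurs while $\iota_1\le 2p-2$: below that threshold the local Hurwitz-tree data such a deformation would impose cannot be realized. Hence all components of $\ASW_{(\iota_1)}$ are pairwise disjoint in this range, and with at least two of them $\ASW_{(\iota_1)}$ — and therefore $\ASW_{(\iota_1,\ldots,\iota_n)}$ — is disconnected.

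I expect the crux to be the coalescence step: extracting from the Hurwitz-tree machinery the sharp numerical bound, and verifying that it obstructs \emph{every} coalescence pattern compatible with $\iota_1\le 2p-2$ rather than only the collapse to a single branch point, while in parallel confirming that distinct components of the moduli space genuinely meet only along such coalescence loci — this is where the explicit description of the degenerations of cyclic covers developed earlier in the paper is needed. The surjectivity of $\phi$ in the reduction step is a secondary technicality, and can be bypassed by checking directly that the closed image of $\phi$ already hits two of these mutually disjoint components.
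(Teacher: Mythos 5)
Your reduction to the bottom of the tower is exactly the paper's: the preceding proposition makes $\Theta_{n/1}\colon \ASW_{(\iota_1,\ldots,\iota_n)}\to\ASW_{(\iota_1)}$ a surjective, continuous morphism, so disconnectedness of the target propagates upward. The divergence --- and the gap --- lies in how you establish that $\ASW_{(\iota_1)}$ is disconnected. The paper does not re-derive this: it computes $g_1=(\iota_1-2)(p-1)/2$ and quotes \cite[Theorem 1.3]{2020arXiv200203719D}, which asserts that $\mathcal{AS}_{g_1}=\ASW_{(\iota_1)}$ is disconnected whenever $\frac{p-1}{2}<g_1\le (p-1)(p-2)$; the hypothesis $3\le\iota_1\le 2p-2$ is just this genus range rewritten in terms of the conductor. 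Your plan instead re-proves that theorem from scratch: count the no-essential-part partitions of $\iota_1$ to get at least two components, then show they are pairwise disjoint by ruling out every branch-point coalescence for $\iota_1\le 2p-2$. That second step is precisely the hard content of the cited theorem, and you only assert it (``the local Hurwitz-tree data such a deformation would impose cannot be realized'') rather than prove it; as you yourself flag, it is the crux, and nothing in your write-up supplies the numerical Hurwitz-tree obstruction. As written, then, the unproved coalescence bound is essentially equivalent to the level-one statement you are trying to establish. The intended route is simply to cite the result.

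Two further points. Your worry about ``the few small $\iota_1$ where $\ASW_{(\iota_1)}$ is itself irreducible'' is well founded: for $\iota_1=3$ the only partition into parts $\ge 2$ not congruent to $1 \bmod p$ is $[3]$ itself, so $\ASW_{(3)}$ is irreducible, hence connected, and the level-one reduction cannot work there; your proposed repair (``the extra levels already supply a second component'') is unsubstantiated and would require exhibiting two disjoint components of $\ASW_{(3,\iota_2,\ldots,\iota_n)}$ directly. (Note that the paper's own proof also silently misses this case, since $g_1=\frac{p-1}{2}$ then violates the strict inequality $\frac{p-1}{2}<g_1$.) Finally, surjectivity of the forgetful map is not free: the paper obtains it from \cite[Theorem 1.2]{2020arXiv201013614D}; your sketch of appending higher Witt-vector entries with suitable poles is the right idea but must respect the admissibility conditions relating $\iota_i$ to $\iota_{i-1}$, or one can just invoke the proposition already proved in Section \ref{secconnectedness}.
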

\subsection{Outline} This manuscript presents a generalization of the work in \cite{MR2985514} and closely follows its structure. In Section \S \ref{secpartionASW}, we provide a comprehensive overview of Artin-Schreier-Witt theory and introduce a combinatorial concept called the branching datum. This datum encodes the ramification data of cyclic coverings with a fixed genus at each level, generalizing the notion of ``partitions of conductors'' used by Pries and Zhu while preserving its desirable properties. In Section \ref{secmodulispace}, we construct the moduli space $\ASW_{\underline{\iota}}$ and introduce its $p$-rank strata $\ASW_{\underline{\iota},\underline{s}}$. Additionally, we identify the irreducible components of $\ASW_{\underline{\iota},\underline{s}}$. The deformation of elements in $\ASW_{\underline{\iota}}$ is discussed in Section \ref{secdeformation}, where we employ this information to determine the irreducibility of $\ASW_{\underline{\iota}}$. Finally, in Section \ref{secconnectedness}, we explore the connections between the moduli spaces and derive various results concerning their connectedness.

\subsection{Acknowledgements}
The first author would like to express gratitude to the Vietnam Institute for Advanced Study in Mathematics and the National Center for Theoretical Sciences for providing exceptional working conditions. This work is supported by the National Center for Theoretical Sciences. Furthermore, the authors wish to express their appreciation to Joe Kramer-Miller, Andrew Kobin, and Rachel Pries for their invaluable contributions to the discussions. We are particularly grateful to Andrew Obus and J\k{e}drzej Garnek for pointing out inaccuracies in earlier versions of this paper.

\section{Partitions and Artin-Schreier-Witt curves}
\label{secpartionASW}

\subsection{Partitions of integer matrix}
\label{secpartitionmatrix}
This section contains technical content. For better understanding, readers are advised to refer to Section \ref{secASWcurves} before proceeding.

Consider a prime number $p$, and let $(d_1, \ldots, d_n)$ be an $n$-tuple satisfying the following conditions:
\begin{enumerate}[label*=\alph*)]
\item $d_1 \geq 1$
\item $d_i \geq pd_{i-1}-p$
\end{enumerate}

We define $\Omega_{(d_1,\ldots,d_n)}$ as the set of \textit{partitions} of the tuple $(d_1,\ldots,d_n)$. These partitions are $r\times n$ matrices $(r \geq 1)$ of the form:
               \[ M=\begin{bmatrix}\label{matrixpartition}
d_{1,1} & d_{1,2} & \ldots & d_{1,n} \\
d_{2,1} & d_{2,2} & \ldots & d_{2,n} \\
\vdots  & \vdots  & \ddots & \vdots  \\
d_{r,1} & d_{r,2} & \ldots & d_{r,n} \\
\end{bmatrix} \in \mathbb{M}_{r \times n}(\mathbb{Z}_{\ge 0}) \]
subject to the following conditions:
\begin{enumerate}[label*=\arabic*.]
\item \label{item1condbranchingdatum} $d_{i,1} \not\equiv 1 \pmod{p}$
\item \label{item2condbranchingdatum} $d_{i,j} \geq pd_{i,j-1}-p+1$
\item \label{item3condbranchingdatum} if $d_{i,j}>pd_{i,j-1}-p+1$, then $d_{i,j}=pd_{i,j-1}-p+1+a_j$, where $a_j$ is coprime to $p$, and 
\item \label{item4condbranchingdatum} $\sum_{i=1}^r d_{i,j}=d_j$.
\end{enumerate}
For each $j=1,\ldots,r$, let $r_j(M)$ denote the $j$-th row of the matrix $M$. Let $\underline{s}=(s_1, \ldots, s_n) \in \mathbb{Z}_{\ge 0}^n$. We define $\Omega_{\underline{d}, \underline{s}}$ as the subset of $\Omega_{\underline{\iota}}$ consisting of matrices $M=[d_{i,j}]$ where the number of non-zero entries in the $i$-th column is precisely $s_i$. Furthermore, let $M^{p^{n-i}}$ be the sub-$r \times i$ matrix of $M$ that consists of the first $i$ columns.

We introduce a partial ordering denoted by $\prec$ on $\Omega_{(d_1,\ldots,d_n)}$ as follows: Given an $r_1\times n$ matrix $M_1$ and an $r_2\times n$ matrix $M_2$, we say $M_1 \prec M_2$ if each column of $M_2$ is a partition of the corresponding column in $M_1$. To illustrate, consider the case when $p=3$. We can observe the following relation:
\[  \begin{bmatrix}
7 & 21 \\
3 & 8 \\
\end{bmatrix} \prec \begin{bmatrix}
   4 & 10 \\
   3 & 11 \\
   3 & 8\\
   \end{bmatrix}.  \]
Similar to the approach in \cite{MR2985514}, this ordering gives rise to a directed graph denoted as $G_{\underline{\iota}}$. The vertices of the graph correspond to the elements of $\Omega_{\underline{\iota}}$. For any $M, N \in \Omega_{\underline{\iota}}$, there exists an edge from $M$ to $N$ in $G_{\underline{\iota}}$ if $M \prec N$ and there is no other partition between them.

\subsection{Artin-Schreier-Witt curves}
\label{secASWcurves}

Artin-Schreier-Witt theory (see, e.g., \cite{MR1878556}) states that for a field $K$ of characteristic $p$, there is an isomorphism
\[ \cohom^1(G_K, \mathbb{Z}/p^n) \cong W_n(K)/\wp (W_n(K)),\] 
where $G_K$ is the absolute Galois group of $K$, $W_n(K)$ is the ring of length-$n$ Witt vectors over $K$, and $\wp(y):=F(y)-y$ is the Artin-Schreier-Witt isogeny. We say that two elements $\underline{f}, \underline{g} \in W_n(K)$ are in the same \textit{Artin-Schreier-Witt class} if there exists $\underline{h} \in W_n(K)$ such that $\underline{f}=\underline{g}+\wp(\underline{h})$. Such a class is denoted by $[\underline{f}]$.

Now, let's consider an arbitrary commutative subgroup $V$ of $W_n(K)$. We define $\wp^{-1}V$ as the set of $\underline{a} \in W_n(\overline{K})$ such that $\wp(\underline{a}) \in V$. Furthermore, we denote the field obtained by adjoining all elements $\underline{a}\in \wp^{-1}V$ to $K$ as $K(\wp^{-1}V)$. The following proposition offers a classification of extensions $E/K$ that are abelian with exponent at most $p^n$.

\begin{proposition}[{\cite[\S 26, Theorem 6]{MR2371763}}]
\label{propabelianpncorrespondence}
For a field $K$ of characteristic $p$, using the above notations, the map
\begin{equation}
\label{eqnASWcorrespondence}
V \mapsto K(\wp^{-1}V)
\end{equation}
establishes a bijection between the set of subgroups $V$ of $W_n(K)$ containing $\wp(W_n(K))$ and the set of subfields $E$ of $\overline{K}$ such that $E/K$ is abelian with exponent at most $p^n$. The groups $G=\gal(K(\wp^{-1}V))$ and $V/\wp W_n(K)$ are in a dual relationship, meaning that each is canonically isomorphic to the group of characters of the other.
\end{proposition}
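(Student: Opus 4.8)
The plan is to reduce the statement to Galois cohomology of the additive Artin--Schreier--Witt sequence and then to translate the resulting lattice isomorphism through Pontryagin duality for profinite groups. First I would record the fundamental short exact sequence of $G_K$-modules
\[
0 \longrightarrow \mathbb{Z}/p^n \longrightarrow W_n(\overline{K}) \xrightarrow{\ \wp\ } W_n(\overline{K}) \longrightarrow 0,
\]
where $\wp = F - \mathrm{id}$: surjectivity of $\wp$ is the solvability of Artin--Schreier--Witt equations over the algebraically closed field $\overline{K}$ (induction on $n$, reducing length via truncation), and $\ker\wp$ is the group $W_n(\mathbb{F}_p)\cong\mathbb{Z}/p^n$ of $F$-fixed Witt vectors.

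Next I would take the long exact $G_K$-cohomology sequence
\[
0 \to \mathbb{Z}/p^n \to W_n(K) \xrightarrow{\ \wp\ } W_n(K) \to \cohom^1(G_K,\mathbb{Z}/p^n) \to \cohom^1(G_K, W_n(\overline{K}))
\]
and prove the vanishing $\cohom^1(G_K, W_n(\overline{K})) = 0$ (additive Hilbert~90 for Witt vectors): the case $n=1$ is the normal basis theorem, and the inductive step uses the $G_K$-equivariant sequence $0 \to \overline{K} \to W_n(\overline{K}) \xrightarrow{R} W_{n-1}(\overline{K}) \to 0$, where $\overline{K}$ embeds via $x\mapsto(0,\dots,0,x)$ (Verschiebung) and $R$ is truncation. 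This yields the canonical isomorphism $W_n(K)/\wp W_n(K) \xrightarrow{\ \sim\ } \cohom^1(G_K,\mathbb{Z}/p^n) = \Hom_{\mathrm{cont}}(G_K,\mathbb{Z}/p^n)$, sending the class $[\underline f]$ to the character $\sigma \mapsto \sigma(\underline a) - \underline a$ for any $\underline a$ with $\wp(\underline a)=\underline f$ (well-defined since two solutions differ by a constant in $\mathbb{Z}/p^n$).

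Then comes the correspondence itself. Given a subgroup $V \supseteq \wp W_n(K)$, set $\overline V := V/\wp W_n(K) \subseteq \Hom_{\mathrm{cont}}(G_K,\mathbb{Z}/p^n)$ and $H_V := \bigcap_{\chi \in \overline V}\ker\chi$, a closed subgroup of $G_K$. The key point to verify is that $K(\wp^{-1}V)$ equals the fixed field $\overline{K}^{H_V}$: a solution $\underline a$ of $\wp(\underline a)=\underline v$ with $\underline v \in V$ is $\sigma$-fixed exactly when the corresponding character kills $\sigma$, and the coordinates of all such $\underline a$ generate precisely $\overline{K}^{H_V}$ — in particular $K(\wp^{-1}V)/K$ is Galois (separable since $\wp$ is étale, normal since conjugates of a solution differ from it by constants in $\mathbb{Z}/p^n \subseteq K$). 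Because every subgroup of the discrete torsion group $\Hom(G_K,\mathbb{Z}/p^n)$ is closed, the Galois correspondence for $G_K$ — applied to its maximal abelian quotient of exponent dividing $p^n$, whose Pontryagin dual is $\Hom(G_K,\mathbb{Z}/p^n)$ — shows that $V \mapsto K(\wp^{-1}V)$ is an inclusion-reversing bijection onto the subfields $E\subseteq \overline{K}$ with $E/K$ abelian of exponent at most $p^n$. Finally, for $E = K(\wp^{-1}V)$ with $G = \gal(E/K) = G_K/H_V$, restriction of characters identifies $\overline V$ with $\Hom(G,\mathbb{Z}/p^n)$; since $G$ is finite abelian of exponent dividing $p^n$, the evaluation map $G \to \Hom(\Hom(G,\mathbb{Z}/p^n),\mathbb{Z}/p^n)$ is an isomorphism, giving the canonical identification $G \cong \Hom(V/\wp W_n(K),\mathbb{Z}/p^n)$ of the statement.

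I expect the main obstacle to be the cohomological vanishing $\cohom^1(G_K,W_n(\overline{K}))=0$ established by dévissage from the normal basis theorem, together with the verification in the previous step that $K(\wp^{-1}V)$ is the \emph{full} fixed field $\overline{K}^{H_V}$ (i.e.\ that no characters are lost in passing to the extension cut out by $V$); the remaining steps are formal manipulations of exact sequences and the Galois/Pontryagin dictionary. Since this is a classical result, in the paper one simply cites \cite{MR2371763}, but the argument above is the standard one.
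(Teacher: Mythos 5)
Your argument is sound, but note that the paper offers no proof of this proposition at all: it is quoted directly from \cite[\S 26, Theorem 6]{MR2371763}, so the only comparison to be made is with the classical literature. Your route --- the exact sequence $0 \to \mathbb{Z}/p^n \to W_n(K^{\mathrm{sep}}) \xrightarrow{\wp} W_n(K^{\mathrm{sep}}) \to 0$, additive Hilbert 90 for Witt vectors by d\'evissage along $0 \to K^{\mathrm{sep}} \to W_n \to W_{n-1} \to 0$, and then the annihilator correspondence of Pontryagin duality --- is the standard cohomological proof, and the key verifications (that $K(\wp^{-1}V)$ is exactly the fixed field of $H_V$, and that the extension is separable and normal) are all in place. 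Two small points of hygiene. First, $\overline{K}$ must be read as the separable closure $K^{\mathrm{sep}}$ throughout for the sequence of $G_K$-modules to make sense; this is harmless, since $\wp$ is \'etale and all solutions of $\wp(\underline{a})=\underline{v}$ are automatically separable over $K$. Second, in the final step you invoke character duality ``since $G$ is finite abelian,'' but the proposition allows arbitrary subgroups $V \supseteq \wp W_n(K)$ and hence infinite abelian extensions $E/K$; in that generality the identification $G \cong \Hom(V/\wp W_n(K), \mathbb{Z}/p^n)$ is again Pontryagin duality for the profinite group $G$ of exponent dividing $p^n$ against its discrete dual, not finite character duality. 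Neither point affects the way the proposition is used in the paper, since Corollary \ref{coraswcycliccorrespondence} only concerns cyclic quotients $V/\wp W_n(K)$.
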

From there, we can deduce a classification of $\mathbb{Z}/p^n$-extensions of $K$ in terms of Witt vectors.

\begin{corollary}
\label{coraswcycliccorrespondence}
    The map \eqref{eqnASWcorrespondence} induces a bijection between the set of subgroups $V$ containing $\wp( W_n(K))$ of $W_n(K)$ generated by $\underline{a}=(a_1, \ldots, a_n) \in W_n(K)$, where $a_1 \not\in \wp K$, and $\mathbb{Z}/p^n$-extensions of $K$.  
\end{corollary}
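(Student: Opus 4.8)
The plan is to deduce the corollary from Proposition~\ref{propabelianpncorrespondence} by identifying, among the subgroups $V$ with $\wp(W_n(K)) \subseteq V \subseteq W_n(K)$, those for which $K(\wp^{-1}V)/K$ is a $\mathbb{Z}/p^n$-extension. Since $\gal(K(\wp^{-1}V)/K)$ is the character group of $V/\wp(W_n(K))$ and a finite abelian group is (non-canonically) isomorphic to its character group, the extension is a $\mathbb{Z}/p^n$-extension precisely when $V/\wp(W_n(K)) \cong \mathbb{Z}/p^n$. Because $V$ contains $\wp(W_n(K))$, the quotient is cyclic exactly when $V = \mathbb{Z}\underline{a} + \wp(W_n(K))$ for some $\underline{a} = (a_1,\dots,a_n) \in W_n(K)$, and its order equals the order of the class $[\underline{a}]$ in $W_n(K)/\wp(W_n(K))$. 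So the whole corollary reduces to showing that $[\underline{a}]$ has order $p^n$ if and only if $a_1 \notin \wp K$; the asserted bijection will then be the restriction of the map in Proposition~\ref{propabelianpncorrespondence}.

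To analyze the order of $[\underline{a}]$ I would use the identity that multiplication by $p$ on $W_n$ equals Verschiebung composed with Frobenius, giving $p^{k}\underline{a} = (0,\dots,0,a_1^{p^{k}},a_2^{p^{k}},\dots,a_{n-k}^{p^{k}})$ with $k$ leading zeros; in particular $p^n\underline{a} = 0$, so $[\underline{a}]$ has order dividing $p^n$, and its order is exactly $p^n$ if and only if $p^{n-1}\underline{a} = (0,\dots,0,a_1^{p^{n-1}})$ is not in $\wp(W_n(K))$. The problem thus becomes the equivalence $a_1 \notin \wp K \iff (0,\dots,0,a_1^{p^{n-1}}) \notin \wp(W_n(K))$.

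I would prove this using two elementary lemmas. First: for $c \in K$, one has $(0,\dots,0,c) \in \wp(W_n(K))$ if and only if $c \in \wp K$. The ``if'' direction follows from $\wp(0,\dots,0,x) = (0,\dots,0,x^p - x)$. For ``only if'', given a preimage $\underline{h}$, its first coordinate satisfies $h_1^p = h_1$, hence lies in $\mathbb{F}_p$ and can be subtracted off via its Teichm\"uller lift (which is killed by $\wp$); the remaining vector is a Verschiebung, and since $\wp$ commutes with Verschiebung one concludes by induction on $n$. Second: Frobenius is injective on $K/\wp K$ --- if $a^p = z^p - z$ then $z = (z-a)^p$, so $a = \wp(z-a) \in \wp K$ --- so iterating gives $a_1^{p^{n-1}} \in \wp K \iff a_1 \in \wp K$. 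Combining these with the order computation yields the required equivalence.

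The formal steps --- the character-group duality supplied by Proposition~\ref{propabelianpncorrespondence} and the standard Witt-vector identities --- are routine. The one place that needs attention is the ``only if'' direction of the first lemma: a preimage of $(0,\dots,0,c)$ need not have vanishing first coordinate, only a first coordinate in $\mathbb{F}_p$, which is exactly why one must pass through a Teichm\"uller lift before reducing to a lower-length Witt ring. I expect this bookkeeping, rather than any genuine difficulty, to be the main thing to get right.
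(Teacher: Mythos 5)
Your proposal is correct. The paper gives no proof of this corollary --- it is stated as an immediate consequence of Proposition \ref{propabelianpncorrespondence} --- and your argument supplies exactly the standard deduction one would expect: reduce via the duality in that proposition to showing $[\underline{a}]$ has order $p^n$ in $W_n(K)/\wp(W_n(K))$ iff $a_1 \notin \wp K$, then verify this via $p = VF$, the identity $\wp(V^{n-1}x)=V^{n-1}(x^p-x)$ with the Teichm\"uller reduction for the converse, and the injectivity of Frobenius on $K/\wp K$ (all of which check out).
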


Henceforth, we will use the term ``field extension'' interchangeably with ``character'' (in the context of $ \cohom^1(G_K, \mathbb{Z}/p^n)$). Consequently, we can refer to the product of field extensions.

Due to Artin-Schreier-Witt theory and the correspondence between covers of curves and extensions of function fields, we can represent any $\mathbb{Z}/p^n$-cover $\phi_n: Y_n \to \mathbb{P}_k^1$ be an affine equation of the form
\begin{equation}
    \label{ASWeqn}
    \wp(y_1, \ldots, y_n)=(f_1, \ldots, f_n)=\underline{f},
\end{equation}
where $(f_1, \ldots, f_n)$ is an element of the ring $W_n(k(x))$. Moreover, for $1 \leq i < n$, the character $\phi_i:=(\phi_n)^{p^{n-i}}$ corresponds to the unique $\mathbb{Z}/p^i$-subcover $\phi_i: Y_i \to \mathbb{P}_k^1$ of $\phi_n$ given by
\begin{equation}
\label{eqnASWsub}
\wp(y_1, \ldots, y_i)=(f_1, \ldots, f_i).
\end{equation}
We also denote $\phi_n$ by $\mathfrak{K}_{n}(f_1, \ldots, f_n)$. The following proposition is well-known.
\begin{proposition}
\label{propreducedform}
    Every vector $(f_1, \ldots, f_n) \in W_n(k(x))$ belongs to the same Artin-Schreier-Witt class as a reduced vector $\underline{g}=(g_1, \ldots, g_n)$, where none of the non-zero terms in the partial fraction decomposition of $g_i$ has degree divisible by $p$.
\end{proposition}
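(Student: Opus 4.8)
The plan is to reduce to the case $n=1$, i.e. ordinary Artin--Schreier theory, by induction on $n$, using the elementary structure of Witt vector arithmetic in characteristic $p$. I will freely use the following standard facts over a ring of characteristic $p$: the Frobenius $F$ on $W_n$ is the coordinatewise $p$-th power map; the truncation $W_n \to W_{n-1}$ forgetting the last coordinate is a ring homomorphism, hence commutes with $F$ and with $\wp$; and for $a,b \in k(x)$ one has $(0,\ldots,0,a)+(0,\ldots,0,b) = (0,\ldots,0,a+b)$ and $(x_1,\ldots,x_n)+(0,\ldots,0,b) = (x_1,\ldots,x_{n-1},x_n+b)$ in $W_n(k(x))$. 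These last two identities follow at once by comparing ghost components over $\mathbb{Q}$ together with the integrality of the Witt addition polynomials; in particular $\wp(0,\ldots,0,h) = (0,\ldots,0,\wp(h))$.

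First I would treat $n=1$. Given $f \in k(x)$, write its partial fraction decomposition $f = \sum_m b_m x^m + \sum_a \sum_{j\ge 1} c_{a,j}(x-a)^{-j}$. If some monomial $b_m x^m$ with $p\mid m$ is nonzero, choose the one of largest degree $N$ and replace $f$ by $f - \wp(b_N^{1/p} x^{N/p}) = f - b_N x^N + b_N^{1/p} x^{N/p}$; the $p$-th root exists because $k$ is algebraically closed, the top "bad" monomial disappears, only the coefficient of $x^{N/p}$ (of strictly smaller degree) is altered, and the polar parts are untouched. Iterating clears the polynomial part of all monomials whose degree is divisible by $p$, including the constant term. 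Then, for each pole $a$, the analogous step applied to $\wp\bigl(c_{a,J}^{1/p}(x-a)^{-J/p}\bigr) = c_{a,J}(x-a)^{-J} - c_{a,J}^{1/p}(x-a)^{-J/p}$ — a function with a pole only at $a$ — successively removes the terms $(x-a)^{-j}$ with $p\mid j$ without disturbing the polynomial part or the polar parts at the other poles. This produces $h \in k(x)$ with $f + \wp(h)$ reduced.

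For the inductive step, let $\underline{f}=(f_1,\ldots,f_n) \in W_n(k(x))$ and apply the case of $n-1$ to the truncation $(f_1,\ldots,f_{n-1})$, obtaining $\underline{h}' \in W_{n-1}(k(x))$ with $(f_1,\ldots,f_{n-1})+\wp(\underline{h}')$ reduced. Lift $\underline{h}'$ to $\underline{h} = (h_1,\ldots,h_{n-1},0) \in W_n(k(x))$; since truncation commutes with $\wp$, the vector $\underline{f}+\wp(\underline{h})$ equals $(g_1,\ldots,g_{n-1},f_n^{*})$ with $(g_1,\ldots,g_{n-1})$ reduced and some $f_n^{*} \in k(x)$. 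Now apply the case $n=1$ to $f_n^{*}$ to get $h_n$ with $f_n^{*}+\wp(h_n) =: g_n$ reduced, and add $(0,\ldots,0,h_n)$: by the arithmetic facts above, $\underline{f}+\wp(h_1,\ldots,h_{n-1},h_n) = (g_1,\ldots,g_{n-1},f_n^{*}+\wp(h_n)) = (g_1,\ldots,g_n)$, which is reduced.

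The only genuine content is the $n=1$ reduction, and the point to be careful about there is that every elementary step alters the partial fraction decomposition in a controlled way — strictly lowering the maximal "bad" degree of the polynomial part, or the maximal "bad" polar order at a single fixed pole — so that the procedure terminates; moreover there is no interference between the polynomial part and the various polar parts, since the correcting function $\wp(\alpha x^{m})$ has its polar locus at $\infty$ only, while $\wp\bigl(\alpha(x-a)^{-m}\bigr)$ has its polar locus at $a$ only. Everything else is formal manipulation of Witt vectors in characteristic $p$.
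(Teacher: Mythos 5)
Your proof is correct and takes essentially the same route as the paper's, which only sketches the argument (for $n=1$ kill the terms of degree divisible by $p$ since they are $p$-th powers, for $n>1$ induct coordinatewise on the Witt vector, deferring details to the cited references); your version supplies the Witt-vector identities and the termination argument explicitly. One small slip: for the constant term ($N=0$) your elementary step merely replaces $b_0$ by $b_0^{1/p}$, so that iteration does not terminate; since a nonzero constant has degree $0$, which is divisible by $p$, it must be removed, and the fix is to solve $z^p-z=b_0$ in the algebraically closed field $k$ in a single step instead of iterating.
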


\begin{proof}
    When $n=1$, any term $a$ of $g_1$ of degree divisible to $p$ is a $p$-power, i.e., $a=b^p$ for some $b \in k(x)$. Hence, $a$ could be killed off by adding $b^p-b$ to $g_1$. When $n>1$, one can employ the classical induction technique for Witt vectors (e.g., \cite[\S 26, Addendum]{MR2371763}). See \cite[Lemma A.2.3]{MR3714509} for a more detailed proof.
\end{proof}

We call such a vector $\underline{g}$ \textit{reduced}. For the rest of this discussion, we assume that $\underline{f}$ is reduced. 

\begin{proposition}
\label{prop1to1ASW}
    There is a one-to-one correspondence between the set of reduced Witt vectors $\underline{a}=(a_1, \ldots, a_n) \in W_n(k(x))$ modulo the natural $(\mathbb{Z}/p^n)^{\times}$-action, where $a_1 \neq 0$, and the set of $\mathbb{Z}/p^n$-covers of $\mathbb{P}^1_k$.
\end{proposition}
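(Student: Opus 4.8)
The plan is to combine Artin–Schreier–Witt theory (Corollary \ref{coraswcycliccorrespondence}) with the existence of reduced representatives (Proposition \ref{propreducedform}), and then to pin down exactly which ambiguity remains. First I would recall that by Corollary \ref{coraswcycliccorrespondence}, $\mathbb{Z}/p^n$-extensions of $k(x)$ correspond bijectively to the cyclic subgroups $V \subseteq W_n(k(x))/\wp(W_n(k(x)))$ generated by a class $[\underline{a}]$ with $a_1 \notin \wp(k(x))$; equivalently, since the quotient of $\mathbb{P}^1_k$ is fixed, to Artin–Schreier–Witt classes $[\underline{a}]$ with $a_1 \notin \wp(k(x))$ modulo the action of $(\mathbb{Z}/p^n)^\times$, because a generator of a $\mathbb{Z}/p^n$-module is well-defined only up to a unit and multiplication by $c \in (\mathbb{Z}/p^n)^\times$ on the character corresponds to the same cyclic cover (with the Galois action twisted by $c$). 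So the content of the Proposition is to replace ``Artin–Schreier–Witt classes with $a_1 \notin \wp(k(x))$'' by ``reduced Witt vectors with $a_1 \neq 0$.''

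The key steps are then: (1) Given a reduced $\underline{a}$ with $a_1 \neq 0$, observe $a_1 \notin \wp(k(x))$: indeed if $a_1 = \wp(h) = h^p - h$ then the partial fraction decomposition of $a_1$ would, after the standard reduction, be forced to vanish — more directly, a nonzero reduced $a_1$ has a pole or a nonconstant behavior incompatible with being of the form $h^p - h$ unless $a_1 = 0$, since $\wp$ applied to a nonconstant $h$ produces a term of degree divisible by $p$ in the partial fraction expansion, which a reduced vector by definition does not have (and the constant-$h$ case gives $a_1 \in k$, handled by noting a reduced constant term is either $0$ or not a $\wp$-image over the algebraically closed $k$... here one must be slightly careful, and I would simply cite Proposition \ref{propreducedform}'s normalization to say constants are absorbed). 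Thus reduced vectors with $a_1 \neq 0$ give genuine $\mathbb{Z}/p^n$-covers. (2) Surjectivity: any $\mathbb{Z}/p^n$-cover arises from some $\underline{f} \in W_n(k(x))$ with $f_1 \notin \wp(k(x))$, and by Proposition \ref{propreducedform} we may replace $\underline{f}$ by a reduced $\underline{g}$ in the same class; then $g_1 \neq 0$ since otherwise $f_1 = g_1 + \wp(h_1) \in \wp(k(x))$, contradiction. (3) Injectivity: if two reduced vectors $\underline{a}, \underline{a}'$ (with $a_1, a_1' \neq 0$) give the same cover, then by Corollary \ref{coraswcycliccorrespondence} the classes $[\underline{a}], [\underline{a}']$ generate the same subgroup, so $[\underline{a}'] = c[\underline{a}]$ for some $c \in (\mathbb{Z}/p^n)^\times$, i.e. $\underline{a}'$ and $c\cdot\underline{a}$ differ by $\wp(\underline{h})$; since both $\underline{a}'$ and $c \cdot \underline{a}$ are reduced (the $(\mathbb{Z}/p^n)^\times$-action preserves reducedness — multiplication by a Teichmüller-type unit scales partial-fraction coefficients without changing degrees, and I would verify this preserves the ``no degree divisible by $p$'' condition), the uniqueness clause in Proposition \ref{propreducedform} — or rather, the fact that two reduced vectors in the same class are equal, which follows because their difference is a reduced element of $\wp(W_n(k(x)))$ hence zero — forces $\underline{a}' = c \cdot \underline{a}$. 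Conversely the $(\mathbb{Z}/p^n)^\times$-action visibly does not change the cover, only the labeling of the Galois action, so the correspondence is well-defined on orbits.

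The main obstacle I expect is step (3): establishing that reduced representatives in a fixed Artin–Schreier–Witt class are \emph{unique}, and that the $(\mathbb{Z}/p^n)^\times$-action genuinely preserves the reduced property. Uniqueness amounts to showing that the only reduced element of $\wp(W_n(k(x)))$ is $\underline{0}$; for $n=1$ this is the classical statement that $h^p - h$ always has every nonzero partial-fraction term of degree divisible by $p$ (the poles of $h^p-h$ at a finite place are $p$ times those of $h$, and the polynomial part similarly), and for $n>1$ it requires the inductive Witt-vector bookkeeping, which is exactly the computation underlying Proposition \ref{propreducedform} and can be cited from \cite[Lemma A.2.3]{MR3714509} and \cite[\S 26, Addendum]{MR2371763}. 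For the unit action, the subtlety is that multiplication by $c \in (\mathbb{Z}/p^n)^\times$ in the Witt ring is a polynomial (not linear) operation on the ghost/Witt components, so I would check on ghost components that it rescales each $f_i$ by fixed elements of $k$ (powers of $c \bmod p$ suitably interpreted), hence cannot introduce degree-divisible-by-$p$ terms; alternatively, one can sidestep this by defining the correspondence directly on $(\mathbb{Z}/p^n)^\times$-orbits of classes and only invoking reducedness as a choice of normal form, which is cleaner and is presumably what is intended. I would present the argument in the latter, cleaner form and relegate the partial-fraction bookkeeping to a reference.
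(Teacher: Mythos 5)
Your proposal follows essentially the same route as the paper: Corollary \ref{coraswcycliccorrespondence} identifies $\mathbb{Z}/p^n$-covers with the cyclic subgroups generated by order-$p^n$ classes, reduced vectors (Proposition \ref{propreducedform}) serve as unique normal forms for those classes, and the residual ambiguity is exactly the $(\mathbb{Z}/p^n)^\times$-action on generators. Your instinct to define the correspondence on orbits of classes rather than trying to verify that literal Witt-ring multiplication by $c$ preserves reducedness is the right call — in fact it does not preserve it (for $p=3$ one computes $2\cdot(1/x,\,a_2)=(2/x,\,2a_2+1/x^{3})$ in $W_2$, which acquires a term of degree divisible by $p$), so the ``natural action'' on reduced vectors must be read as multiply-then-re-reduce, exactly as in your cleaner formulation and, implicitly, in the paper's proof.
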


\begin{proof}
Recall that, as stated in Corollary \ref{coraswcycliccorrespondence}, every group $\langle \underline{a},\wp(W_n(k(x)))\rangle/ \wp(W_n(k(x))) \cong \mathbb{Z}/p^n$ corresponds to a cyclic-order-$p^n$ cover. Let $V_{p^n} \subset W_n(k(x))/\wp(W_n(k(x)))$ denote the set of order-$p^n$ elements, which can be represented by reduced length-$n$ Witt vectors with non-zero first entries. It is straightforward to verify that the action of $(\mathbb{Z}/p^n)^{\times}$ on $W_n(k(x))$ by multiplication induces a faithful action on $V_{p^n}$. Furthermore, it is worth noting that the orbit of each element generates the same extension of $k(x)$. Therefore, the proposition follows.
\end{proof}

Now, let us examine the ramification of $\phi_n$. Suppose $\mathscr{P}:= \{P_1, \ldots, P_r\}$ is the set of poles of the $f_i$'s. Then $\mathscr{P}$ is also the branch locus of $\phi_n$. Moreover, for each ramified point $Q_j$ above $P_j$, $\phi_n$ induces a degree-$p^{m_j}$ cyclic extension of the complete local ring $\hat{\mathcal{O}}_{Y_n,Q_j}/\hat{\mathcal{O}}_{\mathbb{P}^1,P_j}$. Hence, we can define the ramification filtration of $\phi_n$ at a branch point $P_j$.

Suppose the inertia group of $Q_j$ is $\mathbb{Z}/p^{m_j}$ (where $n\le m_j$). We say that the \textit{$i$-th ramification break} of $\phi_n$ at $P_j$ is $-1$ for $i \le n-m_j$. For $i > n-m_j$, the $i$-th ramification break of $\phi_n$ at $P_j$ is the $(i-n+m_j)$-th one of $\hat{\mathcal{O}}_{Y_n,Q_j}/\hat{\mathcal{O}}_{\mathbb{P}^1,P_j}$. We denote the $i$-th upper ramification break of $\phi_n$ at $P_j$ by $u_{j,i}$, and we define the \textit{$i$-th conductor of $\phi$ at $P_j$} to be $e_{j,i}:=u_{j,i}+1$. One then can explicitly compute the ramification filtration at each branch point of $\phi_n$ in terms of $\underline{f}$ as follows.

\begin{theorem}[{\cite[Theorem 1]{MR1935414}}]
\label{theoremcaljumpirred}
With the assumptions and the notations as above, we have
\begin{equation}
\label{eqnformulalowerjumpasw}
    u_{j,i}=\max\{ p^{i-l} \deg_{(x-P_j)^{-1}} (f_{l}) \mid l=1, \ldots, i\}, 
\end{equation}
for $i>n-m_j$.
\end{theorem}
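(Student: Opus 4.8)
The plan is to reduce the statement to a purely local computation at $P_j$ and then to obtain the closed-form maximum by unrolling a one-step recursion. Since the ramification filtration is a local invariant, I would first complete at $P_j$: setting $K := k((t))$ with $t = x - P_j$ (or $t = 1/x$ when $P_j = \infty$), the theorem becomes a statement about the upper breaks of the local $\mathbb{Z}/p^{m_j}$-extension $L/K$ cut out by the image of the reduced vector $\underline{f}$ in $W_n(K)$. Writing $m_l := \deg_{(x-P_j)^{-1}}(f_l) = -v_t(f_l)$ for the pole order of $f_l$ at $P_j$ (so $m_l = 0$ exactly when $f_l$ is regular there), the bottom unramified levels $l \le n-m_j$ contribute $m_l = 0$ and drop out of the maximum; it therefore suffices to prove, for the totally ramified part,
\[
u_{j,i} = \max\{\, p^{\,i-l}\, m_l \;:\; l = 1, \ldots, i \,\}.
\]

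The inductive skeleton rests on the compatibility of the upper numbering with quotients. The subcover $\phi_{i-1}$ is the quotient of $\phi_i$ by the unique order-$p$ subgroup $p^{i-1}\mathbb{Z}/p^i$, which sits at the very bottom of the ramification filtration of the cyclic group $\mathbb{Z}/p^i$ and hence is killed at the top break; by Herbrand's theorem the upper breaks of $\phi_{i-1}$ are exactly $u_{j,1}, \ldots, u_{j,i-1}$, so at each stage the only new invariant is the top break $u_{j,i}$. The base case $i=1$ is classical Artin-Schreier theory: because $\underline{f}$ is reduced (Proposition \ref{propreducedform}) the pole order $m_1$ is prime to $p$, and a direct computation of the different of $\wp(y_1)=f_1$ gives the single break $u_{j,1}=m_1$. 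The whole theorem then follows from the recursion
\[
u_{j,i} = \max\{\, p\cdot u_{j,i-1},\ m_i \,\},
\]
since combining it with $u_{j,1}=m_1$ reproduces the displayed maximum.

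Establishing this recursion is the main obstacle. The factor $p$ encodes the Frobenius/Verschiebung twist of the Witt-vector structure: the deepest ramification carried by $(f_1,\ldots,f_{i-1})$ is promoted to level $i$ with its break multiplied by $p$, while $f_i$ contributes a fresh break of size $m_i$, and the top break is the dominant of the two. The genuine difficulty is that Witt-vector addition does not act coordinate-wise on pole orders, so one cannot read $u_{j,i}$ directly off the raw $f_i$: when $p\,u_{j,i-1}$ dominates, passing to a reduced representative forces the top coordinate to acquire pole order $p\,u_{j,i-1}$, whereas when $m_i$ dominates it survives as the leading pole. I would control this by keeping $\underline{f}$ reduced, so that its breaks satisfy precisely the constraints $u_{j,i}\ge p\,u_{j,i-1}$ with coprime-to-$p$ excess encoded by conditions \ref{item2condbranchingdatum} and \ref{item3condbranchingdatum}, and then computing the top break either from the different or, more conceptually, from the refined Swan conductor of the order-$p^i$ Artin-Schreier-Witt character, for which $\max_l p^{i-l}m_l$ is the natural attached value. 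The delicate point is the equality regime $p\,u_{j,i-1}=m_i$, where extra cancellation occurs in the Witt arithmetic; this is exactly where the coprimality hypotheses on the pole orders are indispensable.
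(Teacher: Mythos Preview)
The paper does not prove this theorem at all: it is quoted verbatim as \cite[Theorem 1]{MR1935414} and used as a black box, with no accompanying proof environment. So there is no ``paper's own proof'' to compare against.

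That said, your outline is the standard route to this result and is essentially correct. Localizing at $P_j$, using Herbrand's theorem to identify the lower breaks with those of the subcover, and then establishing the one-step recursion $u_{j,i}=\max\{p\,u_{j,i-1},m_i\}$ is exactly how the cited reference (and, e.g., Schmid's original treatment or the account in Thomas' \emph{Enseign.\ Math.} paper) proceeds. Your identification of the delicate equality case $p\,u_{j,i-1}=m_i$ is apt, though note that in the present setting this case is in fact \emph{excluded}: since $\underline{f}$ is reduced, $m_i$ is prime to $p$ whenever it is positive, while $p\,u_{j,i-1}$ is divisible by $p$, so the two can never coincide. This is what makes the recursion clean and is the real reason reducedness is imposed before stating the theorem.
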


\begin{proposition}
With the above notations, the genus of $Y_i$ is
    \begin{equation}
    \label{eqngenera}
        g_{Y_i} = 1+ p^i(g_X-1)+ \frac{\sum_{l=1}^i (\sum_{j=1}^r e_{j,l})(p^l-p^{l-1})}{2}
    \end{equation}
\end{proposition}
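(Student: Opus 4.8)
The plan is to deduce the formula from the Riemann--Hurwitz formula applied to $\phi_i\colon Y_i\to X=\mathbb{P}^1_k$ together with the conductor--discriminant formula for the abelian cover $\phi_i$. Since $\phi_i$ is Galois of degree $p^i$ with group $G=\mathbb{Z}/p^i$, Riemann--Hurwitz gives
\begin{equation*}
2g_{Y_i}-2=p^i(2g_X-2)+\deg\mathfrak{D}_{\phi_i},
\end{equation*}
where $\mathfrak{D}_{\phi_i}$ is the different, so the whole statement reduces to the identity $\deg\mathfrak{D}_{\phi_i}=\sum_{l=1}^i(p^l-p^{l-1})\sum_{j=1}^r e_{j,l}$.

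To compute the different, I would use that for the abelian extension $\phi_i$ the conductor--discriminant formula expresses the discriminant divisor on $X$ as $\sum_{\chi}\mathfrak{f}(\chi)$, the sum being over all characters $\chi$ of $G$, where $\mathfrak{f}(\chi)=\sum_j f_{P_j}(\chi)[P_j]$ is the conductor divisor of $\chi$ and $f_{P_j}(\chi)$ its local Artin conductor exponent; since $k$ is algebraically closed all residue degrees are $1$, so the degree of this discriminant divisor on $X$ equals $\deg\mathfrak{D}_{\phi_i}$ on $Y_i$, giving $\deg\mathfrak{D}_{\phi_i}=\sum_{\chi\in\widehat{G}}\deg\mathfrak{f}(\chi)$. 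Now partition the nontrivial characters of the cyclic group $G=\mathbb{Z}/p^i$ by order: there are exactly $p^l-p^{l-1}$ characters of order $p^l$ for each $l=1,\dots,i$. The crucial point is that any character $\chi$ of order $p^l$ satisfies $f_{P_j}(\chi)=e_{j,l}$ for every branch point $P_j$: such a $\chi$ is a faithful character of the quotient $\gal(Y_l/X)=\mathbb{Z}/p^l$, and because the upper ramification filtration is compatible with passage to quotients, $f_{P_j}(\chi)$ is one more than the largest upper ramification break of $\phi_l$ at $P_j$, which with the indexing of this section is $u_{j,l}+1=e_{j,l}$ (and this is $0$ precisely when $P_j$ is unramified in $\phi_l$, matching the convention $u_{j,l}=-1$). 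Summing, $\deg\mathfrak{D}_{\phi_i}=\sum_{l=1}^i(p^l-p^{l-1})\sum_{j=1}^r e_{j,l}$, and substituting into Riemann--Hurwitz yields the asserted genus formula; as a sanity check, for $n=1$ this recovers the classical Artin--Schreier genus formula.

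A more elementary alternative is induction on $i$ via the Artin--Schreier step $Y_i\to Y_{i-1}$, for which $2g_{Y_i}-2=p(2g_{Y_{i-1}}-2)+(p-1)\sum_{Q'}(b_{Q'}+1)$ with $b_{Q'}$ the single lower break at $Q'$; one then rewrites the lower breaks of the top $\mathbb{Z}/p$-layer, and the number of points of $Y_{i-1}$ over each $P_j$, in terms of the upper breaks $u_{j,l}$ via Herbrand's formula, and the telescoping sum gives the same answer. I expect the main difficulty in either approach to be purely a matter of bookkeeping: one must keep careful track of inertia groups and of the shift in the indexing of ramification breaks --- a branch point of $\phi_n$ need not be ramified in every subcover $\phi_i$, and its inertia in $\phi_i$ can be a proper quotient of its inertia in $\phi_n$ --- and be precise about passing between the conductor of the local extension upstairs and the discriminant exponent downstairs in the totally ramified case. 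Once these conventions are fixed, the identification $f_{P_j}(\chi)=e_{j,l}$ for a character of order $p^l$ is immediate from the definition of $e_{j,l}$, and the rest is a routine computation.
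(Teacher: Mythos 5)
Your proof is correct and follows essentially the same route as the paper: Riemann--Hurwitz (equivalently Grothendieck--Ogg--Shafarevich) applied to $\phi_i$ reduces everything to the local identity $\deg\mathfrak{D}_{\phi_i}=\sum_{l=1}^i(p^l-p^{l-1})\sum_{j=1}^r e_{j,l}$, which the paper simply cites from \cite[Fact 2.3]{MR3194816}. The only difference is that you supply a proof of that identity via the conductor--discriminant formula together with the count of $p^l-p^{l-1}$ characters of order $p^l$ in $\mathbb{Z}/p^i$, each of local conductor $e_{j,l}$ --- which is precisely the computation underlying the cited fact.
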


\begin{proof}
Applying the Grothendieck-Ogg-Shafarevich (GOS) formula \cite[Exp. X formula 7.2]{MR0491704} to $\phi_i: Y_i \xrightarrow{} X$, we obtain
\begin{equation}
\label{eqnGOS}
2g_{Y_i}-2=\deg(\phi_i) (2g_X-2) + \sum_{j=1}^r \sw_{P_j}(\phi_i),
\end{equation}
where $\sw_{P_j}(\phi_i)$ is the Swan conductor of $\phi_i$ at $P_j$. See also \cite[Theorem 4.2.9]{MR2415398} for a generalized version of the formula to higher dimension 
According to \cite[Fact 2.3]{MR3194816}, the value of $\sw_{P_j}(\phi_i)$ (which Pop calls the degree of the different) is determined by the ramification at $P_j$
\begin{equation}
\label{eqndifferent}
\sw_{P_j}(\phi_i)=\sum_{l=1}^i e_{j,l}(p^l-p^{l-1}).
\end{equation}
This immediately implies the claim about the genus of $Y_i$.
\end{proof}

\begin{remark}
\label{remarkgosramified}
    An alternative derivation of equation (\ref{eqngenera}) can be obtained by considering the ramified points. Let $P_j \in \mathscr{P}$ be a ramified point, and let $m^i_j$ be the inertia degree of $\phi_i$ at $P_j$. The number of ramified points in $Y_i$ lying over $P_j$ is then $p^{i-m^i_j}$, and we label these points as $Q_{j,1}, \ldots, Q_{j,p^{i-m^i_j}}$. Below is another version of (\ref{eqnGOS}), based on the ramified points, which is known as the Riemann-Hurwitz formula \cite{MR0463157}
    \begin{equation}
    \label{eqnRH}
    \begin{split}
        2g_{Y_i}-2 & =\deg(\phi_i) (2g_X-2) + \sum_{j=1}^r \sum_{l=1}^{p^{i-m^i_j}} \deg\big(\mathscr{D}_{Q_{j,l}}\big). \\
        & =p^i (2g_X-2) + \sum_{j=1}^r p^{i-m^i_j} \deg\big(\mathscr{D}_{Q_{j,1}}\big),
    \end{split}
    \end{equation}
    where $\mathscr{D}_{Q_{j,1}}$ is the different at $Q_{j,i}$ in the classical sense \cite[III, \S 3]{MR554237}. Specifically, we have
    \begin{equation*}
        \deg\big(\mathscr{D}_{Q_{j,1}}\big)= \sum_{s=i-m^i_j+1}^i e_{j,s}(p^{s-i+m^i_j}-p^{s-i+m^i_j-1}).
    \end{equation*}
    Therefore, we obtain $\sw_{P_j}(\phi_i)=\sum_{j=1}^{p^{i-m^i_j}} \deg\big(\mathscr{D}_{Q_{j,l}}\big)=\sum_{l=1}^i e_{j,l}(p^l-p^{l-1})$ and thus (\ref{eqngenera}). For more detailed computation, refer to \cite[\S 2.A.]{MR3194816}.
\end{remark}

We can deduce from (\ref{eqngenera}) that $\mathbb{Z}/p^n$-covers with the same genus on each sub-cover must have the same $d_i: = \sum_{j=1}^r e_{j,i}$. We use an $r \times n$ matrix to record the branching datum of the cover, as shown below:
\begin{equation}
\label{eqnmatrixbranchingdatum}    
\begin{bmatrix}
e_{1,1} & e_{1,2} & \ldots & e_{1,n} \\
e_{2,1} & e_{2,2} & \ldots & e_{2,n} \\
\vdots  & \vdots  & \ddots & \vdots  \\
e_{r,1} & e_{r,2} & \ldots & e_{r,n} \\
\end{bmatrix}
\end{equation}
We refer to this matrix as the \textit{branching datum} of $\phi_n$ and the vector $\underline{d}:= (d_1, \ldots, d_n)$ as its \textit{conductors}. 

From now on, we will utilize the notion of partitions of integer matrices introduced in \S \ref{secpartitionmatrix}. 

\begin{proposition}
\label{propcorrespondencematrixcover}
A matrix of the form (\ref{eqnmatrixbranchingdatum}) is a branching datum of a $\mathbb{Z}/p^n$-cover if and only if it belongs to $\Omega_{\underline{d}}$.
\end{proposition}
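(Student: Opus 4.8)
The plan is to prove the two implications separately. Recall that for $M=[e_{j,i}]$ of the form \eqref{eqnmatrixbranchingdatum} with column sums $\underline d$, membership in $\Omega_{\underline d}$ amounts precisely to conditions \ref{item1condbranchingdatum}--\ref{item3condbranchingdatum} (condition \ref{item4condbranchingdatum} being the definition of $\underline d$). So I need, for the ``only if'' direction, to extract conditions \ref{item1condbranchingdatum}--\ref{item3condbranchingdatum} from the ramification of $\phi_n$, and for the ``if'' direction, to realize a given such $M$ by an explicit reduced Witt vector. The two main inputs are Theorem~\ref{theoremcaljumpirred} (the break formula) and Proposition~\ref{propreducedform} (reduced representatives).

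For the forward direction, represent $\phi_n$ by a reduced $\underline f=(f_1,\dots,f_n)$ and fix a branch point $P_j$; write $\delta_l:=\deg_{(x-P_j)^{-1}}(f_l)$ for the pole order of $f_l$ at $P_j$. The first thing to note is that, by reducedness, $\delta_l$ is prime to $p$ whenever $f_l$ has a pole at $P_j$, since otherwise the leading term of the partial-fraction expansion of $f_l$ at $P_j$ would have degree divisible by $p$. Let $t_j$ be the number of leading zeros of the $j$-th row; applying the fact ``branch locus $=$ pole locus'' to the subcover $\phi_{t_j}$, none of $f_1,\dots,f_{t_j}$ has a pole at $P_j$, so $e_{j,1}=\dots=e_{j,t_j}=0$ and conditions \ref{item1condbranchingdatum}--\ref{item3condbranchingdatum} are trivial at those levels. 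For $i>t_j$, Theorem~\ref{theoremcaljumpirred} gives $u_{j,i}=\max_{t_j<l\le i}p^{\,i-l}\delta_l$; reading this off yields $p\,u_{j,i-1}\le u_{j,i}$ (equivalently condition \ref{item2condbranchingdatum}), and, when this inequality is strict, that the maximum is attained only at $l=i$, so $u_{j,i}=\delta_i$ is a positive integer prime to $p$ and $e_{j,i}-(pe_{j,i-1}-p+1)=u_{j,i}-p\,u_{j,i-1}\equiv\delta_i\not\equiv 0\pmod p$ (condition \ref{item3condbranchingdatum}); taking $i=t_j+1$ (or using $e_{j,1}=0$ when $t_j\ge 1$) gives condition \ref{item1condbranchingdatum}.

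For the converse, given $M\in\Omega_{\underline d}$ I may assume every row is nonzero. I would fix distinct points $P_1,\dots,P_r\in\mathbb A^1_k$ and, for each row $j$, assemble $\underline f^{(j)}=(f_1^{(j)},\dots,f_n^{(j)})$ supported at $P_j$ by setting $f_i^{(j)}=(x-P_j)^{1-e_{j,i}}$ at the ``jump'' levels of row $j$ (including its first nonzero level) and $f_i^{(j)}=0$ elsewhere; conditions \ref{item1condbranchingdatum} and \ref{item3condbranchingdatum} guarantee that the pole orders $e_{j,i}-1$ so introduced are prime to $p$, so $\underline f^{(j)}$ is reduced, and running the computation of the previous paragraph in reverse shows $\mathfrak{K}_n(\underline f^{(j)})$ has conductors exactly $(e_{j,1},\dots,e_{j,n})$ at $P_j$. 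I then put $\underline f:=\sum_{j=1}^r\underline f^{(j)}$. Since $d_1\ge 1$ and every $e_{j,1}$ is $0$ or $\ge 2$, some $e_{j,1}\ge 2$, so $f_1\ne 0$ and, being reduced, $f_1\notin\wp(k(x))$; hence $\mathfrak{K}_n(\underline f)$ is a genuine $\mathbb{Z}/p^n$-cover (Corollary~\ref{coraswcycliccorrespondence}). It is étale away from $\{P_1,\dots,P_r\}$ (including at $\infty$), and at each $P_j$ the Witt vector $\underline f-\underline f^{(j)}=\sum_{j'\ne j}\underline f^{(j')}$ lies in $W_n(\widehat{\mathcal O}_{P_j})=\wp\!\left(W_n(\widehat{\mathcal O}_{P_j})\right)$ (here $\wp$ is surjective because $\widehat{\mathcal O}_{P_j}$ is a complete local ring with algebraically closed residue field), so $\underline f$ and $\underline f^{(j)}$ give the same extension of $k((x-P_j))$ and hence the same ramification at $P_j$. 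Therefore the branching datum of $\mathfrak{K}_n(\underline f)$ is $M$. (Alternatively, one can induct on $n$: realize the truncation $M^{p}$, then adjoin a suitable $f_n$.)

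\emph{Expected main difficulty.} The genuinely routine parts are the two inductive conductor computations. The points that need care are: (i) in the forward direction, the branch points over which a proper subcover is unramified --- the rows beginning with zeros --- where one must correctly use the break convention ($u=-1$, $e=0$) and the fact that a reduced Witt vector has no pole at such a point; and (ii) in the converse, verifying that superposing the single-pole pieces introduces no new branch points and changes no local conductor, for which the surjectivity of $\wp$ on $W_n$ of a complete local ring --- so that a Witt vector regular at $P_j$ is locally a $\wp$-coboundary --- is the essential input.
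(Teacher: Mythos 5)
Your proposal is correct and follows essentially the same route as the paper: the forward direction reads conditions \ref{item1condbranchingdatum}--\ref{item3condbranchingdatum} off the break formula of Theorem~\ref{theoremcaljumpirred} applied to a reduced representative, and the converse realizes $M$ by placing a single pole of order $e_{j,i}-1$ at $P_j$ at each jump level. The only (harmless) divergence is that you assemble the realizing cover as a Witt-ring sum of single-branch-point pieces and then invoke surjectivity of $\wp$ on $W_n(\widehat{\mathcal O}_{P_j})$ to localize, whereas the paper simply writes down the componentwise sum $f_i=\sum_j c_{j,i}(x-P_j)^{-u_{j,i}}$; your version supplies the verification the paper leaves implicit.
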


\begin{proof}
    ``$\Rightarrow$'': By Theorem \ref{theoremcaljumpirred}, a matrix of the form (\ref{eqnmatrixbranchingdatum}) must satisfy conditions \ref{item1condbranchingdatum}, \ref{item2condbranchingdatum}, and \ref{item3condbranchingdatum} in order to be a branching datum of a $\mathbb{Z}/p^n$-cover.

    ``$\Leftarrow$'': Suppose we are given $M=(e_{j,i}) \in \Omega_{\underline{d}} \cap M_{r \times n}(\mathbb{Z}_{\ge 0})$. Let $P_1, \ldots, P_r$ be $r$ distinct points in $\mathbb{P}^1_k$. For each $1 \leq i \leq n$, set
\begin{equation*}
    u_{j,i} := e_{j,i}-1 \quad \text{and} \quad c_{j,i}:=\begin{cases}
    1 & \text{if $e_{j,i} \not\equiv 1 \pmod{p}$ and $e_{j,i} \neq -1$}  \\
    0 & \text{otherwise} \\
    \end{cases}.
\end{equation*}
Define a cover $\phi_n: Y_n \xrightarrow{} \mathbb{P}^1_k$ by
\begin{equation*}
    \wp(y_1, \ldots, y_n)=\left( \sum_{j=1}^r  \frac{c_{j,1}}{(x-P_j)^{u_{j,1}}}, \ldots, \sum_{j=1}^r  \frac{c_{j,n}}{(x-P_j)^{u_{j,n}}} \right) \in W_n(k(x)).
\end{equation*}
It can be verified that this cover has the branching datum $M$.
\end{proof}

\begin{remark}
If $\phi_n$ has the branching datum \eqref{eqnmatrixbranchingdatum}, then the branching datum of $\phi_i:=(\phi_n)^{p^{n-i}}$ is given by the $r \times i$ matrix formed by taking the first $i$ columns of \eqref{eqnmatrixbranchingdatum}.
\end{remark}

\begin{example}
    \label{examplesameendsdifferent}
Consider two $\mathbb{Z}/9$-covers $f: Y_2 \rightarrow Y_1 \rightarrow \mathbb{P}^1$ and $g: Z_2 \rightarrow Z_1 \rightarrow \mathbb{P}^1$ in characteristic 3 with branching data $M$ and $N$, respectively:
\begin{equation*}
M=\begin{bmatrix}
2 & 4 \\
3 & 7 \\
\end{bmatrix} \hspace{10mm} N=\begin{bmatrix}
2 & 6 \\
0 & 6 \\
\end{bmatrix}.
\end{equation*}
For instant, $f=\mathfrak{K}_{2}\big(x+\frac{1}{x^2}, 0\big)$ and $g=\mathfrak{K}_2\big(x, x^5+\frac{1}{x^5} \big)$. It can be shown that $g_{Y_2}=g_{Z_2}=30$, while $g_{Y_1}=3$ and $g_{Z_1}=0$.
\end{example}

\begin{remark}
    Since any flat family of $\mathbb{Z}/p^n$-covers must have $\mathbb{Z}/p^i$-sub-covers with the same genera, it is reasonable to utilize the conductors $(d_1, \ldots, d_n)$ as an invariant for the family.
\end{remark}


\begin{definition}
With the notation above, we call the Weil divisor $D_i=\sum_{l=1}^r d_{l,i} P_l$ the \textit{branched divisor} of $\phi_n$'s $\mathbb{Z}/p^i$-sub-cover.
\end{definition}

\begin{example}
Suppose $\chi$ is a $\mathbb{Z}/5^2$-cover of $\mathbb{P}^1_k$ defined by the following equation
\[ \wp(\underline{y})= \bigg( \frac{1}{x}+\frac{1}{(x-1)}, \frac{1}{(x-1)^7}+\frac{1}{(x-2)^{12}} \bigg)=:\underline{a}. \]
As each entry of the Witt vector has no terms of degree divisible by $5$, it is reduced. Furthermore, apply \cite[Theorem 1]{MR1935414}, we learn that $\chi$ branches at three points $x=0$, $x=1$, and $x=2$ with conductors $(2, 6)$, $(2, 8)$, and $(0, 13)$, respectively. Hence, its branching datum is
\[ \begin{bmatrix}
2 & 6 \\
2 & 8 \\
0 & 13 \\
\end{bmatrix}. \]
The $\mathbb{Z}/5$-subcovering is the Artin-Schreier curve represented by $y_1^5-y_1=\frac{1}{x}+\frac{1}{x-1}$ with branching datum $[2,2]^{\top}$.
\end{example}

\subsection{The \texorpdfstring{$p$}{p}-rank of Artin-Schreier-Witt curves and partitions}   
\label{secmatrixASW}

Recall that the $p$-rank of a curve $C/k$, denoted by $p_C$, is defined as the integer such that the cardinality of $\text{Jac}(C)[p](\overline{k})$ is $p^{p_C}$. This birational invariant holds significant importance as it has been shown to influence the automorphism group $\text{Aut}(C)$ \cite{MR902787} and the Newton polygon of $C$'s associated $L$-function \cite{MR3287680}.

When a cover $\rho: X \xrightarrow{} Y$ is $G$-Galois, where $G$ is a $p$-group, the Deuring-Shafarevich formula  \cite[Corollary 1.8]{MR742696} indicates that
\begin{equation}
    \label{eqndeuringshafarevich}
    1-p_X= \lvert G \rvert (1-p_Y)-\sum_{x \in X^{\ram}} (e_x-1),
\end{equation}
where $X^{\ram}$ represents the ramified locus of $\rho$, and $e_x$ is the ramification index at the point $x$. We now apply this formula to the case where $G \cong \mathbb{Z}/p^n$, $Y \cong \mathbb{P}^1_k = \Proj k[x,u]$, and $\rho$ is defined by a reduced $\underline{f}=(f_1, \ldots, f_n) \in W_n(k(x))$. This leads to the following expression:
\[ p_X=1-p^n+\sum_{x_i \in X^{\ram}}(e_{x_i}-1), \]
where $e_{x_i}=p^{m_i}$ with $m_i:=\min \{j \mid \nu_{P_i}(f_j) <0 \}$. Suppose in addition that $\rho$ has branching datum $M$ as in \eqref{eqnmatrixbranchingdatum}. Then one can apply the analysis on Remark \ref{remarkgosramified} to get
\begin{equation}
    \label{eqnprank}
    p_X=1-p^n+\sum_{i=1}^{n} p^{n-i}(p^i-1)s_i. 
\end{equation}
where $s_i$ ($1 \le i \le n$) is the number of branch points of ramification index $p^i$. Thus, the $p$-rank of a cover is determined by $\underline{s}=(s_1, \ldots, s_n)$. 
In addition, a straight-forward computation shows that, if one fixed the $p$-ranks of the sub-covers of a family of Artin-Schreier-Witt covers, then $\underline{s}$ is also fixed. The $n$-Artin-Schreier-Witt curves of conductors $\underline{\iota}$ and $p$-ranks corresponding to $\underline{s}$ are intimately related to the partition set $\Omega_{\underline{\iota}, \underline{s}}$ as defined in \S \ref{secpartitionmatrix}.

\begin{lemma}
There exists an Artin-Schreier-Witt $k$-curve with conductors $\underline{\iota}$ and $p$-ranks $\underline{s}$ if and only if $\Omega_{\underline{\iota}, \underline{s}}$ is nonempty.
\end{lemma}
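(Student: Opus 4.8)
The plan is to prove both directions by translating between the existence of a cover and the combinatorics of $\Omega_{\underline{\iota},\underline{s}}$, using the correspondence between covers and branching data established in Proposition \ref{propcorrespondencematrixcover} together with the $p$-rank formula \eqref{eqnprank}.

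First I would handle the direction ``$\Leftarrow$''. Suppose $\Omega_{\underline{\iota},\underline{s}}$ is nonempty, and pick a matrix $M=(e_{j,i}) \in \Omega_{\underline{\iota},\underline{s}}$. By Proposition \ref{propcorrespondencematrixcover}, since $M \in \Omega_{\underline{\iota}}$, there is a $\mathbb{Z}/p^n$-cover $\phi_n: Y_n \to \mathbb{P}^1_k$ with branching datum $M$; explicitly, one may take the cover constructed in the proof of that proposition. Its conductors are $\underline{\iota}$ by condition \ref{item4condbranchingdatum}. It remains to check that the $p$-ranks of the sub-covers are given by $\underline{s}$. By the discussion preceding the lemma, the $p$-rank of $\phi_i: Y_i \to \mathbb{P}^1_k$ is determined by the vector whose $l$-th entry ($l \le i$) counts the branch points of ramification index exactly $p^l$ in $\phi_i$. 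A branch point $P_j$ has ramification index $p^l$ in $\phi_i$ precisely when the $l$-th entry $e_{j,l}$ of the $j$-th row is the first nonzero one; equivalently, by the structure of rows of a partition matrix (condition \ref{item2condbranchingdatum} forces, once a row becomes nonzero, all later entries to be nonzero, and condition \ref{item1condbranchingdatum}/\ref{item3condbranchingdatum} together with $e_{j,1}\not\equiv 1$ govern the leading entry), the number of rows whose first nonzero entry is in column $l$ equals (number of nonzero entries in column $l$) $-$ (number of nonzero entries in column $l-1$). Plugging into \eqref{eqnprank} for each $i$ and comparing with the definition of $\Omega_{\underline{\iota},\underline{s}}$ (the $i$-th column of $M$ has exactly $s_i$ nonzero entries) gives $p_{Y_i}$ the prescribed value, so $\phi_n$ is an Artin-Schreier-Witt curve with conductors $\underline{\iota}$ and $p$-ranks $\underline{s}$.

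For the direction ``$\Rightarrow$'', suppose such a curve $\phi_n$ exists. After replacing the defining Witt vector by a reduced one (Proposition \ref{propreducedform}), let $M$ be its branching datum as in \eqref{eqnmatrixbranchingdatum}. By Proposition \ref{propcorrespondencematrixcover}, $M \in \Omega_{\underline{\iota}}$; conditions \ref{item4condbranchingdatum} and the hypothesis on the genera (which, via \eqref{eqngenera}, pins down $d_i = \sum_j e_{j,i} = \iota_i$) show $M$ is a partition of $\underline{\iota}$. The same row-structure bookkeeping as above, now run in reverse, shows that the number of nonzero entries in column $i$ of $M$ equals $\sum_{l=1}^i s_l^{(i)}$ where $s_l^{(i)}$ is the number of branch points of ramification index $p^l$ for $\phi_i$; since the $p$-ranks are $\underline{s}$ and these determine the $s_l^{(i)}$ compatibly (as noted in the paragraph before the lemma, fixing the $p$-ranks of all sub-covers fixes $\underline{s}$), the $i$-th column of $M$ has exactly $s_i$ nonzero entries. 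Hence $M \in \Omega_{\underline{\iota},\underline{s}}$, which is therefore nonempty.

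The only genuinely delicate point — and the step I would write most carefully — is the combinatorial identification ``(number of branch points of $\phi_i$ with inertia $p^l$) $=$ (number of rows of $M$ with first nonzero entry in column $l$)'' and the ensuing telescoping that matches the column-wise count of nonzero entries with the cumulative sum $s_1 + \cdots + s_i$. This requires knowing that a row of a partition matrix, once it becomes nonzero at some column, stays nonzero (immediate from condition \ref{item2condbranchingdatum}, since $p d_{i,j-1} - p + 1 \ge 1$ when $d_{i,j-1}\ge 1$), so that ``first nonzero column'' is well defined and equals the level at which that branch point first ramifies; everything else is substitution into \eqref{eqnprank}. I do not expect any serious obstacle beyond keeping this bookkeeping transparent.
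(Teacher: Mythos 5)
Your proposal is correct and follows essentially the same route as the paper: the forward direction reads off the branching datum (via Theorem \ref{theoremcaljumpirred}) and observes it lies in $\Omega_{\underline{\iota},\underline{s}}$, while the reverse direction invokes the explicit construction from the proof of Proposition \ref{propcorrespondencematrixcover}. The paper's own proof is considerably terser — it omits the verification that the $p$-ranks come out right — so your careful bookkeeping matching column-wise nonzero counts with the ramification indices entering \eqref{eqnprank} is a welcome (and correct) elaboration rather than a departure.
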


\begin{proof}
``$ \Rightarrow$'' Suppose $\phi_n$ is an Artin-Schreier-Witt $k$-curve with conductors $\underline{\iota}$ and $p$-ranks $\underline{s}$. By Theorem \ref{theoremcaljumpirred}, the associated matrix of $\phi_n$ belongs to $\Omega_{\underline{\iota}, \underline{s}}$, implying that $\Omega_{\underline{\iota}, \underline{s}}$ is nonempty.

``$ \Leftarrow$'' Let $M$ be a matrix in $\Omega_{\underline{\iota}, \underline{s}}$, we can apply the process in the proof of Proposition \ref{propcorrespondencematrixcover} to construct a curve with datum $M$.
\end{proof}

\section{Moduli space of Artin-Schreier-Witt curves}
\label{secmodulispace}

Consider fixed parameters $p, \underline{\iota}=(\iota_1, \ldots, \iota_n)$ and a fixed algebraically closed ground field $k$ of characterisic $p$. In this section, we study the strata $\mathcal{ASW}_{\underline{\iota},M}$ of the moduli space $\mathcal{ASW}_{\underline{\iota}}$ where $M$ is a matrix of the form (\ref{matrixpartition}). We show the irreducible components of $\mathcal{ASW}_{\underline{\iota}}$ are in bijection with the elements of $\Omega_{\underline{\iota}}$ and find the dimensions of these components. In the following, $\mathcal{M}_{g}$ denotes the moduli space of smooth algebraic curves of genus $g$ over $k$.

\subsection{Artin-Schreier-Witt covers} 

Let $S$ be a $k$-scheme. An $S$-curve is a smooth, proper, surjective, finitely presented morphism $C \xrightarrow{} S$ whose geometric fibers are smooth, projected connected curves. Similar as in  \cite{MR2985514}, we define an \textit{Artin-Schreier-Witt curve} $C$ over $S$ to be an $S$-curve for which there exists an (unspecified) inclusion $j: \mathbb{Z}/p^n \to Aut_S(C)$. By tracking different data associated to Artin-Schreier-Witt curves, we can define three different Moduli functors:
 \begin{definition}
 Fix the data of conductors $\underline{\iota}$. We define the following Moduli functors from the category of  $k$-schemes to the category of sets:
     \begin{enumerate}
         \item The functor $\mathcal{ASW}_{\underline{\iota}} \subset \mathcal{M}_g$ maps
         \begin{align}
              S & \mapsto \{ C/S \}/_{\sim}
         \end{align}
         where $C$ is a $S$-curve, such that there exists an inclusion $\mathbb{Z}/p^n \to Aut(C/S)$, $C/S$ is isomorphic to $\mathbb{P}^1_S$ and each geometric fiber of the associated quotient map $C \to C/(\mathbb{Z}/p^n)$ is a $\mathbb{Z}/p^n$-cover with conductors $\underline{\iota}$.
         \item The functor $\mathcal{ASW}act_{\underline{\iota}}$ maps
         \begin{align}
              S & \mapsto \{ C/S, \kappa:  \mathbb{Z}/p^n \to Aut(C/S) \}/_{\sim}
         \end{align}
         with $C$ as above, but here the inclusion $\kappa$ is part of the data.
         \item The functor $\mathcal{ASW}cov_{\underline{\iota}}$ maps
         \begin{align}
              S & \mapsto \{ C/S, \kappa:  \mathbb{Z}/p^n \to Aut(C/S), \psi: C/(\mathbb{Z}/p^n) \to \mathbb{P}^1_S \}/_{\sim}
         \end{align}
         with $C, \kappa$ as above and a fixed isomorphism $\psi:  C/(\mathbb{Z}/p^n) \to \mathbb{P}^1_S$.
     \end{enumerate}
 \end{definition}

The moduli functors $\mathcal{ASW}_{\underline{\iota}}$ and $\mathcal{ASW}cov_{\underline{\iota}}$ are those of intrinsic interest, while one can think about $\mathcal{ASW}act_{\underline{\iota}}$ as an auxiliary object sitting in between them.
\begin{remark}
As for each $0 \geq i \geq n$ there is a unique $\mathbb{Z}/p^i$ subgroup in $\mathbb{Z}/p^n$, so we can factor the map $C \to C/\mathbb{Z}/p^n$ uniquely into a sequence of $\mathbb{Z}/p$-Galois covers. With this observation, we can rephrase $\mathcal{ASW}cov_{\underline{\iota}}$ as:
\begin{equation}
    \begin{split}
        \mathcal{ASW}\cov_{\overline{\iota}}: \{ k\text{-Schemes} \} & \xrightarrow{} \text{Sets} \\
        S & \mapsto \{ \phi: C_n \xrightarrow{} C_{n-1} \xrightarrow{} \ldots \xrightarrow{} C_0 \cong \mathbb{P}^1_S \}/_{\sim}
    \end{split}
\end{equation}
where $\phi$ is $\mathbb{Z}/p^n$-Galois, hence $C_i \rightarrow C_0$ is $\mathbb{Z}/p^i$-Galois. The curve $C_i$ can be identified as the $\mathbb{Z}/p^{n-i}$-invariant part of $C_n$.

A direct computation with the Riemann-Hurwitz formula shows that with this notation, it holds for the genera of the subquotients 
\begin{align}
g_{C_i}=1+ p^i(g_{C_0}-1)+\frac{\sum_{l=1}^i \iota_{l}(p^j-p^{j-1})}{2}. \label{gen-form}
\end{align}
\end{remark}

\begin{remark}
    We restrict ourselves to the case where the genus of $C_1$ is at least $2$ because of technical reasons. This is necessary to avoid the existence of infinitesimal automorphisms on subquotients and could be avoided by adding markings to the curve. However, these fringe cases are not really relevant for our theory, so they are ignored at this point.
\end{remark}

\begin{proposition}
There is an algebraic stack representing $\mathcal{ASW}_{\underline{\iota}}$.
\end{proposition}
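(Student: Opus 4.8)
The plan is to realize $\mathcal{ASW}_{\underline{\iota}}$ as a locally closed substack of $\mathcal{M}_g$ (where $g = g_{C_n}$ is determined by $\underline{\iota}$ via the genus formula \eqref{gen-form}), and then invoke standard representability results for $\mathcal{M}_g$ together with a descent argument. The first step is to pass through the auxiliary functor $\mathcal{ASW}cov_{\underline{\iota}}$, which rigidifies the problem: a point of $\mathcal{ASW}cov_{\underline{\iota}}(S)$ is a tower $C_n \to C_{n-1} \to \cdots \to C_0 = \mathbb{P}^1_S$ with a $\mathbb{Z}/p^n$-action, and by Artin–Schreier–Witt theory (Corollary \ref{coraswcycliccorrespondence}, Proposition \ref{prop1to1ASW}) such towers are, fiberwise, cut out by reduced Witt vectors $\underline{f} \in W_n(k(x))$ of a prescribed pole-order shape controlled by the branching datum. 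I would show $\mathcal{ASW}cov_{\underline{\iota}}$ is representable by a scheme (or at worst an algebraic space) of finite type over $k$: the branch locus $\mathscr{P} \subset \mathbb{P}^1_S$ is a relative effective Cartier divisor of bounded degree, hence parametrized by an open subscheme of a symmetric product / configuration space, and over that base the coefficients of the partial-fraction expansions of the $f_i$ (with pole orders bounded by the $\iota_i$, cf. Theorem \ref{theoremcaljumpirred} and Proposition \ref{propcorrespondencematrixcover}) range over an affine space; the reducedness and non-degeneracy conditions (first entry nonzero, degrees prime to $p$, conductor constraints) are open or locally closed conditions. This exhibits $\mathcal{ASW}cov_{\underline{\iota}}$ as a scheme.

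Next I would compare the three functors. There are forgetful morphisms $\mathcal{ASW}cov_{\underline{\iota}} \to \mathcal{ASW}act_{\underline{\iota}} \to \mathcal{ASW}_{\underline{\iota}}$. The first forgets the chosen isomorphism $\psi : C/(\mathbb{Z}/p^n) \xrightarrow{\sim} \mathbb{P}^1_S$; since any two such isomorphisms differ by an element of $\mathrm{PGL}_2$, the map $\mathcal{ASW}cov_{\underline{\iota}} \to \mathcal{ASW}act_{\underline{\iota}}$ is a $\mathrm{PGL}_2$-torsor (here the hypothesis $g_{C_1} \geq 2$, noted in the excerpt, guarantees finiteness of automorphism groups so this is clean). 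The second forgets the embedding $\kappa : \mathbb{Z}/p^n \hookrightarrow \mathrm{Aut}_S(C)$; the fibers are a torsor under the (finite) group of such embeddings modulo those realized by $\mathrm{Aut}(C)$, so $\mathcal{ASW}act_{\underline{\iota}} \to \mathcal{ASW}_{\underline{\iota}}$ is finite étale onto its image. Taking the quotient stack $[\mathcal{ASW}cov_{\underline{\iota}} / \mathrm{PGL}_2]$ and then the further finite quotient, one obtains an algebraic stack; alternatively, one checks directly that $\mathcal{ASW}_{\underline{\iota}}$ is the substack of $\mathcal{M}_g$ whose $S$-points are $S$-curves $C$ admitting some $\mathbb{Z}/p^n \hookrightarrow \mathrm{Aut}_S(C)$ with $C/(\mathbb{Z}/p^n) \cong \mathbb{P}^1_S$ and the prescribed conductors — this is a constructible, indeed locally closed, condition on $\mathcal{M}_g$ (the genus of each subquotient is locally constant, the branch-point count and conductors are constructible by semicontinuity of Swan conductors / the different), so the substack inherits algebraicity from $\mathcal{M}_g$.

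The key steps in order: (1) bound the combinatorial data and build $\mathcal{ASW}cov_{\underline{\iota}}$ as a finite-type $k$-scheme using the explicit Artin–Schreier–Witt description; (2) identify $\mathcal{ASW}cov_{\underline{\iota}} \to \mathcal{ASW}act_{\underline{\iota}}$ as a $\mathrm{PGL}_2$-torsor and $\mathcal{ASW}act_{\underline{\iota}} \to \mathcal{ASW}_{\underline{\iota}}$ as finite étale; (3) conclude $\mathcal{ASW}_{\underline{\iota}} \cong [[\mathcal{ASW}cov_{\underline{\iota}}/\mathrm{PGL}_2]/(\text{finite group data})]$ is algebraic, and compatibly identify it with a locally closed substack of $\mathcal{M}_g$. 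The main obstacle I anticipate is step (2): making precise and checking that the forgetful maps are representable morphisms of the expected type requires care with the automorphism-group dimension hypothesis and with the fact that the $\mathbb{Z}/p^n$-action is "unspecified" in $\mathcal{ASW}_{\underline{\iota}}$ — one must verify that the condition "there exists such an action with these conductors" glues well in families and defines an algebraic substack, which is essentially the content of showing the relevant Hom-functor of group homomorphisms $\mathbb{Z}/p^n \to \mathrm{Aut}_S(C)$ is represented by a finite unramified $S$-scheme. This is where I would mirror the argument of Pries–Zhu \cite{MR2985514} most closely, replacing $\mathbb{Z}/p$ by $\mathbb{Z}/p^n$ and tracking the tower of subquotients.
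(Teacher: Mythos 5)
Your strategy is sound and would prove the statement, but it is genuinely different from the paper's route. The paper works top--down: it first represents $\mathcal{ASW}act_{\underline{\iota}}$ by embedding it into the known algebraic stacks $\mathcal{M}_{g,g',k}[G]$ of curves with a faithful $G$-action (citing Bertin--Romagny type results), cutting out the locus with prescribed subquotient genera as an open and closed substack of a fiber product of the level-wise $\mathbb{Z}/p$-stacks, and imposing the conductor conditions as closed conditions; it then obtains $\ASW_{\underline{\iota}}$ as the image under the representable, quasi-finite forgetful morphism to $\mathcal{M}_g$, and only afterwards handles $\mathcal{ASW}\cov_{\underline{\iota}}$ (via Hom-stacks). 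You work bottom--up: build $\mathcal{ASW}\cov_{\underline{\iota}}$ explicitly from the configuration space of branch divisors and the affine space of partial-fraction coefficients of reduced Witt vectors, then descend through the two forgetful maps. Your approach has the advantage of being concrete and of making the later dimension counts (the paper's analysis of the fibers of $B_M$) part of the construction itself; the paper's approach outsources the hard stack-theoretic input to the literature and avoids having to verify that an arbitrary family over a base scheme is fppf-locally given by a Witt-vector equation in standard form, which your step (1) silently requires (it holds, via the relative Artin--Schreier--Witt sequence over the affine complement of the branch divisor, but it must be said).

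Two claims in your step (2) need repair, though neither is fatal. First, $\mathcal{ASW}\cov_{\underline{\iota}}$ is not a scheme or algebraic space: every object $(C,\kappa,\psi)$ has automorphism group containing (in fact equal to) the deck group $\mathbb{Z}/p^n$, so it is a stack; your explicit parameter space is an atlas for it, not an isomorphism onto it, and the quotient presentation $[\,\cdot\,/\mathrm{PGL}_2]$ must be taken in the 2-categorical sense. Second, $\mathcal{ASW}act_{\underline{\iota}} \to \ASW_{\underline{\iota}}$ is finite and unramified (since $\aut(C)$ is finite and reduced for $g \ge 2$) but not \'etale in general, because the number of admissible embeddings $\mathbb{Z}/p^n \hookrightarrow \aut(C)$ can jump; what you actually need, and what the paper uses, is only that the morphism is representable and closed, so that $\ASW_{\underline{\iota}}$ is the (closed) image of an algebraic stack inside $\mathcal{M}_g$.
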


\begin{proof}
We start by showing the representability of $\mathcal{ASW}act_{\underline{\iota}}$.
Let $G$ be a finite group and $\MggpG$ be the 
Moduli functor from the category of $k$-schemes parametrising smooth algebraic curves $C$ of genus $g$ together with a faithful $G$-action on $C$ such that the quotient $C/G$ is a smooth curve of genus $g'$. From \cite[Proposition 2.7]{MR2223481} it follows that for any finite group $G$,  $\MggpG$ is an algebraic stack as an open substack of an algebraic stack.

By considering the $\mathbb{Z}/p^{i}$-invariant part of $C$ for each $1 \leq i \leq n$,
we can factor an object of $\Mgg[\mathbb{Z}/p^n]$ uniquely as a sequence 
\begin{center}
		\begin{tikzcd}
		C_n \arrow[r, "f_n"] & C_{n-1} \arrow[r, "f_{n-1}"] & \dots \arrow[r, "f_1"]  & C_0
		\end{tikzcd}
\end{center}
with each $f_i$ being a $\mathbb{Z}/p$-cover. If we restrict this construction to $\mathcal{ASW}_{\underline{\iota}}$, then the genera $g_i$ of each curves $C_i$ is determined by $\iota_i$, 
as they can be computed from the data $\underline{\iota}$ by \ref{gen-form}. The locus of objects of $\Mgg[\mathbb{Z}/p^n]$ with $\mathbb{Z}/p^i$ subquotients having genus $g_i$ form a closed substack that we will denote by $\mathcal{M}_{g=g_n, g_{n-1}, \dots, g'=g_0}[\mathbb{Z}/p^n]$. 

Note that for each $i=0, \dots, n$ by 
quotienting out the $\mathbb{Z}/p^i$-action we get a morphism 
\begin{align*}
    \pi_i: \Mgg[\mathbb{Z}/p^n] \to \coprod_{j=0}^g \mathcal{M}_{j,g',k}[\mathbb{Z}/p^{n-i}]
\end{align*}
which by \cite[Proposition 2.4]{MR2223481} is representable. We can write
\begin{align*}
    \mathcal{M}_{g=g_n, g_{n-1}, \dots, g'=g_0}[\mathbb{Z}/p^n] = \bigcap_{i=0}^n \pi_i^{-1} (\mathcal{M}_{g_i,g',k}[\mathbb{Z}/p^{n-i}])
\end{align*}
showing that $\mathcal{M}_{g=g_n, g_{n-1}, \dots, g'=g_0}[\mathbb{Z}/p^n]$ is an open and closed substack of $\Mgg[\mathbb{Z}/p^n]$. Now the functors $\mathcal{ASW}act_{\underline{\iota}}$ and $\mathcal{M}_{g=g_n, g_{n-1}, \dots, g'=g_0}[\mathbb{Z}/p^n]$ embed into
 \begin{align*}
     \mathcal{F}:={\mathcal{M}}_{g_n,g_{n-1},k}[\mathbb{Z}/p] \times_{\mathcal{M}_{g_{n-1}}} {\mathcal{M}}_{g_{n-1},g_{n-2},k}[\mathbb{Z}/p] \times_{\mathcal{M}_{g_{n-2}}} \dots \times_{\mathcal{M}_{g_{1}}} {\mathcal{M}}_{g_{1},g_{0},k}[\mathbb{Z}/p]
 \end{align*}
We know that the image of $\mathcal{M}_{g=g_n, g_{n-1}, \dots, g'=g_0}[\mathbb{Z}/p^n]$ in $\mathcal{F}$ is an algebraic substack. Furthermore, the image of $\mathcal{ASW}act_{\underline{\iota}}$ consists of those sequences in $\mathcal{M}_{g=g_n, g_{n-1}, \dots, g'=g_0}[\mathbb{Z}/p^n]$, such that the conductors of the compositions $f_1 \circ \dots \circ f_n$
are given by $\underline{\iota}$. As the conductors are given by algebraic conditions, we can identify $\mathcal{ASW}act_{\underline{\iota}}$ as a closed substack of $\mathcal{M}_{g=g_n, g_{n-1}, \dots, g'=g_0}[\mathbb{Z}/p^n]$ 
proving the representability of $\mathcal{ASW}act_{\underline{\iota}}$ by an algebraic stack.

From the proof of 1.4. from \cite{MR1216034} we know that the forgetful morphism $\mathcal{M}_g[\mathbb{Z}/p^n] \to \mathcal{M}_g$ is representable and quasi-finite, so closed. We can identify $\mathcal{ASW}_{\underline{\iota}}$ as the image under this morphism, so it is a closed substack of $\mathcal{M}_g$ and it follows the statement.
\end{proof}
\begin{corollary}
There is an algebraic stack representing $\mathcal{ASW}\cov_{\underline{\iota}}$.
\end{corollary}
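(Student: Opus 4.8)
The plan is to deduce the representability of $\mathcal{ASW}\cov_{\underline{\iota}}$ from that of $\mathcal{ASW}act_{\underline{\iota}}$, which was established in the preceding proposition, by exhibiting the natural forgetful morphism $q \colon \mathcal{ASW}\cov_{\underline{\iota}} \to \mathcal{ASW}act_{\underline{\iota}}$ and showing that it is representable by algebraic spaces (indeed, that its fibers are torsors under an algebraic group). Recall that an object of $\mathcal{ASW}\cov_{\underline{\iota}}$ over $S$ is a triple $(C/S, \kappa, \psi)$, while an object of $\mathcal{ASW}act_{\underline{\iota}}$ is just the pair $(C/S, \kappa)$; the morphism $q$ forgets the chosen isomorphism $\psi \colon C/(\mathbb{Z}/p^n) \xrightarrow{\sim} \mathbb{P}^1_S$. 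Since $C/S$ is an Artin-Schreier-Witt curve, the quotient $Q := C/(\mathbb{Z}/p^n)$ is an $S$-curve all of whose geometric fibers are isomorphic to $\mathbb{P}^1$ — this is exactly the condition ``$C/S$ is isomorphic to $\mathbb{P}^1_S$ '' built into the definition, interpreted fiberwise — so $Q \to S$ is a form of $\mathbb{P}^1$. The data of $\psi$ is precisely a trivialization of this form.

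First I would make precise the claim that the fiber of $q$ over a point $(C/S,\kappa)$ is a torsor under $\underline{\aut}_S(\mathbb{P}^1_S) = \mathrm{PGL}_2$. Given $(C/S,\kappa)$ with quotient $Q \to S$, the set of isomorphisms $\psi \colon Q \xrightarrow{\sim} \mathbb{P}^1_S$ forms a sheaf on $(\mathrm{Sch}/S)$ which, if nonempty, is a $\mathrm{PGL}_{2,S}$-torsor (acting by post-composition), and which becomes nonempty after an fppf (even étale) base change because any form of $\mathbb{P}^1$ is locally trivial. Hence the relative $\underline{\mathrm{Isom}}$-sheaf $\underline{\mathrm{Isom}}_S(Q, \mathbb{P}^1_S)$ is representable by an algebraic space, in fact a $\mathrm{PGL}_2$-torsor over $S$, hence smooth and affine over $S$. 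The key point is then that $\mathcal{ASW}\cov_{\underline{\iota}}$ is, by construction, the total space of this $\underline{\mathrm{Isom}}$-sheaf over $\mathcal{ASW}act_{\underline{\iota}}$: an object of $\mathcal{ASW}\cov_{\underline{\iota}}$ over $S$ is an object of $\mathcal{ASW}act_{\underline{\iota}}$ over $S$ together with a section of the pulled-back $\underline{\mathrm{Isom}}$-space. Therefore $q$ is representable (by algebraic spaces) and smooth, and since $\mathcal{ASW}act_{\underline{\iota}}$ is an algebraic stack, so is $\mathcal{ASW}\cov_{\underline{\iota}}$.

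The main steps, in order, are: (i) construct $q \colon \mathcal{ASW}\cov_{\underline{\iota}} \to \mathcal{ASW}act_{\underline{\iota}}$ as the morphism forgetting $\psi$; (ii) verify that for any $k$-scheme $T$ mapping to $\mathcal{ASW}act_{\underline{\iota}}$, classifying a pair $(C/T,\kappa)$ with quotient $Q_T \to T$, the fiber product $\mathcal{ASW}\cov_{\underline{\iota}} \times_{\mathcal{ASW}act_{\underline{\iota}}} T$ is represented by $\underline{\mathrm{Isom}}_T(Q_T, \mathbb{P}^1_T)$; (iii) cite that $\underline{\mathrm{Isom}}$ of two $T$-curves (or: the Isom-space of two forms of $\mathbb{P}^1$) is representable by an algebraic space — or more concretely, that it is a $\mathrm{PGL}_{2,T}$-torsor and hence affine and smooth over $T$; (iv) conclude by the standard fact that if $\mathcal{X} \to \mathcal{Y}$ is representable by algebraic spaces and $\mathcal{Y}$ is an algebraic stack, then $\mathcal{X}$ is an algebraic stack. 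I expect step (iii) to be the only real content: one must check the descent/effectivity needed for the $\underline{\mathrm{Isom}}$-sheaf to be an algebraic space — this is classical (it follows e.g. from the theory of the Hilbert/Hom scheme for the projective morphisms $Q_T \to T$ and $\mathbb{P}^1_T \to T$, both proper and flat with projective fibers, so $\underline{\mathrm{Isom}}$ is an open subscheme of a suitable Hilbert scheme), but it is the place where one genuinely uses algebraic geometry rather than formal nonsense. Everything else is bookkeeping about how the three moduli functors are nested.
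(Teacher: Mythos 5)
Your proof is correct, but it takes a genuinely different route from the paper. The paper's argument starts from the universal curve $\mathcal{U}\mathcal{ASW}_{\underline{\iota}}$, invokes Hall's theorem on $\mathcal{HOM}$-stacks to produce the stack of morphisms from the universal curve to $\mathbb{P}^1$, and then rigidifies by choosing three disjoint sections landing in distinct orbits, so that the trivialization $\psi$ becomes unique and $\mathcal{ASW}\cov_{\underline{\iota}}$ is cut out as a locally closed substack of that $\mathcal{HOM}$-stack. You instead work relative to $\mathcal{ASW}act_{\underline{\iota}}$ via the forgetful morphism $q$, identify its fibers as $\underline{\mathrm{Isom}}$-spaces of the quotient curve with $\mathbb{P}^1$ --- i.e.\ $\mathrm{PGL}_2$-torsors, representable by classical Hilbert/Isom-scheme theory --- and conclude by the standard fact that a stack admitting a representable morphism to an algebraic stack is algebraic. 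Your 2-fiber-product computation in step (ii) is right (the isomorphism to the pullback rigidifies the groupoid down to the sheaf of trivializations $\psi$), and the torsor in step (iii) is indeed nonempty étale-locally since a smooth proper family with $\mathbb{P}^1$-fibers is an étale form of $\mathbb{P}^1$. What your approach buys: it avoids Hall's theorem and, more importantly, avoids the paper's choice of three sections, whose existence the paper asserts with only the remark that there are finitely many actions; it also gives for free that $q$ is smooth and affine. What the paper's approach buys: it is parallel to the Pries--Zhu treatment of the $n=1$ case and embeds $\mathcal{ASW}\cov_{\underline{\iota}}$ into an explicit ambient $\mathcal{HOM}$-stack, which is convenient for the later analysis of the branch-divisor morphism $B$. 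The only point you leave implicit is that $\mathcal{ASW}\cov_{\underline{\iota}}$ satisfies descent (so that the ``representable over algebraic'' criterion applies); this is routine, and the paper is no more explicit about it.
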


\begin{proof}
    Let $\mathcal{U}\mathcal{ASW}_{\underline{\iota}}$ be the universal Artin-Schreier-Witt-curve which exists because of Yoneda's Lemma for stacks.
    Now consider an (arbitrary) choice of three disjoint sections $\sigma_1, \sigma_2, \sigma_3: \mathcal{ASW}_{\underline{\iota}} \to \mathcal{U}\mathcal{ASW}_{\underline{\iota}}$ such that for every $\mathbb{Z}/p^n$-action in $\mathcal{ASW}act_{\underline{\iota}}$, the sections are mapping to different orbits. This is possible, as there are only finitely many such actions on $\mathcal{U}\mathcal{ASW}_{\underline{\iota}}$. From Theorem $3$ from \cite{HALL2014194} it follows the existence of the algebraic stack
    \begin{align*}
        \mathcal{H}:= \mathcal{HOM}_{\mathcal{ASW}_{\underline{\iota}}}(\mathcal{U}\mathcal{ASW}_{\underline{\iota}}, \mathbb{P}^1_{\mathcal{ASW}act_{\underline{\iota}}})
    \end{align*}
    parametrising morphisms from Artin-Schreier-Witt-curves to $\mathbb{P}^1$. The locus of morphisms mapping the $\sigma_i$ to pairwise distinct
    points forms an open substack. Now, such a morphism $f$ over a scheme $S$ is an object of $\mathcal{ASW}\cov_{\underline{\iota}}$ if and only if there exists an isomorphism $\psi: \mathbb{P}^1_S \to C_S/(\mathbb{Z}/p^n)$ and an $\mathbb{Z}/p^n$-action in $\mathcal{ASW}act_{\underline{\iota}}$ making

    \begin{center}
		\begin{tikzcd}
		& C_S \arrow[dr, "f"] \arrow[dl, "/ (\mathbb{Z}/p^n)"] & \\
        C_S/ (\mathbb{Z}/p^n) \arrow[rr, "\psi"]& & \mathbb{P}^1_S
		\end{tikzcd}
\end{center}
commute. As we keep track of three markings, there is exactly one possible choice for such a $\psi$. Furthermore, there are only finitely many such $\mathbb{Z}/p^n$-actions, so this condition exhibits $\mathcal{ASW}\cov_{\underline{\iota}}$ as a locally closed substack of  $\mathcal{H}$, which proves 
its representability by an algebraic stack.
\end{proof}

\begin{lemma}
\label{lemmacoverandcurve}
Let $g \ge 2$. Then there is a morphism $F: \mathcal{ASW} \cov_{\underline{\iota}} \xrightarrow{} \mathcal{ASW}_{\underline{\iota}}$ and the fiber of $F$ over every geometric point of $\mathcal{ASW}_{\underline{\iota}}$ has dimension $3$.
\end{lemma}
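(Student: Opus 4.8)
The plan is to realize $F$ as the composite of the two forgetful morphisms of algebraic stacks
\[
\mathcal{ASW}\cov_{\underline{\iota}} \longrightarrow \mathcal{ASW}act_{\underline{\iota}} \longrightarrow \mathcal{ASW}_{\underline{\iota}},
\]
the first forgetting the isomorphism $\psi\colon C/(\mathbb{Z}/p^n)\to\mathbb{P}^1_S$ and the second forgetting the inclusion $\kappa\colon\mathbb{Z}/p^n\hookrightarrow\aut(C/S)$; both source and target are algebraic stacks by the preceding results, and on objects $F$ sends $(C,\kappa,\psi)$ over $S$ to the underlying $S$-curve $C$, which lies in $\mathcal{ASW}_{\underline{\iota}}(S)$ because it carries the action $\kappa$ with quotient $\mathbb{P}^1_S$ and conductors $\underline{\iota}$.

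Next I would compute the fibre over a geometric point $x\colon\spec k\to\mathcal{ASW}_{\underline{\iota}}$, corresponding to an Artin--Schreier--Witt curve $C/k$; since $g\ge 2$ the group $\aut(C)$ is finite. Unwinding the $2$-fibre product and transporting structure along the isomorphism $\mathcal{C}\xrightarrow{\sim}C_T$ supplied by it, an object of $\mathcal{ASW}\cov_{\underline{\iota}}\times_{\mathcal{ASW}_{\underline{\iota}},x}\spec k$ over a $k$-scheme $T$ becomes a pair $(\kappa,\psi)$, where $\kappa\colon\mathbb{Z}/p^n\hookrightarrow\aut_T(C_T)$ is an action whose geometric quotients are conductor-$\underline{\iota}$ covers of $\mathbb{P}^1$ and $\psi\colon C_T/(\mathbb{Z}/p^n)\xrightarrow{\sim}\mathbb{P}^1_T$; furthermore any morphism in this fibre induces the identity on $C_T$, hence is itself the identity, so the fibre is an algebraic space. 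As $C$ is defined over the algebraically closed field $k$ and $\aut(C)$ is finite, the datum $\kappa$ is locally constant on $T$ and takes values in a finite set $K$, which is nonempty since $x$ factors through $\mathcal{ASW}_{\underline{\iota}}$. For each fixed $\kappa\in K$, the definition of $\mathcal{ASW}_{\underline{\iota}}$ gives $C/\kappa(\mathbb{Z}/p^n)\cong\mathbb{P}^1_k$, so the remaining datum $\psi$ ranges over the Isom-scheme $\underline{\mathrm{Isom}}_T\!\big(C_T/(\mathbb{Z}/p^n),\mathbb{P}^1_T\big)$, a torsor under $\aut_T(\mathbb{P}^1_T)=\mathrm{PGL}_{2,T}$.

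Hence the fibre over $x$ is a disjoint union, indexed by the finite set $K$, of $\mathrm{PGL}_2$-torsors over $\spec k$, and since $\mathrm{PGL}_{2,k}$ is smooth of dimension $3$, the fibre has dimension $3$. The points requiring care are checking that the two forgetful maps are morphisms of algebraic stacks, that the Isom-functor above is representable by a scheme (it is a form of $\mathrm{PGL}_2$, or an open subscheme of the Hilbert scheme of the trivial $\mathbb{P}^1$-bundle over $T$), and the bookkeeping in unwinding the $2$-fibre product. The hypothesis $g\ge 2$ is used only to make $\aut(C)$, and hence $K$, finite, so that the choice of action contributes nothing to the dimension and exactly the $3$-dimensional automorphism group of $\mathbb{P}^1$ survives; I do not foresee a genuine obstacle beyond this bookkeeping.
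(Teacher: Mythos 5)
Your argument is correct and is essentially the paper's: the paper simply defers to \cite[Lemma 3.1]{MR2985514}, whose proof is exactly your decomposition of the fibre into a finite set of $\mathbb{Z}/p^n$-actions (finite because $g\ge 2$ makes $\aut(C)$ finite) times a $\mathrm{PGL}_2$-torsor of identifications of the quotient with $\mathbb{P}^1$, giving dimension $3$.
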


\begin{proof}
The proof is similar to one for \cite[Lemma 3.1]{MR2985514}.
\end{proof}

\subsection{The branch divisor} 

Suppose $\underline{\iota}=(\iota_1, \ldots, \iota_n)$. We define the contravariant functor $$\C\divisor_{\underline{\iota}}: \{  k\text{-Schemes} \} \rightarrow {\text{Sets} },$$which associates to each scheme $S$ the set of isomorphism classes of $n$ distinct relative Cartier divisors $D_1, D_2, \ldots, \allowbreak D_n$ of $\mathbb{P}^1_S$ such that the degree of $D_i$ is $\iota_i$. This functor is representable as it is the product of the representable functors $\C \divisor_{\iota_i}$ \cite{MR2985514}.

Suppose $S=\spec K$ is a field of characteristic $p$. Given $\underline{D}=(D_1, \ldots, D_n) \in \mathcal{C} \divisor_{\underline{\iota}}(S)$, one can associate with $\underline{D}$ a set of $n$ locally principal effective Weil divisors of $\mathbb{P}^1_S$. More precisely, after a finite flat extension $S' \rightarrow S$, we may write $D_i'=D_i \times_S S'$ as $\sum_{l=1}^{r} e_{l,i} P_l$, where $\sum_{l=1}^r e_{l,i} = \iota_i$, and ${P_1, \ldots, P_r }$ is a set of distinct horizontal sections of $\mathbb{P}^1_{S'}$. Let $M(\underline{D})$ be the $r \times n$ matrix $(e_{l,m})$, where $r_1(M) \le r_2(M) \le \ldots \le r_r(M)$. The partition $M$ of $\underline{\iota}$ induces a natural stratification $\C\divisor_{\underline{\iota}, M}$ of $\C\divisor_{\underline{\iota}}$ by varying the sections ${P_1, \ldots, P_r}$ associated with $\underline{D}$.

We set $\mathcal{C} \divisor_{\underline{\iota}}^1:=\bigcup_{M \in \Omega_{\underline{\iota}}} \mathcal{C} \divisor_{\underline{\iota},M}$. For a fixed $M$, let $H_M \subset S_{r}$ be the subgroup of the symmetric group generated by all transpositions $(j_1, j_2) \in \mathbb{N} \times \mathbb{N}$ for which $r_{j_1}(M)=r_{j_2}(M)$, i.e., the transpositions that interchange identical rows of $M$.

\begin{lemma}
\label{lemmadimstratamatrix}
If $M \in \Omega_{\underline{\iota}}$ is an $r \times n$ matrix, then $\mathcal{C}\divisor_{\underline{\iota},M}$ is irreducible of dimension $r$.
\end{lemma}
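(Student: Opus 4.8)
## Proof Proposal

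The plan is to realize $\mathcal{C}\divisor_{\underline{\iota},M}$ as the quotient of an explicit parameter space of ordered tuples of distinct points by the finite group $H_M$, and to compute the dimension of that parameter space directly. First I would fix the $r\times n$ matrix $M = (e_{l,i}) \in \Omega_{\underline{\iota}}$ with its rows arranged in the chosen non-decreasing order $r_1(M) \le \cdots \le r_r(M)$. A point of $\mathcal{C}\divisor_{\underline{\iota},M}(K)$ is, after a finite flat extension $S' \to S$, a tuple $\underline{D} = (D_1,\ldots,D_n)$ with $D_i = \sum_{l=1}^r e_{l,i} P_l$ for distinct horizontal sections $P_1,\ldots,P_r$ of $\mathbb{P}^1_{S'}$. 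The key observation is that the data of the unordered divisor tuple $\underline{D}$ with prescribed multiplicity matrix $M$ is equivalent to the data of the ordered tuple $(P_1,\ldots,P_r) \in (\mathbb{P}^1)^r$ of \emph{distinct} points, taken modulo the subgroup $H_M \subset S_r$ that permutes rows of $M$ which are identical (since only reorderings that fix $M$ give the same labeled divisor tuple).

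The second step is to identify the parameter space. Let $U_r \subset (\mathbb{P}^1)^r$ denote the open locus of $r$-tuples of pairwise distinct points; this is irreducible of dimension $r$, being a dense open subscheme of the irreducible variety $(\mathbb{P}^1)^r$. There is a natural surjective morphism $U_r \to \mathcal{C}\divisor_{\underline{\iota},M}$ sending $(P_1,\ldots,P_r)$ to the tuple of divisors $\big(\sum_l e_{l,1}P_l, \ldots, \sum_l e_{l,n}P_l\big)$; one checks this lands in the correct stratum because the multiplicity pattern is exactly $M$, and its fibers are precisely the $H_M$-orbits (here one uses that the $P_l$ are distinct, so two orderings give the same tuple of divisors iff they differ by a permutation preserving every column of $M$, i.e.\ an element of $H_M$). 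Since $H_M$ is a finite group, the quotient morphism $U_r \to U_r / H_M$ is finite; combined with the fact that $\mathcal{C}\divisor_{\underline{\iota},M}$ is (by construction, as a locally closed subscheme of the representable functor $\mathcal{C}\divisor_{\underline{\iota}}$) a reasonable scheme, one concludes $\mathcal{C}\divisor_{\underline{\iota},M} \cong U_r/H_M$. A finite quotient of an irreducible variety is irreducible, and a finite surjective morphism preserves dimension, so $\dim \mathcal{C}\divisor_{\underline{\iota},M} = \dim U_r = r$.

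I expect the main obstacle to be the bookkeeping needed to make the "fibers are $H_M$-orbits" claim precise over an arbitrary base, rather than just over a field: one must handle the finite flat extension $S' \to S$ in the definition of the stratum and verify that the descent to $S$ is compatible with the $H_M$-quotient, so that the identification $\mathcal{C}\divisor_{\underline{\iota},M} \cong U_r/H_M$ holds functorially and not merely on geometric points. A secondary point requiring care is confirming that the image of $U_r$ really is all of $\mathcal{C}\divisor_{\underline{\iota},M}$ and is locally closed with the expected scheme structure — this follows from the representability of $\mathcal{C}\divisor_{\underline{\iota}}$ as a product of the $\mathcal{C}\divisor_{\iota_i}$ (each a symmetric power of $\mathbb{P}^1$) cited above, together with the observation that fixing the combinatorial type $M$ cuts out exactly the image of the distinct-points locus under the diagonal-type map into $\prod_i \mathrm{Sym}^{\iota_i}\mathbb{P}^1$. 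Once these functorial subtleties are dispatched, the dimension count and irreducibility are immediate.
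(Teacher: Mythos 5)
Your proposal is correct and follows essentially the same route as the paper: both identify $\mathcal{C}\divisor_{\underline{\iota},M}$ (locally in the finite flat topology) with the quotient of the distinct-points locus $(\mathbb{P}^1)^r\setminus\Delta$ by the finite group $H_M$, and conclude irreducibility and dimension $r$ from the corresponding properties of that quotient. The only difference is that you spell out in more detail why the fibers of the parametrizing map are exactly the $H_M$-orbits, which the paper leaves implicit.
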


\begin{proof}
Let $\Delta$ denote the weak diagonal of $(\mathbb{P}^1)^{r}$, consisting of $r$-tuples with at least two coordinates equal. It is straightforward to see that the quotient of $(\mathbb{P}^1)^{r} \setminus \Delta$ by the natural action of $H_M$ is an irreducible scheme of dimension $r$. Moreover, the spaces $\mathcal{C} \divisor_{\underline{\iota}}$ and $[(\mathbb{P}^1)^{r} \setminus \Delta]/H_M$ are locally isomorphic for the finite flat topology. In particular, the isomorphism identifies each $\underline{D}=(D_1, \ldots, D_n)$ in $\mathcal{C}\divisor_{\underline{\iota},M}$ with the equivalence class of $(P_1, \ldots, P_r)$ in $[(\mathbb{P}^1)^{r} \setminus \Delta]/H_M$, where $P_i$ is the $i$-th point in the set ${P_1, \ldots, P_r }$ of distinct horizontal sections of $\mathbb{P}^1_{S'}$ associated to $\underline{D}$. Thus, $\mathcal{C}\divisor_{\underline{\iota},M}$ is an irreducible scheme of dimension $r$.
\end{proof}

\begin{proposition}
\label{propthemapb}
There is morphism $B: \ASW\cov_{\underline{\iota}} \xrightarrow{} \mathcal{C}\divisor_{\underline{\iota}}$ and the image of $B$ is $\mathcal{C}\divisor^1_{\underline{\iota}}$.
\end{proposition}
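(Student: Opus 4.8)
The plan is to construct the morphism $B$ directly from the branch divisor of the universal family, and then compute its image by combining the local description of Artin--Schreier--Witt covers with the combinatorial constraints encoded in $\Omega_{\underline{\iota}}$. First I would recall that an object of $\ASW\cov_{\underline{\iota}}$ over a $k$-scheme $S$ is a tower $C_n \to C_{n-1} \to \cdots \to C_0 \cong \mathbb{P}^1_S$ of $\mathbb{Z}/p$-covers whose composition is $\mathbb{Z}/p^n$-Galois with conductors $\underline{\iota}$. For each $1 \le i \le n$, the branch locus of the $\mathbb{Z}/p^i$-subcover $C_i \to \mathbb{P}^1_S$ carries, fiberwise, a well-defined ramification divisor $D_i = \sum_l e_{l,i} P_l$ with $\deg D_i = \iota_i$ by formula~\eqref{eqngenera} (equivalently~\eqref{gen-form}); one checks this is a relative Cartier divisor on $\mathbb{P}^1_S$ of the correct degree, using that the conductors are locally constant in flat families (cf.\ the remark following Example~\ref{examplesameendsdifferent}) and that the $P_l$ extend to horizontal sections after a finite flat base change. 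Sending the tower to $\underline{D} = (D_1, \ldots, D_n)$ is functorial in $S$ and hence defines the morphism $B : \ASW\cov_{\underline{\iota}} \to \C\divisor_{\underline{\iota}}$.

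Next I would identify the image. By construction, for any object in the source, the associated matrix $M(\underline{D}) = (e_{l,i})$ is exactly the branching datum of the $\mathbb{Z}/p^n$-cover in the sense of~\eqref{eqnmatrixbranchingdatum}, and by Proposition~\ref{propcorrespondencematrixcover} such a matrix must lie in $\Omega_{\underline{\iota}}$; therefore $\underline{D} \in \C\divisor_{\underline{\iota}, M}$ for some $M \in \Omega_{\underline{\iota}}$, which gives $\Image(B) \subseteq \C\divisor^1_{\underline{\iota}}$. For the reverse inclusion, take any $\underline{D} \in \C\divisor^1_{\underline{\iota}}$; after a finite flat base change $S' \to S$ we may write $D_i' = \sum_l e_{l,i} P_l$ with $M = (e_{l,i}) \in \Omega_{\underline{\iota}}$ and the $P_l$ disjoint horizontal sections. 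Now apply the explicit construction in the proof of Proposition~\ref{propcorrespondencematrixcover}: setting $u_{l,i} = e_{l,i} - 1$ and the coefficients $c_{l,i}$ as there, the Witt vector $\underline{f} = \bigl(\sum_l c_{l,i}(x - P_l)^{-u_{l,i}}\bigr)_{i=1}^n \in W_n(\mathcal{O}_{S'}(x))$ defines a $\mathbb{Z}/p^n$-cover $\phi \colon C_n \to \mathbb{P}^1_{S'}$ with branching datum $M$, hence conductors $\underline{\iota}$; it is an object of $\ASW\cov_{\underline{\iota}}(S')$ mapping to $\underline{D}'$ under $B$. Since $S' \to S$ is an fppf cover and $\Image(B)$ is a substack, this shows $\underline{D}$ itself lies in the image.

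The main obstacle I expect is the relative statement: one must be careful that the construction just sketched works in families, not merely over a field. Concretely, I would need (i) that the $e_{l,i}$ are constant along connected components of $S'$, so that the $u_{l,i}$ and $c_{l,i}$ are well-defined integers/constants rather than varying functions --- this follows from flatness plus the discreteness of conductors, together with~\eqref{eqngenera} forcing $\sum_l e_{l,i} = \iota_i$; (ii) that the cover defined by $\underline{f}$ over $S'$ is everywhere (fiberwise) smooth and connected with the asserted ramification, which reduces to the corresponding fiberwise statement via the smoothness criterion and the fact that the ramification filtration computed by Theorem~\ref{theoremcaljumpirred} is insensitive to the residue field; and (iii) that passing from $S'$ back to $S$ is legitimate, i.e.\ that ``$\underline{D} \in \Image(B)$'' can be checked fppf-locally on the target, which holds because $B$ is a morphism of stacks and its image substack is fppf-local on $\C\divisor_{\underline{\iota}}$. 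None of these is deep, but assembling them cleanly is the technical heart of the argument; the combinatorial input (membership in $\Omega_{\underline{\iota}}$) is entirely supplied by Proposition~\ref{propcorrespondencematrixcover} and Theorem~\ref{theoremcaljumpirred}.
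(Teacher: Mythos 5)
Your proposal is correct and follows essentially the same route as the paper: $B$ is defined functorially from the branch divisors of the subcovers via Yoneda, the containment $\Image(B)\subseteq \mathcal{C}\divisor^1_{\underline{\iota}}$ comes from Proposition~\ref{propcorrespondencematrixcover}, and the reverse inclusion is obtained by the same explicit Witt-vector construction carried out locally in the finite flat topology. The additional care you take with the relative (in-families) aspects is more detail than the paper records but does not change the argument.
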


\begin{proof}
    Let $\phi_n$ be a $\mathbb{Z}/p^n$-cover over $S$, and for each $i = 1, \ldots, n-1$, let $\phi_i$ be the $\mathbb{Z}/p^i$-subcover of $\phi_n$. Define $D_i$ as the closed subscheme of branched points of $\phi_i$, which is a relative Cartier divisor of $\mathbb{P}^1_S$. Let $\underline{D} = (D_1, \ldots, D_n)$. The transformation $\ASW\cov_{\underline{\iota}} (S) \rightarrow \mathcal{C} \divisor_{\underline{\iota}, M}(S)$, given by $\phi_n \mapsto \underline{D}$, is functorial, and it yields a morphism $B: \ASW\cov_{\underline{\iota}} \rightarrow \mathcal{C} \divisor_{\underline{\iota}}$ by Yoneda's Lemma.

For each $\underline{D} \in \mathcal{C} \divisor_{\underline{\iota}}(S)$, let us consider the restriction of $B$ on $M(D)$. As discussed at the beginning of the section, one can identify a pullback $D'_i :=D_i \times_S S'$ with an effective Weil divisor $\sum_{l=1}^r e_{l, i} P_l$, where ${ P_1, \ldots, P_r }$ is a set of distinct horizontal sections of $\mathbb{P}^1_{S'}$. If $\underline{D}=B(\phi_n)$ for some $\phi_n \in \ASW\cov_{\underline{\iota}}(S)$, then ${P_1, \ldots, P_s}$ constitutes the branch locus of the pullback $\phi_n'=\phi_n \times_S S'$ and $d_{l,i}=e_{l,i}-1$ is the $i$-th upper jump of $\phi_n$ above the geometric generic point of $P_l$. In addition, the $e_{l,i}$'s have to follow criterion \ref{item1condbranchingdatum}, \ref{item2condbranchingdatum}, and \ref{item3condbranchingdatum} by Proposition \ref{propcorrespondencematrixcover}. Thus, the image of $B$ is contained in $\mathcal{C} \divisor^1_{\underline{\iota}}$.

Suppose $M=(e_{j,i}) \in \Omega_{\underline{\iota}}$ is an $r \times n$ matrix. To prove that $\mathcal{C} \divisor^1_{\underline{\iota}, M}$ is contained in the image of $B$, it suffices to work locally in the finite flat topology. Given $\underline{D}=(D_1, \ldots, D_n)$ where $D_i =\sum_{l=1}^r e_{l,i} P_l$, define $D'_i :=\sum_{l=1}^s e'_{l,i} P_l$ where
\[e'_{l,i}= \begin{cases} 
      e_{l,i}-1 & e_{l,i} > 1 \text{ and } e_{l,i} \not\equiv 1 \pmod{p} \\
      0 &\text{otherwise}  \\
   \end{cases}.
\]
Then, we choose a (possibly constant) function $f_i \in \mathcal{O}(S)(x)$ with $\divisor_{\infty}(f_i(x))=D'_i$. It is easy to see that the cover $\phi_n$ defined by $\wp(\underline{y})=(f_1(x), \ldots, f_n(x))$ has branching datum $M$. Thus, $\underline{D} \in \Image(B)$ and $\mathcal{C} \divisor^1_{\underline{\iota}, M}$ is contained in the image of $B$ as desired.
\end{proof}

We use $\ASW_{\underline{\iota},M}$ (respectively, $\ASW\cov_{\underline{\iota},M}$) to denote the locally closed reduced subspace of $\ASW_{\underline{\iota}}$ (respectively, $\ASW\cov_{\underline{\iota}}$) whose geometric points correspond to Artin-Schreier-Witt curves (respectively, covers) with branching data $M$. The morphisms $F$ and $B$ preserve the partition $M$. We define $F_M:\ASW\cov_{\underline{\iota},M} \rightarrow \ASW_{\underline{\iota},M}$ and $B_M:\ASW\cov_{\underline{\iota},M} \rightarrow C\divisor^1_{\underline{\iota},M}$ as the natural restrictions.

\subsection{Artin-Schreier-Witt covers with fixed branching divisor}
In this section, we study the $p$-rank strata $\ASW_{\underline{\iota}, \underline{s}}$ of the moduli space $\ASW_{\underline{\iota}}$. Fix an $r \times n$ matrix $M=(\iota_{j,i}) \in \Omega_{\underline{\iota}}$ and a family of divisors $\underline{D} \in \mathcal{C} \divisor_{\underline{\iota},M}$. Set
\[ s(M)=(s_1, \ldots, s_n)=: \underline{s} \in \mathbb{Z}_{\ge 0}^n, \]
where $s_i$ is the number of nonzero entries of the $i$-th column of $M$, hence $s_n=r$.

For $1 \le j \le r$ and $1 \le i \le n$, let $t_{j,i}:=d_{j,i}-\lfloor d_{j,i}/p \rfloor$ (resp. $t_{j,i}=0$) where $d_{j,i}:=\iota_{j,i}-1$ when $\iota_{j,i} \ge 1$ (resp. $\iota_{j,i}=0$). Let $N_M=\sum_j \sum_i t_{j,i}$. For a fixed $1 \le j \le r$, let $L_{j,i}:=(\mathbb{A}^1)^{t_{j,i}-1} \times (\mathbb{A}^1 \setminus \{ 0\})$ for $\iota_{j,i} \not\equiv 1 \pmod {p}$ and $L_{j,i}:=(\mathbb{A}^1)^{t_{j,i}}$ for $\iota_{j,i} \equiv 1 \pmod{p}$. Let $L:= \times_{j,i} L_{j,i}$. Then there is an obvious action on $L$ by the subgroup $H_M \subset S_r$. Set $L_{\underline{D}}:=L/H_M$.

\begin{proposition}
\label{propfiberofBM}
The fiber $\ASW\cov_{\underline{\iota},\underline{D}}$ of $B_M$ over $\underline{D}$  is locally isomorphic for the finite flat topology to a quotient of $L_{\underline{D}}$ by a finite group. Thus,  $\ASW\cov_{\underline{\iota},\underline{D}}$ is irreducible with dimension $N_M$ over $k$. 
\end{proposition}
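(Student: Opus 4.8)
The strategy is to parametrize the fiber $\ASW\cov_{\underline{\iota},\underline{D}}$ explicitly by writing down the most general reduced Witt vector $\underline{f} = (f_1,\ldots,f_n) \in W_n(k'(x))$ (for a finite flat cover $S' \to S$) whose associated cover has branching datum $M$ and branch divisor $\underline{D}$, and then to identify the space of such $\underline{f}$, modulo the equivalences, with a quotient of $L_{\underline{D}}$. First I would work locally in the finite flat topology, so that $\underline{D}$ is given by honest horizontal sections $P_1,\ldots,P_r$ with $D_i = \sum_l \iota_{l,i} P_l$. By Proposition \ref{propreducedform} (and the computation of ramification breaks in Theorem \ref{theoremcaljumpirred}), any such cover is represented by a reduced $\underline{f}$ in which $f_i$ has a pole only at those $P_l$ with $\iota_{l,i} \neq 0$, of order exactly $u_{l,i} = \iota_{l,i} - 1$ there, and with no term of degree divisible by $p$ in its partial fraction decomposition at each $P_l$.

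\textbf{Counting parameters.} The next step is to count the free coefficients. Fixing $l$ and $i$ with $\iota_{l,i} \ge 1$, the partial fraction contribution of $f_i$ at $P_l$ is $\sum_{m=1}^{\iota_{l,i}-1} c_{l,i,m} (x-P_l)^{-m}$ with $c_{l,i,m} = 0$ whenever $p \mid m$; of the $\iota_{l,i}-1 = d_{l,i}$ indices $m \in \{1,\ldots,d_{l,i}\}$, exactly $\lfloor d_{l,i}/p \rfloor$ are divisible by $p$, leaving $t_{l,i} = d_{l,i} - \lfloor d_{l,i}/p \rfloor$ free coefficients. The leading coefficient $c_{l,i,d_{l,i}}$ must be nonzero precisely when $\iota_{l,i} \not\equiv 1 \pmod p$ (so that the pole order is not ``accidentally'' divisible by $p$, i.e.\ condition \ref{item3condbranchingdatum} pins down the top jump), accounting for the factor $\mathbb{A}^1 \setminus \{0\}$ versus $\mathbb{A}^1$ in the definition of $L_{l,i}$; when $\iota_{l,i} \equiv 1 \pmod p$ there is no such constraint on the top coefficient. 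Taking the product over all $(l,i)$ gives a space isomorphic to $L = \times_{j,i} L_{j,i}$ of dimension $N_M = \sum_{j,i} t_{j,i}$, and one must check that every point of $L$ yields a reduced Witt vector whose cover genuinely has branching datum $M$ (not a degeneration) — this is exactly the content of the construction in the proof of Proposition \ref{propcorrespondencematrixcover}, applied coefficient-by-coefficient, using that the top jump is controlled by the nonvanishing built into $L_{l,i}$.

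\textbf{Accounting for equivalences and the group action.} What remains is to pass from the raw parameter space $L$ to $\ASW\cov_{\underline{\iota},\underline{D}}$ itself. Two reduced Witt vectors give the same cover in $\ASW\cov_{\underline{\iota}}$ precisely up to the $(\mathbb{Z}/p^n)^\times$-action of Proposition \ref{prop1to1ASW} together with the choice of ordering of the sections $P_l$ having equal rows of $M$ — the latter being the action of $H_M \subset S_r$, which is why the target is $L_{\underline{D}} = L/H_M$ and not $L$. However, even after normalizing by $H_M$ there may be finitely many reduced representatives of a given Artin--Schreier--Witt class supported at the prescribed divisor (coming from the finitely-generated-but-nontrivial ambiguity in the reduction process for Witt vectors of length $>1$, and from $(\mathbb{Z}/p^n)^\times$), so the honest statement is that $\ASW\cov_{\underline{\iota},\underline{D}}$ is, locally for the finite flat topology, a quotient of $L_{\underline{D}}$ by a finite group. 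Since $L$ is irreducible (a product of open subsets of affine spaces) and $H_M$ and the finite group act algebraically, $L_{\underline{D}}$ and hence $\ASW\cov_{\underline{\iota},\underline{D}}$ is irreducible, and taking a finite quotient does not change the dimension, so $\dim \ASW\cov_{\underline{\iota},\underline{D}} = \dim L = N_M$.

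\textbf{Main obstacle.} I expect the delicate point to be making the passage ``reduced Witt vectors with prescribed poles $\leftrightarrow$ points of $L$'' genuinely bijective rather than merely surjective in one direction: for $n=1$ this is clean, but for $n > 1$ the reduction of a length-$n$ Witt vector is not unique, and one has to argue that the residual ambiguity, intersected with the constraint of being supported on $\underline{D}$ with datum $M$, is a \emph{finite} group acting on $L$ (so that no positive-dimensional fibers are collapsed or created). Controlling this ambiguity — essentially the structure of $W_n(k(x))/\wp W_n(k(x))$ restricted to vectors with bounded pole divisor — together with the $(\mathbb{Z}/p^n)^\times$-action, is the technical heart; everything else is a coefficient count already implicit in the proofs of Propositions \ref{propreducedform} and \ref{propcorrespondencematrixcover}.
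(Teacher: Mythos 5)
Your proposal is correct and follows essentially the same route as the paper: parametrize the fiber by the partial-fraction coefficients of a reduced Witt vector supported on $\underline{D}$, identify this parameter space with $L$ (with the $\mathbb{A}^1\setminus\{0\}$ factors recording the nonvanishing top coefficients forced by condition \ref{item3condbranchingdatum}), and then quotient by the finite group coming from $(\mathbb{Z}/p^n)^{\times}$ (Proposition \ref{prop1to1ASW}) together with the relabeling action of $H_M$. The uniqueness-of-reduction issue you flag as the main obstacle is exactly what the paper delegates to Proposition \ref{prop1to1ASW}, so your treatment matches the paper's.
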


\begin{proof}
Let $\eta$ denote a labeling of the $r$ points in the support of $\underline{D}$. Let $\ASW\cov^{\eta}_{\underline{\iota},\underline{D}}$ be the contravariant functor which associates to $S$ the set of coverings $\phi$ in the fiber $\ASW\cov_{\underline{\iota},\underline{D}}(S)$ along with a labeling $\eta$ of the branch locus. To prove the proposition, it suffices to show that the moduli space for $\ASW\cov^{\eta}_{\underline{\iota},\underline{D}}$ is locally isomorphic to a quotient of $L$.

Suppose we have a point $\phi_n: Y_n \xrightarrow{} \mathbb{P}^1_S$ in $\ASW\cov^{\eta}_{\underline{\iota},\underline{D}}(S)$ represented by a reduced Witt vector $\underline{f}=(f_1, \ldots, f_n) \in W_n(S)$. After a finite flat extension $S' \xrightarrow{} S$, we can assume that $\underline{f}$ is in reduced form. Let $\underline{D}$ have support ${P_1, \ldots, P_r}$. We can express each $f_i$, $1 \le i \le n$, as
\begin{equation*}
    f_i =\sum_{j=1}^r \sum_{l=1}^{d_{j,i}} \frac{c^l_{j,i}}{(x-P_j)^l}
\end{equation*}
where $d_{j,i}\ge 1$, $c_{j,i}^l \in S'$, $c_{j,i}^l=0$ for $l \equiv 0 \pmod{p}$, and $c_{j,i}^{d_{j,i}}\neq 0$. We can define a map from $\ASW\cov^{\eta}_{\underline{\iota},\underline{D}}$ to $L$ by assigning to each covering $\phi_n$ its corresponding set of coefficients $c^l_{j,i}$. Finally, the proposition follows from the fact that $\underline{f}$ and $m \cdot \underline{f}$, where $m$ is multiplicatively invertible in $\mathbb{Z}/p^n$, generate the same cover, as discussed in Proposition \ref{prop1to1ASW}. This implies that $\ASW\cov^{\eta}_{\underline{\iota},\underline{D}}$ is locally isomorphic to the quotient $L/(\mathbb{Z}/p^n)^{\times}$, which establishes the claim.
\end{proof}

\subsection{Irreducible components of the \texorpdfstring{$p$}{p}-rank strata}

\begin{theorem}
The irreducible components of $\ASW\cov_{\underline{\iota},\underline{s}}$ are the strata $\ASW_{\underline{\iota},M}$, with $M \in \Omega_{\underline{\iota}, \underline{s}}$. Suppose $M=(\iota_{j,i})$ is an $r \times n$ matrix. Then the dimension of $\ASW\cov_{\underline{\iota},M}$ is
\begin{equation}
\label{eqndimasw}
    s_n+\sum_{i=1}^n\sum_{j=1}^r \bigg( {\iota_{j,i}}-1 - \bigg\lfloor \frac{\iota_{j,i}-1}{p} \bigg\rfloor \bigg). 
\end{equation}
\end{theorem}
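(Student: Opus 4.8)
The plan is to assemble the statement from the pieces already in place: the fibration structure of $B_M$ over the base $\mathcal{C}\divisor^1_{\underline{\iota},M}$, together with the fact that $F_M$ has purely three-dimensional fibers. First I would fix $M \in \Omega_{\underline{\iota},\underline{s}}$ and recall from Proposition \ref{propthemapb} and the discussion preceding it that $B_M : \ASW\cov_{\underline{\iota},M} \to \mathcal{C}\divisor^1_{\underline{\iota},M}$ is surjective, and that by Lemma \ref{lemmadimstratamatrix} the base $\mathcal{C}\divisor_{\underline{\iota},M}$ is irreducible of dimension $r = s_n$. By Proposition \ref{propfiberofBM}, every fiber $\ASW\cov_{\underline{\iota},\underline{D}}$ is irreducible of the same dimension $N_M = \sum_{j,i} t_{j,i}$, where $t_{j,i} = d_{j,i} - \lfloor d_{j,i}/p \rfloor$ and $d_{j,i} = \iota_{j,i} - 1$ (with the convention $t_{j,i}=0$ when $\iota_{j,i}=0$). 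An equidimensional fibration over an irreducible base with irreducible fibers has an irreducible total space, so $\ASW\cov_{\underline{\iota},M}$ is irreducible of dimension $r + N_M$.

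Next I would rewrite $r + N_M$ in the stated form. We have
\begin{equation*}
r + N_M = s_n + \sum_{i=1}^n \sum_{j=1}^r \left( d_{j,i} - \left\lfloor \frac{d_{j,i}}{p} \right\rfloor \right) = s_n + \sum_{i=1}^n \sum_{j=1}^r \left( \iota_{j,i} - 1 - \left\lfloor \frac{\iota_{j,i}-1}{p} \right\rfloor \right),
\end{equation*}
which is exactly \eqref{eqndimasw}; one only has to check that the summand vanishes precisely when $\iota_{j,i}=0$, matching the convention $t_{j,i}=0$, which it does since $\iota_{j,i}-1 = -1$ gives $-1 - \lfloor -1/p \rfloor = -1-(-1) = 0$.

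It then remains to identify the irreducible components of $\ASW\cov_{\underline{\iota},\underline{s}}$. Since every geometric point of $\ASW\cov_{\underline{\iota},\underline{s}}$ has a well-defined branching datum $M$, and $s(M) = \underline{s}$ forces $M \in \Omega_{\underline{\iota},\underline{s}}$, the strata $\ASW\cov_{\underline{\iota},M}$ for $M \in \Omega_{\underline{\iota},\underline{s}}$ cover $\ASW\cov_{\underline{\iota},\underline{s}}$, and $\Omega_{\underline{\iota},\underline{s}}$ is a finite set. Each $\ASW\cov_{\underline{\iota},M}$ is irreducible and locally closed, so its closure is an irreducible closed subset; the decomposition $\ASW\cov_{\underline{\iota},\underline{s}} = \bigcup_{M \in \Omega_{\underline{\iota},\underline{s}}} \overline{\ASW\cov_{\underline{\iota},M}}$ shows the components are among these closures. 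To see that no stratum closure is contained in another — so that all of them are genuinely maximal — I would invoke the partial ordering $\prec$ on $\Omega_{\underline{\iota}}$: the strata with distinct $M$ within a single $p$-rank stratum $\Omega_{\underline{\iota},\underline{s}}$ are mutually incomparable (a specialization $\overline{\ASW\cov_{\underline{\iota},M}} \supset \ASW\cov_{\underline{\iota},N}$ would force $M \prec N$, hence a strictly larger number of nonzero entries in some column of $N$, contradicting $s(N) = s(M) = \underline{s}$), so none contains another and each is its own component.

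\textbf{Main obstacle.} The genuinely delicate point is the last one: showing that the stratum closures for different $M$ in the same $p$-rank stratum are mutually incomparable, i.e.\ that no degeneration inside $\ASW\cov_{\underline{\iota},\underline{s}}$ can change the branching datum while preserving $\underline{s}$. This requires knowing that the only specializations of a cover with datum $M$ staying in the moduli space correspond to coalescing branch points, which strictly refines $M$ in the $\prec$-ordering and hence strictly increases some column's count of nonzero entries — incompatible with fixed $p$-rank. Everything else is a formal consequence of Lemma \ref{lemmadimstratamatrix}, Proposition \ref{propthemapb}, and Proposition \ref{propfiberofBM} together with the elementary equidimensional-fibration fact.
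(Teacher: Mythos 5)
Your proof follows essentially the same route as the paper: both decompose $\ASW\cov_{\underline{\iota},\underline{s}}$ into the strata $\ASW\cov_{\underline{\iota},M}$ and deduce irreducibility and the dimension formula for each stratum from the fibration $B_M$ over the irreducible $s_n$-dimensional base $\mathcal{C}\divisor_{\underline{\iota},M}$ with irreducible fibers of constant dimension $N_M$. The one place you go beyond the paper's own write-up is in explicitly checking that distinct strata within a fixed $p$-rank stratum have incomparable closures; that argument is sound, modulo a harmless reversal of the direction of $\prec$ (a point of the $N$-stratum lying in the closure of the $M$-stratum forces $N \prec M$ rather than $M \prec N$, but either way the number of rows, hence $s_n$, would have to differ).
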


\begin{proof}
The image of $\ASW\cov_{\underline{\iota}, \underline{s}}$ under $B$ is the union of the strata $\mathcal{C}\divisor_{\underline{\iota},M}$ of $\mathcal{C}\divisor^1_{\underline{\iota}}$ with $s(M)=\underline{s}$ by Proposition \ref{propthemapb}. The stratum $\mathcal{C}\divisor_{\underline{\iota},M}$ is irreducible of dimension $s_n$ by Lemma \ref{lemmadimstratamatrix}.

For $M \in \Omega_{\underline{\iota}, \underline{s}}$, consider the morphism $B_M: \ASW\cov_{\underline{\iota}, M} \xrightarrow{} \mathcal{C}\divisor_{\underline{\iota}, M}$. The fiber of $B_M$ over a fixed system of divisors $\underline{D}$ is irreducible by Proposition \ref{propfiberofBM}. By Zariski's main theorem, $\ASW\cov_{\underline{\iota}, M}$ is irreducible since $B_M$ has irreducible fibers and image. Thus, the irreducible components of $\ASW\cov_{\underline{\iota}, \underline{s}}$ are the strata $\ASW\cov_{\underline{\iota}, M}$ with $M \in \Omega_{\underline{\iota}, \underline{s}}$.

The morphism $B_M$ is flat since all its fibers are isomorphic. Thus the dimension of $\ASW\cov_{\underline{\iota}, M}$ is the sum of the dimensions of $\mathcal{C}\divisor_{\underline{\iota},M}$ and of the fibers of $B_M$, which is equal to \eqref{eqndimasw} by Lemma \ref{lemmadimstratamatrix} and Proposition \ref{propfiberofBM}. 
\end{proof}

The below result then follows immediately from the above one and Lemma \ref{lemmacoverandcurve}.

\begin{corollary}
The dimension of $\ASW_{\underline{\iota},M}$ is
\begin{equation}
    s_n+ \sum_{i=1}^n\sum_{j=1}^r \bigg( {\iota_{j,i}}-1 - \bigg\lfloor \frac{\iota_{j,i}-1}{p} \bigg\rfloor \bigg)-3.
\end{equation}
\end{corollary}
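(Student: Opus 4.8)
The plan is to deduce this directly from the preceding theorem, which computes $\dim \ASW\cov_{\underline{\iota},M}$, together with Lemma~\ref{lemmacoverandcurve}, which provides the morphism $F: \ASW\cov_{\underline{\iota}} \to \ASW_{\underline{\iota}}$ whose fibers over geometric points all have dimension $3$. Since $F$ was noted to preserve the partition $M$, it restricts to $F_M: \ASW\cov_{\underline{\iota},M} \to \ASW_{\underline{\iota},M}$, and the first step is to check that this restriction is surjective: every Artin-Schreier-Witt curve $C/S$ with branching datum $M$ underlies some cover $\phi_n: C \to C/(\mathbb{Z}/p^n) \xrightarrow{\sim} \mathbb{P}^1_S$ (after choosing the isomorphism $\psi$, which is the only extra datum), so $F_M$ hits every geometric point of $\ASW_{\underline{\iota},M}$.

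Next I would invoke the fiber-dimension theorem for the dominant morphism $F_M$ between irreducible stacks of finite type over $k$. By the previous theorem $\ASW\cov_{\underline{\iota},M}$ is irreducible, hence so is its image $\ASW_{\underline{\iota},M}$; and since $F_M$ is surjective with all geometric fibers of dimension exactly $3$ (this equidimensionality of fibers is the content of Lemma~\ref{lemmacoverandcurve}, coming from the $3$-dimensional family of choices of $\psi$, equivalently the automorphisms of $\mathbb{P}^1$), the dimension formula
\begin{equation*}
\dim \ASW\cov_{\underline{\iota},M} = \dim \ASW_{\underline{\iota},M} + 3
\end{equation*}
holds. Rearranging and substituting \eqref{eqndimasw} gives
\begin{equation*}
\dim \ASW_{\underline{\iota},M} = s_n + \sum_{i=1}^n \sum_{j=1}^r \left( \iota_{j,i} - 1 - \left\lfloor \frac{\iota_{j,i}-1}{p} \right\rfloor \right) - 3,
\end{equation*}
which is the claim.

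I expect the only real subtlety to be the bookkeeping that $F$ genuinely restricts to a morphism $F_M$ with constant fiber dimension on the locally closed stratum, rather than merely on the whole space; but this is already asserted in the sentence introducing $F_M$ and $B_M$ (``the morphisms $F$ and $B$ preserve the partition $M$''), so it is legitimate to cite it. A second minor point is that dimension of stacks behaves well enough under the quasi-finite forgetful maps used in the construction that the additive relation across fibers is valid; since the fibers of $F_M$ are the $3$-dimensional homogeneous spaces $\mathrm{PGL}_2$-torsors appearing in the proof of Lemma~\ref{lemmacoverandcurve}, there is no jumping and the subtraction of $3$ is uniform. With these two remarks in place the corollary is immediate, so the proof is genuinely one line plus the citation, and I would present it as such.
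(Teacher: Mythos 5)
Your proposal is correct and follows exactly the route the paper takes: the corollary is deduced from the dimension formula \eqref{eqndimasw} for $\ASW\cov_{\underline{\iota},M}$ together with Lemma~\ref{lemmacoverandcurve}, subtracting the constant fiber dimension $3$ of $F_M$. The paper states this in one line, and your additional remarks on surjectivity and equidimensionality of the fibers are just the (correct) unpacking of that same argument.
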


\section{Deformation of Artin-Schreier-Witt covers and the geometry of \texorpdfstring{$\ASW_{\underline{\iota}}$}{} }
\label{secdeformation}

\subsection{Deformation of cyclic covers}

One approach to comprehending the geometry of $\ASW_{\underline{\iota}}$ is studying equal-characteristic deformations. In this section, we fixed a $n$-tuple $\underline{\iota}=(\iota_1, \ldots, \iota_n)$ for which $\Omega_{\underline{\iota}}$ is non-empty. For a given $M \in \Omega_{\underline{\iota}}$, we define the associated irreducible stratum as $\Gamma_M$, specifically $\Gamma_M:=\ASW_{\underline{\iota},M}$.

\begin{proposition}
Let $\phi_n$ be a cover that branches at $u$ points $P_1, \ldots, P_u$. Then it can be factored as
\begin{equation*}
\phi_n = \prod_{i=1}^u \phi_{n,i},
\end{equation*}
where each $\phi_{n,i}$ is a cyclic cover of $\mathbb{P}^1_k$ of degree dividing $p^n$ and branches only at $P_i$.
\end{proposition}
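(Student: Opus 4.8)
The plan is to work entirely on the level of Artin--Schreier--Witt classes and induct on $n$. Pick a reduced $\underline{f}=(f_1,\dots,f_n)\in W_n(k(x))$ representing $\phi_n$ (Proposition \ref{prop1to1ASW}); by Theorem \ref{theoremcaljumpirred} the pole locus of the $f_l$ is exactly $\mathscr{P}:=\{P_1,\dots,P_u\}$. In the language of characters the assertion is that $[\underline{f}]$ can be written as a sum $\sum_{j=1}^u[\underline{f}_j]$ with each $\underline{f}_j\in W_n(k(x))$ having all entries with poles only at $P_j$; the cover $\phi_{n,j}:=\mathfrak{K}_n(\underline{f}_j)$ is then cyclic of degree dividing $p^n$ and branches only at $P_j$. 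The case $n=1$ is just the partial fraction decomposition: if $f_1=c+\sum_{j=1}^u f_{1,j}$ with $c\in k$ and $f_{1,j}$ the principal part at $P_j$, then $[f_1]=\sum_j[f_{1,j}]$ in $k(x)/\wp(k(x))$, because $c\in\wp(k)$ ($k$ being algebraically closed).

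For the inductive step, apply the statement to the subcover $\phi_{n-1}=(\phi_n)^{p}$, obtaining cyclic covers $\phi_{n-1,j}$ of degree dividing $p^{n-1}$, branched only at $P_j$, with $\phi_{n-1}=\prod_j\phi_{n-1,j}$; fix for each $j$ a reduced representative $\underline{g}_j=(g_{1,j},\dots,g_{n-1,j})\in W_{n-1}(k(x))$, whose poles then coincide with the branch locus of $\phi_{n-1,j}$ and so lie in $\{P_j\}$. Put $\underline{G}:=\sum_{j=1}^u(g_{1,j},\dots,g_{n-1,j},0)\in W_n(k(x))$; since Witt addition is given by polynomials in the entries, every entry of $\underline{G}$ still has poles in $\mathscr{P}$, and the truncation of $\underline{w}:=\underline{f}-\underline{G}$ to $W_{n-1}(k(x))$ represents the zero class (it equals $(f_1,\dots,f_{n-1})-\sum_j\underline{g}_j$, and $[\phi_{n-1}]=\sum_j[\phi_{n-1,j}]$). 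Write that truncation as $\wp(\underline{z})$ for some $\underline{z}\in W_{n-1}(k(x))$. Then $\underline{z}$ automatically has poles in $\mathscr{P}$: if $\underline{z}$ had a pole at some $Q\notin\mathscr{P}$, the smallest-index entry of $\underline{z}$ with a pole at $Q$ would contribute, via its $p$-th power, a genuine pole at $Q$ to the corresponding entry of $\wp(\underline{z})$, all lower carries being regular there. Replacing $\underline{w}$ by the $\wp$-equivalent vector $\underline{w}-\wp((\underline{z},0))$ — still with poles in $\mathscr{P}$ — we may assume $\underline{w}$ truncates to $0$, i.e.\ $\underline{w}=(0,\dots,0,h)$ with $h\in k(x)$ having poles in $\mathscr{P}$. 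Decomposing $h=c+\sum_j h_j$ into a constant plus principal parts and using that $a\mapsto(0,\dots,0,a)$ is additive and sends the constant $c\in\wp(k)$ into $\wp(W_n(k(x)))$, we get $[\underline{w}]=\sum_j[(0,\dots,0,h_j)]$. Finally, since a summand supported in the top Witt coordinate leaves the lower entries unchanged, $(\underline{g}_j,0)+(0,\dots,0,h_j)=(g_{1,j},\dots,g_{n-1,j},h_j)=:\underline{f}_j$, and therefore $[\underline{f}]=[\underline{G}]+[\underline{w}]=\sum_{j=1}^u[\underline{f}_j]$, completing the induction.

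The crucial obstacle is that Witt vector addition is not performed coordinatewise, so one cannot simply split the partial fraction decomposition of $\underline{f}$ in each slot all at once; the induction circumvents this by pushing the ``new'' data at each level into the top Witt coordinate, where the Verschiebung is additive and partial fractions do split cleanly. The remaining point requiring care is the bookkeeping of poles: one must check that the auxiliary vector $\underline{z}$ and the residual function $h$ still have poles confined to $\mathscr{P}$, and this rests on the elementary observation that $\wp$ never shrinks the pole locus of a Witt vector over $k(x)$.
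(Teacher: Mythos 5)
Your proof is correct and takes essentially the same route as the paper: a partial-fraction splitting of a reduced representative in the base case, combined with the classical induction on the length of the Witt vector, which the paper only sketches and delegates to an external reference. Your write-up supplies the details that sketch omits --- in particular the pole-locus bookkeeping for $\underline{z}$ and $h$ via the observation that $\wp$ cannot shrink the pole locus, and the use of the additivity of $V^{n-1}$ to split the top coordinate.
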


\begin{proof}
    For the case $n=1$, the morphism $\phi_1$ can be represented by a rational function $g_1 \in k(x)$. Without loss of generality, we can assume that $g_1$ is in reduced form, which allows us to express it as
\begin{equation*}
g_1 = \sum_{i=1}^u \sum_{j=1}^{m_i} \frac{a_{i,j}}{(x-P_i)^j} = \sum_{i=1}^u f_{i,1},
\end{equation*}
where $a_{i,j} \in k$ and $a_{i,j}=0$ whenever $p \mid j$. It is clear that taking $\phi_{1,i}:=\mathfrak{K}_1(f_{i,1})$ satisfies the desired conditions. 

Now, let's consider the case $n>1$. We proceed by induction on Witt vectors as in the proof of Proposition \ref{propreducedform} to obtain the decomposition
\begin{equation*}
    (g_1, \ldots, g_n) = \sum_{i=1}^u (f_{i,1}, \ldots, f_{i,n}) =: \sum_{i=1}^u \underline{f}_i,
\end{equation*}
where each $\underline{f}_i$ only has $P_i$ as a pole. For further details, refer to \cite[Proposition 5.1]{2020arXiv201013614D}.
\end{proof}

\begin{remark}
The stalk $\phi_{n,P_i}$ of $\phi_n$ at $P_i$ corresponds to a cyclic extension of the ring of power series over $k$. It can be shown that $\phi_{n,P_i}$ is precisely given by $\phi_{n,i}$, that is, $\phi_{n,P_i}=\mathfrak{K}_n(\underline{f}_i)$.
\end{remark}

\begin{proposition}
\label{proplocaltoglobal}
Let $M,N \in \Omega_{\underline{\iota}}$. Suppose $\phi_n \in \Gamma_N$ as described in the previous proposition. Then $\phi_n$ can be deformed to an element in $\Gamma_N$ if and only if each local extension $\phi_{n,P_i}$ can be deformed to an element in $\Gamma_{N_i}$, where $(N_i)_{i=1}^u$ is a partition of $N$. In particular, a $\mathbb{Z}/p^n$ cover with branching datum $M$ can be deformed to $\Phi_n$ with branching datum $N$ only if $M \prec N$.
\end{proposition}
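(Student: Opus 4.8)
The plan is to reduce the global deformation problem to a local one via the factorization $\phi_n = \prod_{i=1}^u \phi_{n,i}$ established in the previous proposition, and then control the branching datum under deformation by a semicontinuity (or rather, specialization) argument on the ramification invariants, combined with the combinatorial structure of the ordering $\prec$ on $\Omega_{\underline{\iota}}$.

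\textbf{Forward direction.} First I would observe that a deformation of $\phi_n$ over a complete local ring $R$ (say $R = k[[t]]$) with generic fiber in $\Gamma_N$ and special fiber $\phi_n$ induces, by pulling back along each of the $u$ horizontal sections through the branch points $P_1, \ldots, P_u$ (which may collide or split in the deformation, but after a base change we can track components of the branch locus), a deformation of each stalk $\phi_{n,P_i}$. The key point is that the branch divisor of the deformation is itself a relative Cartier divisor on $\mathbb{P}^1_R$ (this is part of the moduli problem for $\mathcal{ASW}\cov_{\underline{\iota}}$), so the branch locus on the special fiber is the specialization of the branch locus on the generic fiber; the $u$ points $P_i$ of the special fiber are exactly the images of the (possibly fewer) branch points of the generic fiber, each $P_i$ receiving some subset of them. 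Grouping the generic branch points by which $P_i$ they specialize to, the portion of the deformation concentrated near $P_i$ is a deformation of $\phi_{n,P_i}$ whose generic fiber branches at those points; this cuts out a partition $(N_i)_{i=1}^u$ of $N$, and the local deformation lands in $\Gamma_{N_i}$ by construction.

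\textbf{Reverse direction.} Conversely, given deformations $\phi_{n,P_i} \rightsquigarrow \psi_{i}$ with $\psi_i \in \Gamma_{N_i}$, I would assemble them: since the $\phi_{n,i}$ have disjoint branch loci and $\phi_n = \prod_i \phi_{n,i}$ as characters (using the product structure on $\cohom^1(G_K,\mathbb{Z}/p^n)$ discussed after Proposition \ref{propabelianpncorrespondence}), and since deformations of the stalks can be spread out to deformations of global covers $\tilde\phi_{n,i}$ of $\mathbb{P}^1_k$ branched only near $P_i$ (this is the local-to-global passage, and the relevant tool is precisely \cite[Proposition 5.1]{2020arXiv201013614D} cited above, which identifies deformations of the stalk with deformations of $\mathfrak{K}_n(\underline f_i)$), their product $\prod_i \tilde\phi_{n,i}$ is a deformation of $\phi_n$. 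Its generic fiber has branching datum the concatenation of the rows coming from the $N_i$, which is exactly $N$ (up to reordering rows, which is the content of $\Omega_{\underline{\iota}}$ being taken modulo row permutations), so it lies in $\Gamma_N$.

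\textbf{The ``in particular'' clause.} For the final assertion, suppose $\phi_n$ has branching datum $M$ and deforms to $\Phi_n$ with branching datum $N$. By the forward direction, each stalk $\phi_{n,P_i}$ — which is a one-branch-point cover with conductor data equal to the row $r_{\sigma(i)}(M)$ of $M$ — deforms to something with conductor data $N_i$, where $(N_i)_i$ partitions $N$. But a deformation of a one-branch-point cover can only split its single branch point into several, and the ramification breaks satisfy a semicontinuity: the conductor of the specialization at $P_i$ dominates (in the partition sense of Section \ref{secpartitionmatrix}) the sum of conductors of the generic branch points above it. Concretely, each row $r_i(M)$ of $M$ must be a partition of the corresponding block $N_i$ of rows of $N$, which is exactly the statement $M \prec N$ by the definition of $\prec$. \textbf{The main obstacle} I anticipate is making precise the semicontinuity of the ramification filtration in families — i.e., that the $i$-th conductor of the stalk dominates the sum over the branch points it splits into — and ensuring the branch-locus bookkeeping (which generic branch point specializes to which $P_i$, and the compatibility of the row orderings in $\Omega_{\underline{\iota}}$) is done coherently after the necessary finite flat base changes; this is where one must invoke the Cartier-divisor structure of the branch locus in $\C\divisor_{\underline{\iota}}$ together with the explicit conductor formula of Theorem \ref{theoremcaljumpirred} on the generic fiber.
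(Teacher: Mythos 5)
Your overall architecture matches the paper's proof: localize the deformation at each branch point of the special fiber for the forward direction, glue local deformations via the additive structure on Witt vectors (the paper writes $\Phi_n=\mathfrak{K}_n(\sum_i \underline{F}_i)$) for the converse, and deduce $M\prec N$ from a constraint on how conductors behave under specialization. The one step you explicitly leave open — the ``semicontinuity of the ramification filtration in families'' — is, however, the entire mathematical content of the ``in particular'' clause, and the tool you reach for (the conductor formula of Theorem \ref{theoremcaljumpirred} on the generic fiber plus the Cartier-divisor structure of the branch locus) will not close it. Theorem \ref{theoremcaljumpirred} computes conductors of each fiber separately; it says nothing about how the generic and special fibers are related.

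What the paper actually invokes is not a semicontinuity (an inequality) but an exact conservation law: the vanishing cycle formula \cite{MR904945} (equivalently, the local criterion for good reduction; cf.\ \cite[Fact 2.3]{MR3194816}), applied to the completion of $\Phi_i$ at each $P_j$, says that for a flat equal-characteristic deformation with smooth special fiber the Swan conductor of $\phi_{i,P_j}$ \emph{equals} the sum of the Swan conductors of the generic branch points specializing to $P_j$. Feeding this equality into \eqref{eqndifferent} for each level $i=1,\dots,n$ (i.e., for each subcover $\Phi_i=(\Phi_n)^{p^{n-i}}$, which is itself a deformation of $\phi_i$) and inverting the triangular system $\sw=\sum_l e_{j,l}(p^l-p^{l-1})$ yields equality of the $i$-th column sums over each specialization class, which is precisely the definition of $M\prec N$. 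A one-sided inequality could in principle be upgraded to equality using that $M,N\in\Omega_{\underline{\iota}}$ share the global column sums $\underline{d}$, but you neither prove the inequality nor make that bootstrapping argument, so as written the key assertion is assumed rather than established. The forward and reverse directions of the equivalence itself are otherwise in line with the paper's argument.
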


\begin{proof}
``$\Rightarrow$'' We consider $\phi_n$ as a character in $\cohom^1(G_K, \mathbb{Z}/p^n)$, where $K=k(x)$. In order for a $\mathbb{Z}/p^n$-cover $\Phi_n \in \cohom^1(G_{\Frac(R)(x)}, \mathbb{Z}/p^n)$ to be a deformation of $\phi_n$ over a complete discrete valuation ring $R$ in characteristic $p$ with residue field $k$, it is necessary for the sub-$\mathbb{Z}/p^j$-cover $\Phi_i=(\Phi_n)^{p^{n-i}}$ to be a deformation of $\phi_i:=(\phi_n)^{p^{n-i}}$ for $i=1, \ldots, n-1$.

Let $\mathscr{P}=\{ P_1, \ldots, P_u \} \subset \mathbb{P}^1_k$ be the branch locus of $\phi_i$ corresponding to the rows $r_1(M^{p^{n-i}}), \ldots, \allowbreak r_u(M^{p^{n-i}})$ of the matrix $M^{p^{n-i}}$, where $r_j(M)=[M_{j,1},\ldots, M_{j,n}]\in \mathbb{Z}^n$. Now, let $\mathfrak{P}_{j_1}, \ldots, \mathfrak{P}_{j_s} \in \mathbb{P}^1_R$ be the branch points of $\Phi_i$ specializing to $P_j$, which correspond to the rows $j_1, \ldots, j_s$ of $N^{p^{n-i}}$. These points form an $s \times i$ sub-matrix $N_{P_j}^{p^{n-i}}:=[r_{j_1}(N^{p^{n-i}}), \ldots, \allowbreak r_{j_s}(N^{p^{n-i}})]^{\intercal}$ of $N^{p^{n-i}}$. The completion of $\Phi_i$ at $P_j$ induces a local deformation $\Phi_{i, P_j}$ of $\phi_{i,P_j}$, which generically branches at $\mathfrak{P}_{j_1}, \ldots, \mathfrak{P}_{j_s}$ and has branching datum $N^{p^{n-i}}_{P_j}$. According to the vanishing cycle formula \cite{MR904945}, for each $1 \leq i \leq n$, the generic fiber of $\Phi_{i,P_j}$ and $\phi_{i,P_j}$ have the same Swan conductor and hence the same sum of conductors, as indicated by (\ref{eqndifferent}). Consequently, we have the relation
\begin{equation*}
[M_{j,1},\ldots, M_{j,i}] \prec N^{p^{n-i}}_{P_j},
\end{equation*}
which implies that $M \prec N$.

``$\Leftarrow$'' Let's assume that $\Phi_{n,P_i}$ is a deformation of $\phi_{n,P_i}$ over $R$. This deformation can be extended to a deformation $\Phi_{n,i}$ of $\phi_{n,i}$, represented by $\underline{F}_i:=(F_{i,1}, \ldots, F_{i,n}) \in W_n(\Frac(R)(x))$. By following the same reasoning as before, we can deduce that $\Phi_{n,i}$ has the type $[M_{j,1},\ldots, M_{j,n}] \prec N_{P_j}$, where $N_{P_j}$ is a sub-matrix of $N$. Finally, it can be shown that $\Phi_n=\mathfrak{K}_n(\sum_{i=1}^u \underline{F}_{i})$ deforms $\phi_n$.
\end{proof}

Therefore, our focus can be narrowed down to studying the deformations of local extensions, specifically extensions of the ring of power series over $k$.

\begin{definition}
    We say a deformation from a point in $\Gamma_{M}$ to one in $\Gamma_{N}$  has \emph{type} $M \xrightarrow{} N$.
\end{definition}

\begin{example}
\label{examplenonossdeformation}
Let's consider a $\mathbb{Z}/4$-cover $\chi_2$ of $\mathbb{P}^1_{k[[t]]}$ (where $p=2$) defined by
\begin{equation}
\label{order4deformation}
\wp(Y_1,Y_2)=\left(\frac{1}{x^2(x-t^4)}, \frac{1}{x^3(x-t^4)^2(x-t^2)^2}\right).
\end{equation}
Its special fiber $\overline{\chi}_2$ is birationally equivalent to
\[ \wp(y_1,y_2)=\bigg( \frac{1}{x^3}, \frac{1}{x^7} \bigg). \]
Adding $\wp(1/(t^2x), 1/(t^4x^2)+ 1/(t^5(t^2-1)(x-t^2))$ to the right-hand-side of (\ref{order4deformation}) brings it to a Witt vector with the first entry $(t^6-1)/(t^8x)+1/(t^8(x-t^4))$ and the second entry
\begin{equation*}
    \frac{t^8+t^6+t^2+1}{(t^2-1)t^{16}x^2}+ \frac{t^4+1}{t^{16}x}+\frac{1}{t^{16}(t^2-1)^2(x-t^4)^2} + \frac{t^8+t^6+t^4+1}{t^{16}(t^2-1)^3(x-t^4)}+\frac{t^{11}+t^7+t^2+1}{(t^2-1)^3t^{12}(x-t^2)}.
\end{equation*}
Therefore, due to Theorem \ref{theoremcaljumpirred}, generic fiber has upper jumps $(1,2)$ at $0$, $(1,2)$ at $t^4$, and $(0,1)$ at $t^2$. Hence, $\chi_2$ is a deformation of $\overline{\chi}_2$ of type
\[M:= \begin{bmatrix}
4 & 8 \\
\end{bmatrix} \xrightarrow{}  \begin{bmatrix}
   2 & 3 \\
   2 & 3 \\
   0 & 2\\
   \end{bmatrix}=:N. \] 
Note that the $\mathbb{Z}/2$-sub-cover $\chi_1$ is a deformation with a type of $[4] \to [2,2]^{\top}$. 
\end{example}

From this point onwards, our focus will be on studying deformations of type $[M_{1,1}, \ldots, M_{1, n}] \to N$, where $M_{1,1} \neq 0$ and $N$ has more than one row. 

In the following section, we introduce a powerful technique for determining the existence of deformations of a specific type. However, this technique will be directly applied in only a few selected examples throughout this paper.

\subsection{The Hurwitz tree technique} 

Let $\chi \in \cohom^1(G_{\kappa}, \mathbb{Z}/p^n)$ be a one-point cover. The \textit{Hurwitz tree} is a valuable tool for investigating the possibility of deforming $\chi$ with a given type. It is a combinatorial-differential object that takes the shape of the dual graph of a cover's semistable model. Each edge is assigned a \textit{thickness} corresponding to the associated annulus. Additionally, each vertex of the tree encodes the degeneration data of the restriction of $\chi$ to the corresponding closed subdisc, measured by the \textit{refined Swan conductors}. These conductors are described by $2$-tuples of the form $(\delta, \omega)$, where:
\begin{itemize}
    \item $\delta \in \mathbb{Q}_{\ge 0}$ is the \textit{depth Swan conductor},
    \item For $\delta > 0$, $\omega \in \Omega^1_{\kappa}$ is the \textit{differential Swan conductor},
    \item For $\delta = 0$, $\omega \in W_n(\kappa)$ is the \textit{reduction type}.
\end{itemize}

This information not only determines whether the associated cover has good reduction but also identifies the precise locations of the singularities \cite{MR904945}. The concept was initially formulated by Henrio \cite{2000math.....11098H} to address the lifting problem for $\mathbb{Z}/p$ in mixed characteristic. Subsequently, Bouw, Brewis, and Wewers made improvements to the technique \cite{MR2254623}, \cite{MR2534115}. In \cite{2020arXiv201013614D}, the first author extended the concept to the equal-characteristic setting, enabling the use of combinatorics and differential equation tools to study Artin-Schreier-Witt deformations. For the purposes of this manuscript, we will adopt this extended definition.

\begin{example}
\label{examplenonossdeformationHurwitz}
In Example \ref{examplenonossdeformation}, the Hurwitz tree $\mathcal{T}_1$ associated with $\chi_1$ and the Hurwitz tree $\mathcal{T}_2$ associated with $\chi_2$ have the forms presented in Figure \ref{figexamplenonossdeformationtrees}. The thickness of both $e_0$ and $e_1$ is equal to $1$. Additionally, the refined Swan conductors at the vertices is given in Table \ref{tab:degenerationdataexamplenonossdeformation}. The computation of the conductors is straight from the represented Witt vectors thanks to \cite[Proposition 2.8]{MR3726102}.

\tikzstyle{level 1}=[level distance=1.2cm, sibling distance=3.9cm]
\tikzstyle{level 2}=[level distance=1.2cm, sibling distance=2.1cm]
\tikzstyle{level 3}=[level distance=0.8cm, sibling distance=0.9cm]
\tikzstyle{level 4}=[level distance=1.2cm, sibling distance=1cm]
\tikzstyle{bag} = [text width=11.5em, text centered]
\tikzstyle{end} = [circle, minimum width=3pt,fill, inner sep=0pt]
\begin{figure}[ht]
\begin{subfigure}{.5\textwidth}
  \centering
\begin{tikzpicture}[grow=up, sloped]
\node[end, label=left:{$v_0$}]{}
child{
        node[end,label=left:
                    {$v_1$}]{}
    child {
                node[end, label=above:
                    {$0[t^2]$}] {}
                edge from parent
            }
    child {
        node[end,label=left:
                    {$v_2$}]{}        
            child {
                node[end, label=right:
                    {$ 2[t^{4}]$}] {}
                edge from parent
            }
            child {
                node[end, label=left:
                    {$ 2[0]$}] {}
                edge from parent
            }
            edge from parent 
            node[above] {$e_1$}
            node[below]  {$1$}
    }
    edge from parent
    node[above] {$e_0$}
    node[below] {$1$}
    };
\end{tikzpicture}
  \caption{$\mathcal{T}_1$}
  \label{figexamplenonossdeformationtree1}
\end{subfigure}%
\begin{subfigure}{.5\textwidth}
  \centering
\begin{tikzpicture}[grow=up, sloped]
\node[end, label=left:{$v_0$}]{}
child{
        node[end,label=left:
                    {$v_1$}]{}
    child {
                node[end, label=above:
                    {$2[t^2]$}] {}
                edge from parent
            }
    child {
        node[end,label=left:
                    {$v_2$}]{}        
            child {
                node[end, label=right:
                    {$ 3[t^{4}]$}] {}
                edge from parent
            }
            child {
                node[end, label=left:
                    {$ 3[0]$}] {}
                edge from parent
            }
            edge from parent 
            node[above] {$e_1$}
            node[below]  {$1$}
    }
    edge from parent
    node[above] {$e_0$}
    node[below] {$1$}
    };
\end{tikzpicture}
  \caption{$\mathcal{T}_2$}
  \label{figexamplenonossdeformationtree2}
\end{subfigure}%
\caption{Hurwitz trees in Example \ref{examplenonossdeformation}}
\label{figexamplenonossdeformationtrees}
\end{figure}

\begin{table}[ht]
    \centering
\begin{tabular}{ |p{1.8cm}|p{3.7cm}|p{3.7cm}|p{3.7cm}|  }
\hline
Vertices & $\mathcal{T}_1$ & $\mathcal{T}_2$  \\
\hline
\hline
$v_0$ & $\big( 0, \frac{1}{x^{3}} \big)$ & $\big( 0, \big(\frac{1}{x^{3}}, \frac{1}{x^7} \big) \big) $\\
\hline
$v_1$ & $\big( 3, \frac{dx}{x^4} \big)$ & $\big( 7, \frac{dx}{x^{6}(x-1)^{2}} \big)$\\
\hline
$v_2$ & $\big( 6, \frac{dx}{x^{2}(x-1)^{2}} \big)$ & $\big( 12, \frac{dx}{x^{3}(x-1)^{3}} \big)$  \\
\hline
\end{tabular}
    \caption{Degeneration data of the trees in Figure \ref{figexamplenonossdeformationtrees}}
    \label{tab:degenerationdataexamplenonossdeformation}
\end{table}
\end{example}

When $n=1$, the deformation of Artin-Schreier covers in equal-characteristic is completely determined by these trees.

\begin{proposition}[{\cite{2020arXiv200203719D}}]
There exists a deformation of type $[e] \rightarrow [e_1, \ldots, e_s]^{\top}$ if and only if there exists a Hurwitz tree of the same type.
\end{proposition}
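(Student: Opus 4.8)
The statement asserts an equivalence between the existence of an equal-characteristic deformation of type $[e] \to [e_1, \ldots, e_s]^{\top}$ and the existence of a Hurwitz tree of the same type. The plan is to treat the two implications separately, with the Hurwitz tree functioning as the combinatorial-differential bookkeeping device that records the semistable reduction of a deformation.

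For the implication ``$\Rightarrow$'', I would start with a deformation $\Phi_1$ of $\phi_1$ over a complete DVR $R$ in equal characteristic $p$ with residue field $k$; by the local-to-global principle (Proposition \ref{proplocaltoglobal} and the discussion following it) it suffices to treat the one-point local case, so $\phi_1 \in \cohom^1(G_\kappa, \mathbb{Z}/p)$ branches at a single point and $\Phi_1$ generically branches at the $s$ points specializing to it. The construction then proceeds by passing to a stable model of $\Phi_1$ over (a finite extension of) $R$: the dual graph of the special fiber of this model, equipped with the thicknesses of the annuli corresponding to its edges and the refined Swan conductors $(\delta, \omega)$ of the restriction of $\Phi_1$ to each closed disc, is exactly a Hurwitz tree of the required type. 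The key inputs here are the vanishing cycle / refined Swan conductor formalism of \cite{MR904945} (already invoked in the proof of Proposition \ref{proplocaltoglobal}), which guarantees that the local conductors at the leaves reproduce the data $e_1, \ldots, e_s$, and that the tree axioms (the differential/depth compatibility at each vertex) are automatically satisfied by a genuine semistable model.

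For the implication ``$\Leftarrow$'', given an abstract Hurwitz tree $\mathcal{T}$ of type $[e] \to [e_1, \ldots, e_s]^{\top}$, I would reconstruct a deformation by building, disc by disc along the tree, a compatible system of $\mathbb{Z}/p$-torsors over the generic fibers of the corresponding affinoids, glued along the annuli prescribed by the edges with the thicknesses recorded in $\mathcal{T}$. Concretely, one uses the differential data $\omega$ at each vertex to write down an explicit Artin-Schreier equation on each piece and checks, using the depth Swan conductors and the matching conditions built into the definition of a Hurwitz tree, that these equations patch to a torsor over a semistable curve over $R$ whose special fiber is (birational to) $\phi_1$ and whose generic fiber is smooth with the prescribed branch behavior. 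This is essentially the content of \cite{2020arXiv200203719D}, so in the write-up I would cite that construction rather than reproduce it.

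The main obstacle is the ``$\Leftarrow$'' direction: turning purely combinatorial-differential data into an actual flat family requires controlling the obstruction to patching the local torsors across the annuli — this is where the differential Swan conductor being an \emph{exact} (or suitably normalized) differential, and the depth being consistent with the edge thicknesses, really matters. The ``$\Rightarrow$'' direction is comparatively formal once one has semistable reduction and the compatibility of refined Swan conductors with specialization. Since the full argument for both directions is carried out in \cite{2020arXiv200203719D}, the proof in this paper should consist of recalling the correspondence and pointing to that reference; I would keep the exposition here brief and defer the technical core.
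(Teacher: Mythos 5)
The paper offers no proof of this statement at all: it is quoted directly from the cited reference \cite{2020arXiv200203719D}, and your proposal correctly recognizes this and defers the technical core to that work, which is exactly what the paper does. Your sketch of the two directions (extracting the tree from a semistable model of a given deformation, and conversely patching local Artin-Schreier torsors along the tree's annuli to build a deformation) is consistent with the Hurwitz tree formalism the paper describes, so the proposal is correct and takes essentially the same approach.
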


When $n>1$, the conditions for the existence are more subtle, as illustrated in the following theorem.

\begin{theorem}
    There exists a deformation of type $M \xrightarrow{} N$, where $M \in \mathbb{M}_{1 \times n}(\mathbb{Z}_{\ge 0})$ only if there exists $n$ Hurwitz trees $\mathcal{T}_1, \ldots, \mathcal{T}_n$ that satisfy the following conditions:
    \begin{enumerate}
        \item $\mathcal{T}_i$ has type $M^{p^{n-i}} \xrightarrow{} N^{p^{n-i}}$ and
        \item $\mathcal{T}_i$ extends $\mathcal{T}_{i-1}$ for $i=2, \ldots, n$.
    \end{enumerate}
\end{theorem}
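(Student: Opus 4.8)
The plan is to extract the $n$ Hurwitz trees directly from the semistable reduction of the given deformation, reading off one tree for each subquotient of the $\mathbb{Z}/p^n$-tower, and then to check that the compatibility of these semistable models forces the trees to extend one another. First I would reduce to subquotients. Suppose $\Phi_n$, over a complete discrete valuation ring $R$ in characteristic $p$ with residue field $k$, is a deformation of type $M\to N$ of the one-point cover $\phi_n$. For $1\le i\le n$ put $\Phi_i:=(\Phi_n)^{p^{n-i}}$ and $\phi_i:=(\phi_n)^{p^{n-i}}$. By the remark following Proposition~\ref{propcorrespondencematrixcover} the branching datum of $\phi_i$ is $M^{p^{n-i}}$ and that of $\Phi_i$ is $N^{p^{n-i}}$, and by the argument in the ``$\Rightarrow$'' part of Proposition~\ref{proplocaltoglobal} each $\Phi_i$ is itself a deformation, hence one of type $M^{p^{n-i}}\to N^{p^{n-i}}$. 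After a finite extension of $R$ I may assume that, for every $i$, all branch points of $\Phi_i$ are rational over $\Frac(R)$, that coordinates are chosen so that $\infty$ is not among them, and that they all specialise to the single point $P_1\in\mathbb{P}^1_k$ above which $\phi_n$ is ramified.

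Next I would attach to each $\Phi_i$ its Hurwitz tree $\mathcal{T}_i$ by the construction of \cite{2020arXiv201013614D} (the equal-characteristic counterpart of \cite{2000math.....11098H,MR2254623,MR2534115}): take the minimal semistable model $\mathcal{X}_i\to\spec R$ of $\mathbb{P}^1_R$ separating the branch points of $\Phi_i$ on the special fibre, let $\mathcal{T}_i$ be the dual graph of the special fibre of $\mathcal{X}_i$ with its natural root (the subdisc around $P_1$), assign to each edge the thickness of the corresponding annulus, and assign to each vertex the refined Swan conductor $(\delta,\omega)$ of the restriction of the Galois character of $\Phi_i$ to the closed subdisc it represents, computed by Kato's theory \cite{MR904945,MR3726102}. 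By construction $\mathcal{T}_i$ has type $M^{p^{n-i}}\to N^{p^{n-i}}$: its leaves carry the germs of $\Phi_i$ at the generic branch points, whose conductors are the rows of $N^{p^{n-i}}$ by Theorem~\ref{theoremcaljumpirred}, and its root carries the germ of $\phi_i$ at $P_1$, whose conductors form the single row $M^{p^{n-i}}$. That this decorated tree actually satisfies the Hurwitz-tree axioms --- the numerical relations among thicknesses, depths, and the divisor of $\omega$ along each edge, together with the multilevel differential conditions when $i>1$ --- is exactly the content of \cite{2020arXiv201013614D}, which I would invoke rather than reprove.

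Finally I would verify condition (2), that $\mathcal{T}_i$ extends $\mathcal{T}_{i-1}$ for $2\le i\le n$. Since $\Phi_{i-1}=(\Phi_i)^p$ is the $\mathbb{Z}/p$-subquotient of $\Phi_i$, its branch locus is contained in that of $\Phi_i$, so the model $\mathcal{X}_i$ already separates the branch points of $\Phi_{i-1}$; consequently the underlying tree of $\mathcal{T}_{i-1}$ is obtained from that of $\mathcal{T}_i$ by contracting the edges ending in a leaf which is not a branch point of $\Phi_{i-1}$, and the thicknesses agree on the edges that survive. On each common subdisc the character of $\Phi_{i-1}$ is the length-$(i-1)$ truncation of the character of $\Phi_i$, so its refined Swan conductor is the image of $(\delta,\omega)$ under the corresponding truncation operation. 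This is precisely the ``extension'' relation of \cite{2020arXiv201013614D}, so condition (2) holds and the proof is complete.

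I expect the main obstacle to be purely definitional bookkeeping in the second and third steps: checking that the object produced by the semistable model matches the axiomatics of a Hurwitz tree in \cite{2020arXiv201013614D} verbatim, in particular the behaviour of Kato's refined Swan conductor $(\delta,\omega)$ when one passes from a disc to a subdisc --- so that the divisor of $\omega$ correctly governs the change of depth across an annulus of prescribed thickness --- and the fact that truncating the representing Witt vector commutes with restriction to subdiscs, which is what makes the ``extends'' relation immediate. Everything else is a direct translation of the semistable-reduction picture, and the one genuinely new point beyond the $n=1$ case is the multilevel compatibility, which here is automatic since all the $\Phi_i$ are subquotients of the single deformation $\Phi_n$ and can be treated on one common semistable model.
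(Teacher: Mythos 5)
The paper itself offers no proof of this theorem: it is quoted from the first author's prior work \cite{2020arXiv201013614D}, with the notions of ``type'' and ``extension'' and the key compatibility statement all deferred to that reference. Your outline --- pass to the subquotients $\Phi_i=(\Phi_n)^{p^{n-i}}$, build each $\mathcal{T}_i$ from a common semistable model by decorating the dual graph with thicknesses and refined Swan conductors, and then check the extension relation --- is the construction that reference carries out, so as a reconstruction of the intended argument it is on target.

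The one place where your sketch understates the difficulty is step three. You describe the verification that $\mathcal{T}_i$ extends $\mathcal{T}_{i-1}$ as ``definitional bookkeeping'': since $\Phi_{i-1}$ is a truncation of $\Phi_i$, its refined Swan conductor at each subdisc should be ``the image of $(\delta,\omega)$ under the corresponding truncation operation.'' But the extension relation is not defined by truncation; it is defined by explicit numerical and differential compatibilities between the conductors of $\chi$ and of $\chi^p$ at each shared vertex, and these are a theorem, not a formality. Concretely (see the examples in this section and \cite[Theorem 3.4.2]{2020arXiv201013614D}): if $\delta_i(v)=p\,\delta_{i-1}(v)$ then the Cartier operator applied to $\omega_i(v)$ must return $\omega_{i-1}(v)$, while if $\delta_i(v)>p\,\delta_{i-1}(v)$ then $\omega_i(v)$ is forced to be exact and $\delta_{i-1}(v)$ is determined by $\delta_i(v)$ alone --- this is exactly the dichotomy exploited in the $p=5$ example to rule out a deformation of type $[7\ 35]\to[\begin{smallmatrix}3&17\\4&18\end{smallmatrix}]$. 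There is no ``truncation operation'' on pairs $(\delta,\omega)$ that produces the conductor of $\chi^p$ from that of $\chi$ in a formal way; the relation depends on which case one is in, and establishing that the pair $(\Phi_{i-1},\Phi_i)$ always lands in one of the allowed cases is the substantive content being imported. Your proof would be complete once you replace the truncation remark with an explicit appeal to that result on the behaviour of refined Swan conductors under $\chi\mapsto\chi^p$; everything else in your outline is sound.
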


The concepts of type and tree extension are introduced in \cite[Definition 4.2 and Definition 4.6]{2020arXiv201013614D}, respectively. Notably, a tree $\mathcal{T}_i$ is an extension of $\mathcal{T}_{i-1}$ only if the Swan conductors of the shared vertices satisfy the conditions stated in \cite[Theorem 3.4.2]{2020arXiv201013614D}.

\begin{example}
In Example \ref{examplenonossdeformationHurwitz}, it can be verified that $\mathcal{T}_2$ is an extension of $\mathcal{T}_1$. For instance, at the vertex $v_2$, the depth Swan conductor in $\mathcal{T}_2$ is precisely twice that in $\mathcal{T}_1$, which implies that the Cartier operator of the differential Swan conductor in $\mathcal{T}_2$ is equal to that in $\mathcal{T}_1$.
\end{example}

\begin{example}
    Let's consider the case where $p=5$. Assume $\chi_2$ represents a deformation of a cover of $\mathbb{P}^1_k$ with the given type:
    \[\begin{bmatrix}
    7 & 35 \\
    \end{bmatrix} \xrightarrow{}  \begin{bmatrix}
   3 & 17 \\
   4 & 18 \\
   \end{bmatrix}. \] 
   Set $\chi_1:=\chi_2^5$. Then the associated trees $\mathcal{T}_1$ and $\mathcal{T}_2$ must have the following forms, where $\epsilon$ represents the thickness of $e_0$.
       \tikzstyle{level 1}=[level distance=1.2cm, sibling distance=3.9cm]
\tikzstyle{level 2}=[level distance=0.8cm, sibling distance=1.5cm]
\tikzstyle{bag} = [text width=11.5em, text centered]
\tikzstyle{end} = [circle, minimum width=3pt,fill, inner sep=0pt]
\begin{figure}[ht]
\begin{subfigure}{.5\textwidth}
  \centering
\begin{tikzpicture}[grow=up, sloped]
\node[end, label=left:{$v_0$}]{}
child{
        node[end, label=left:{$v_1$}]{}
    child {
                node[end, label=right:
                    {$3[P_1]$}] {}
                edge from parent
            }
    child {
                node[end, label=left:
                    {$4[P_2]$}] {}
                edge from parent
            }
    edge from parent
    node[above] {$e_0$}
    node[below] {$\epsilon$}
    };
\end{tikzpicture}
  \caption{$\mathcal{T}_1$}
\end{subfigure}%
\begin{subfigure}{.5\textwidth}
  \centering
\begin{tikzpicture}[grow=up, sloped]
\node[end, label=left:{$v_0$}]{}
child{
        node[end, label=left:{$v_1$}]{}
    child {
                node[end, label=right:
                    {$17[P_1]$}] {}
                edge from parent
            }
    child {
                node[end, label=left:
                    {$18[P_2]$}] {}
                edge from parent
            }
    edge from parent
    node[above] {$e_0$}
    node[below] {$\epsilon$}
    };
\end{tikzpicture}
  \caption{$\mathcal{T}_2$}
\end{subfigure}%
\end{figure}

The criteria for Hurwitz trees assert that the depth at $v_1$ in $\mathcal{T}_1$ is $\delta_1(v_1) = 6\epsilon$, while the depth at $v_1$ in $\mathcal{T}_2$ is $\delta_2(v_1) = 34\epsilon$. Observe that $\delta_2(v_1)$ exceeds $5\delta_1(v_1)$. According to \cite[Theorem 3.4.2 (ii) (b)]{2020arXiv201013614D}, for $\mathcal{T}_1$ to be an extension of $\mathcal{T}_2$, the differential form at $v_1$ in $\mathcal{T}_2$ must be exact.

Hence, this implies the existence of an exact differential form $\omega = \frac{dx}{x^{17}(x-a)^{18}}$, where $a \in k^{\times}$, represents an element in the non-zero set of the field $k$. However, applying the Gr{\"o}bner Basis technique \cite[\S 6.1]{2020arXiv200203719D} leads to the conclusion that $\omega$ can never be exact. Consequently, achieving a deformation of the desired type is not possible.
\end{example}

\subsection{Essential jumps and a natural family of deformations}
\label{secessentialdeform}
Similar to the scenario with Artin-Schreier curves \cite[\S 4.1]{MR2985514}, a collection of non-trivial yet ``natural'' deformations exists in this context.

Suppose $\overline{\chi} \in \cohom^1(G_\mathbb{K}, \mathbb{Z}/p^n)$ gives a $\mathbb{Z}/p^n$-cover that branches at a single point with higher ramification jumps $\iota_1<\iota_2<\ldots<\iota_n$. Let $\iota_0=0$. For each $1 \le j \le n$, we can express $\iota_j -p\iota_{j-1}=pq_j+\epsilon_j$, where $0 \le q_j$ and $0 \le \epsilon_j <p$. It follows that $0< \epsilon_j$ if and only if $(p, \iota_j)=1$ if and only if $p \iota_{j-1}<\iota_j$. The term $q_j$ is referred to as the \textit{essential part} of the upper jump at position $j$, and if $0<q_j$, we say that $\iota_j$ is an \textit{essential upper jump}. Let $q:=1+\sum_{j \text{ essential}} q_j$. According to Pop, it is possible to partition $(\iota_1+1, \ldots, \iota_{n}+1)$ into an $q \times n$ matrix $M:=(e_{j,i})$ with no essential parts as follows:
\begin{enumerate}
    \item $e_{1,i}=pe_{1,i-1}+\epsilon_i$ for $1 \le i \le n$, $e_{1,0}=0$,
    \item Add $q_j$ number of rows as follows to $M$ for each essential place $j$
    \begin{equation}
    \label{eqnrowqj}
        [0, \ldots, 0, p-1, p^2-1, \ldots, p^{n-j+1}-1].
    \end{equation}
\end{enumerate}
Pop demonstrates the existence of a deformation of type $M$, which we call a deformation of \textit{Pop type}. We restate his result here, adapting it to the conventions used in this paper.

\begin{lemma}[{\cite[Key Lemma 3.2]{MR3194816}}]
\label{lemmapopdeformation}
Let $A=k[[x]] \xhookrightarrow{} k[[z]]:=B $ be a cyclic $\mathbb{Z}/p^n$-extension with branching datum $(\iota_1, \ldots, \iota_n)$ and $\delta_0:=\lfloor \iota_n/(p-1) \rfloor$. In the above notation, let $P_1, \ldots, P_r \in \mathfrak{m}$ be distinct elements that are $p^{\delta_0}$-powers. Then there exists a $\mathbb{Z}/p^n$-deformation of $A \xhookrightarrow{} B$ over $k[[\varpi]]$ of types $M$ that has $\{P_1, \ldots, P_r\}$ as branch locus.
\end{lemma}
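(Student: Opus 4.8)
The plan is to reduce the statement to an explicit construction of Witt vectors over $k[[\varpi]]$ whose special fibre recovers the given one-point $\mathbb{Z}/p^n$-extension $A \hookrightarrow B$ and whose generic fibre branches precisely at the prescribed points $P_1, \ldots, P_r$ with the ramification data dictated by the matrix $M$. Since the statement is attributed to Pop's Key Lemma 3.2 in \cite{MR3194816}, the role of the proof here is really to translate his result into the present conventions; I would proceed by first recalling his construction and then checking the numerology matches the combinatorial recipe for $M$ given just above the lemma.

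First I would set up the local picture. Represent $\overline{\chi}$ by a reduced Witt vector $\underline{f} = (f_1, \ldots, f_n) \in W_n(k((x)))$ with a single pole at $x=0$, so that $\deg_{x^{-1}}(f_l)$ computes the upper jumps $\iota_l$ via Theorem \ref{theoremcaljumpirred}. The key idea from Pop's argument is to deform $\underline{f}$ by adding, for each essential place $j$, a carefully chosen Witt vector supported at a new pole, exploiting the fact that the $P_i$ are $p^{\delta_0}$-th powers so that (after an appropriate normalization) partial fraction terms of high degree at $x=0$ can be ``split off'' into a term at $x = P_i$ without changing the reduction. The choice $\delta_0 = \lfloor \iota_n/(p-1)\rfloor$ is exactly what is needed so that the semistable model of the deformed cover has the disc around each $P_i$ separated from the disc around $0$; this is the place where the hypothesis ``$P_i$ are $p^{\delta_0}$-powers'' is used.

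Next I would verify that the deformation so constructed has branching datum equal to $M$. This is a direct computation with Theorem \ref{theoremcaljumpirred} applied to the generic fibre: the row $r_1(M)$ with $e_{1,i} = p e_{1,i-1} + \epsilon_i$ is the datum at the ``old'' branch point, which retains only the non-essential part of each jump, while each new branch point contributes a row of the form \eqref{eqnrowqj}, namely $[0,\ldots,0,p-1,p^2-1,\ldots,p^{n-j+1}-1]$, coming from the $q_j$ split-off terms at place $j$. One checks that these rows satisfy conditions \ref{item1condbranchingdatum}--\ref{item3condbranchingdatum} and sum column-wise to $(\iota_1+1, \ldots, \iota_n+1)$, so that $M \in \Omega_{(\iota_1+1,\ldots,\iota_n+1)}$ and indeed has no essential parts. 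Flatness of the family over $k[[\varpi]]$ follows because the total conductor sum $\sum_j e_{j,i}$ is constant in $\varpi$ (equivalently, the Swan conductor at infinity is preserved), which by \eqref{eqngenera} keeps the genus of every subquotient constant.

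The main obstacle, and the genuinely nontrivial input, is establishing that the constructed generic fibre actually branches at \emph{all} of $P_1, \ldots, P_r$ with the claimed separated semistable model — i.e. that the split-off terms do not accidentally recombine or collide, and that the reduction really is $\overline{\chi}$ and not some degenerate quotient. This is precisely what Pop's Key Lemma provides via an inductive argument on $n$ together with a vanishing-cycle / Swan-conductor bookkeeping (cf. the use of \cite{MR904945} in Proposition \ref{proplocaltoglobal}); I would invoke that argument essentially verbatim, only re-deriving the passage from his ramification bookkeeping to the matrix $M$ in our normalization. So in the write-up I expect the bulk to be the translation of notation and the verification that the combinatorial recipe for $M$ above matches the ramification data of Pop's family, with the deeper geometric content cited from \cite[Key Lemma 3.2]{MR3194816}.
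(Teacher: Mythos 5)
Your proposal matches the paper's treatment: the paper gives no independent proof of this lemma, stating it as a restatement of Pop's Key Lemma 3.2 from \cite{MR3194816} adapted to the present conventions, exactly as you propose to do. Your additional sketch of the construction (splitting off partial-fraction terms at the $p^{\delta_0}$-power points and verifying the branching datum of the generic fibre via Theorem \ref{theoremcaljumpirred}) is consistent with Pop's argument and with Example \ref{examplepopdeformation}, so the approach is essentially the same.
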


\begin{example}
\label{examplepopdeformation}
Consider $\overline{\chi}$, a $\mathbb{Z}/5^2$-cover of $\mathbb{P}^1_k$ defined by
\[ \wp(y_1,y_2)=\bigg( \frac{1}{x^8}, \frac{1}{x^{52}}+\frac{1}{x^{46}} \bigg)=\bigg( \frac{1}{x^8}, \frac{1+x^6}{x^{52}}\bigg) \]
This cover branches at a single point with jumps $(\iota_1, \iota_2)=(8, 52)$, giving a conductors $[9,53]$. We have the following decompositions:
\begin{equation}
\begin{split}
\iota_1-5\iota_0 & = 5 \cdot 1 + 3 \\
\iota_2-5\iota_1 & = 5 \cdot 2+2.
\end{split}
\end{equation}
Based on this data, we can ``split'' $[9,53]$ into the $4 \times 2$ matrix:
\begin{equation}
\label{eqnpopsplit}
\begin{bmatrix}
9 & 53 \
\end{bmatrix} \xrightarrow{} \begin{bmatrix}
3+1 & 3\cdot 5+2+1 \\
5 & 25 \\
0 & 5\\
0 & 5\\
\end{bmatrix}=\begin{bmatrix}
4 & 18 \\
5 & 25 \\
0 & 5\\
0 & 5 \\
\end{bmatrix}.
\end{equation}
We can show that the cover $\chi$ defined by
\[ \wp(Y_1,Y_2)= \bigg(\frac{1}{x^3(x-t_1)^5}, \frac{1+x^6}{x^{17}(x-t_1)^{25}(x-t_2)^{5}(x-t^3)^{5}}  \bigg) \]
is a deformation of $\overline{\chi}$ over $k[[t_1,t_2,t_3]]$ of type (\ref{eqnpopsplit}).
\end{example}

\begin{remark}
When the fibers of a Pop type deformation are Artin-Schreier covers, the deformation is also referred to as an \textit{Oort-Sekiguchi-Suwa deformation} (OSS deformation). These deformations extend to mixed characteristic and give rise to the \textit{Oort-Sekiguchi-Suwa component} (OSS component) \cite[\S 4.3]{MR1767273}, which is a sub-scheme of co-dimension $0$ within the versal deformation space of an Artin-Schreier cover.
\end{remark}

We then obtain an analog of \cite[Proposition 4.2]{MR2985514}.

\begin{proposition}
Let $Y_n$ be an $n$-Artin-Schreier-Witt $k$-curve with branching datum $N$ \eqref{eqnmatrixbranchingdatum} and $p$-rank $Q$. Suppose there is a row, without loss of generality the first one, $r_1(N) = [\iota_1, \ldots, \iota_n]$ with essential parts as described in the setup. Then $Y_n$ can be deformed to a curve with a strictly higher $p$-rank, where the difference in $p$-rank is determined by the essential parts in $r_1(N)$.
\end{proposition}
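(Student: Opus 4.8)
The plan is to realize the claimed deformation as a Pop-type (or OSS) deformation applied only to the first row $r_1(N)$, keeping all other branch points rigid, and then to track the change in $p$-rank through the Deuring--Shafarevich formula \eqref{eqndeuringshafarevich}. First I would invoke the decomposition of $\phi_n = \prod_{i=1}^u \phi_{n,i}$ into one-point covers supported at the branch points $P_1, \ldots, P_u$, so that the local extension $\phi_{n,P_1}$ has branching datum $r_1(N) = [\iota_1, \ldots, \iota_n]$ with the given essential parts $q_j$. Applying Lemma \ref{lemmapopdeformation} to $\phi_{n,P_1}$ produces a $\mathbb{Z}/p^n$-deformation over $k[[\varpi]]$ whose generic fiber is a one-point cover deforming to a $q \times n$ matrix $M_1$ with no essential parts, where $q = 1 + \sum_{j\text{ essential}} q_j$; concretely $M_1$ has its first row $e_{1,i} = p e_{1,i-1} + \epsilon_i$ and, for each essential place $j$, exactly $q_j$ additional rows of the shape \eqref{eqnrowqj}. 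Leaving $\phi_{n,2}, \ldots, \phi_{n,u}$ unchanged and using the ``$\Leftarrow$'' direction of Proposition \ref{proplocaltoglobal}, these local deformations glue to a global deformation $\Phi_n = \mathfrak{K}_n(\sum_i \underline{F}_i)$ of $\phi_n$, with branching datum $N'$ obtained from $N$ by replacing its first row with the $q$ rows of $M_1$.

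Next I would compute the new $p$-rank. The key point is that the genera of all subquotients $C_i$ are preserved (a deformation has constant genus at every level, by \eqref{gen-form} and the vanishing-cycle/constancy-of-Swan-conductor argument already used in the proof of Proposition \ref{proplocaltoglobal}), so the conductors $\underline{\iota}$ are unchanged; only the branching datum splits. By \eqref{eqnprank} the $p$-rank of $C_n$ is $1 - p^n + \sum_{i=1}^n p^{n-i}(p^i-1) s_i$, where $s_i$ counts the branch points whose inertia has order exactly $p^i$. I would then read off from the shape of $M_1$ how many new rows have their first nonzero entry in column $i$: the row $e_{1,\bullet}$ of $M_1$ starts in column $1$ (since $\iota_1 \neq 0$), replacing the original first row of $N$, while for each essential place $j$ the $q_j$ new rows of the form \eqref{eqnrowqj} have their first nonzero entry in column $j$. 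Hence the inertia count changes by $+q_j$ in level $n-j+1$ for each essential $j$, and the $p$-rank increases by
\[
\Delta p = \sum_{j\text{ essential}} q_j \cdot p^{j-1}\,(p^{n-j+1}-1) > 0,
\]
which is strictly positive precisely because at least one $q_j$ is positive (this is the hypothesis that $r_1(N)$ has essential parts). This gives both the strict increase and the precise formula for the difference in terms of the essential parts, as claimed.

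The main obstacle I anticipate is bookkeeping rather than conceptual: one must check carefully that the new branching datum $N'$ genuinely lies in $\Omega_{\underline{\iota}}$ (conditions \ref{item1condbranchingdatum}--\ref{item4condbranchingdatum}), in particular that the rows \eqref{eqnrowqj} and the row $e_{1,\bullet}$ satisfy $e_{i,j} \geq p e_{i,j-1} - p + 1$ with the coprimality condition on the excess, and that column sums are still $\iota_i$ — this is exactly the content of Pop's construction, so it should follow from Lemma \ref{lemmapopdeformation}, but it needs to be stated. A secondary subtlety is making sure the \emph{other} branch points $P_2, \ldots, P_u$ really can be kept rigid in a way compatible with the finite-flat-local gluing of Proposition \ref{proplocaltoglobal}; here the trivial local deformation $\underline{F}_i = \underline{f}_i$ (constant in $\varpi$) does the job, and its contribution to the branching datum is unchanged, so the rows $r_2(N), \ldots, r_u(N)$ persist verbatim. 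With these checks in place, the identification $\Gamma_N \rightsquigarrow \Gamma_{N'}$ and the $p$-rank computation complete the proof.
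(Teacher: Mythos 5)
Your proposal is correct and follows essentially the same route as the paper: apply Pop's Key Lemma (Lemma \ref{lemmapopdeformation}) to split the first row into its essential-part-free partition, keep the remaining local data rigid and glue via Proposition \ref{proplocaltoglobal}, and read off the $p$-rank change from \eqref{eqnprank}. One substantive remark: your increment $\Delta p=\sum_{j\ \text{essential}}q_j\,p^{j-1}\bigl(p^{n-j+1}-1\bigr)$ is the \emph{correct} application of \eqref{eqnprank} --- each new row of type \eqref{eqnrowqj} is a single branch point of ramification index $p^{n-j+1}$, hence contributes the single term $p^{j-1}(p^{n-j+1}-1)$ --- whereas the paper's proof asserts a per-row contribution of $\sum_{i=j}^{n}p^{n-i}(p^i-1)$, which overcounts; that expression would only be right if $s_i$ in \eqref{eqnprank} counted the nonzero entries of the $i$-th column rather than the branch points of ramification index exactly $p^i$. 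A sanity check: for $p=2$ the Pop deformation $[4,8]\to\bigl[\begin{smallmatrix}2&4\\2&4\end{smallmatrix}\bigr]$ takes the $p$-rank from $0$ to $3$, matching your formula ($q_1=1$, $p^{0}(p^{2}-1)=3$) but not the paper's ($2\cdot 1+1\cdot 3=5$). Your two flagged bookkeeping points (that the split datum lies in $\Omega_{\underline{\iota}}$, and that the trivial local deformations at $P_2,\ldots,P_u$ glue) are exactly the content of Lemma \ref{lemmapopdeformation} and Proposition \ref{proplocaltoglobal}, so the argument is complete.
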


\begin{proof}
    Lemma \ref{lemmapopdeformation} gives a deformation $\Phi: \mathscr{Y}_n \xrightarrow{} \mathbb{P}^1_{k[[t]]}$ of type $$[r_1(N), \ldots, r_s(N)]^{\top} \rightarrow [r_{1}(M), \ldots, r_{q}(M), r_2(N), \ldots, r_s(N)]^{\top}.$$ 
    Furthermore, according to \eqref{eqnprank}, each row of type \eqref{eqnrowqj} contributes an additional $\sum_{i=j}^n p^{n-i} (p^i-1)$ to the $p$-rank of $\mathscr{Y}_{n, \eta}$. Therefore, the generic fiber has a $p$-rank given by
    \begin{equation*}
        Q + \sum_{j \text{ essential}} \sum_{i=j}^n p^{n-i} (p^i-1) q_j.
    \end{equation*}
    This completes the proof.
\end{proof}

We can now study the irreducibility of the moduli space. Recall that $G_{\underline{\iota}}$, introduced at the end of \S \ref{secpartitionmatrix}, denotes the graph of partitions of $\Omega_{\underline{\iota}}$. 

\begin{proposition}
\label{propclosuredeformation}
Suppose $M, N \in \Omega_{\underline{\iota}}$. Then $\Gamma_{M}$ lies in the closure of $\Gamma_{N}$ in $\ASW_{\underline{\iota}}$ if and only if any point in $\Gamma_{M}$ can be deformed over $k[[t]]$ to a point in $\Gamma_{N}$.
\end{proposition}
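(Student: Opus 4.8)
The plan is to prove the two implications separately, in both cases reducing a statement about the closure of a stratum in $\ASW_{\underline{\iota}}$ to the deformation-theoretic content already encoded in Propositions \ref{proplocaltoglobal} and \ref{propclosuredeformation}'s hypotheses together with the irreducibility of the strata established in \S\ref{secmodulispace}.

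For the ``only if'' direction, suppose $\Gamma_M \subset \overline{\Gamma_N}$. Take a geometric point $x \in \Gamma_M$. Since $\Gamma_N$ is a locally closed, irreducible subscheme of $\ASW_{\underline{\iota}}$ and $x$ lies in its closure, I would pass to the local ring of $\overline{\Gamma_N}$ at $x$ (or, more concretely, choose an irreducible curve $T$ inside $\overline{\Gamma_N}$ through $x$ whose generic point lands in $\Gamma_N$ — this is possible because $x$ is in the closure of the irreducible $\Gamma_N$ and one can cut down by generic hyperplane sections, or invoke that a point in the closure of an irreducible variety is connected to the generic point by a chain of curves). Normalizing $T$ and completing at the point over $x$ gives a trait $\spec k[[t]] \to \overline{\Gamma_N}$ sending the closed point to $x$ and the generic point into $\Gamma_N$. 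Pulling back the universal Artin--Schreier--Witt curve (which exists by Yoneda, as used in the proof that $\ASW\cov_{\underline{\iota}}$ is representable) along this trait produces a family over $k[[t]]$ whose special fiber is the cover corresponding to $x$ (branching datum $M$) and whose generic fiber has branching datum $N$; that is exactly a deformation over $k[[t]]$ from a point of $\Gamma_M$ to a point of $\Gamma_N$. Here I use that the branching datum is locally constant on each stratum and that the generic point of $T$ lies in $\Gamma_N$, so the generic fiber genuinely has datum $N$.

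For the ``if'' direction, suppose some point $x \in \Gamma_M$ admits a deformation $\Phi$ over $R := k[[t]]$ whose generic fiber lies in $\Gamma_N$. By the representability of $\ASW_{\underline{\iota}}$ (and of $\ASW\cov_{\underline{\iota}}$), the family $\Phi$ is classified by a morphism $\spec R \to \ASW_{\underline{\iota}}$ (after possibly rigidifying by three sections and descending, exactly as in Lemma \ref{lemmacoverandcurve}). The image of the closed point is $x \in \Gamma_M$ and the image of the generic point lies in $\Gamma_N$; since $\spec R$ is irreducible with generic point mapping into $\Gamma_N$, the whole image lies in $\overline{\Gamma_N}$, and in particular $x \in \overline{\Gamma_N}$. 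Finally, to upgrade ``some point of $\Gamma_M$ lies in $\overline{\Gamma_N}$'' to ``$\Gamma_M \subset \overline{\Gamma_N}$'', I invoke irreducibility of $\Gamma_M$ (established in \S\ref{secmodulispace}): the set $\Gamma_M \cap \overline{\Gamma_N}$ is closed in $\Gamma_M$; if it is nonempty it suffices to know it is also dense, which follows because the deformation in Lemma \ref{lemmapopdeformation} / Proposition \ref{proplocaltoglobal} can be produced at \emph{any} point of $\Gamma_M$ with the prescribed branch locus — the construction there only depends on the branching datum $M$, not on the particular cover — so every point of $\Gamma_M$ deforms into $\Gamma_N$ and hence lies in $\overline{\Gamma_N}$.

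\textbf{Main obstacle.} The delicate point is the passage from ``$x$ lies in the closure of the irreducible locally closed $\Gamma_N$'' to ``there is a trait through $x$ whose generic point maps into $\Gamma_N$'' — i.e., realizing an arbitrary specialization by an actual one-parameter family over $k[[t]]$. On a finite-type scheme (or stack) this is standard (a point in the closure of an irreducible subset is a specialization of its generic point, and by spreading out one finds a curve, then normalizes and completes), but one must be careful that $\ASW_{\underline{\iota}}$ is only a stack and that the stratum $\Gamma_N$ need not be proper, so the generic fiber of the trait must be checked to land in $\Gamma_N$ rather than merely in $\overline{\Gamma_N}$; this is handled by choosing the curve $T$ so its generic point is the generic point of $\overline{\Gamma_N}$, which lies in the open dense $\Gamma_N$. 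The converse direction and the upgrade to all of $\Gamma_M$ are comparatively formal given the irreducibility results already in hand.
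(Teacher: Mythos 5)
Your overall strategy is sound and, unlike the paper (which simply defers to \cite[Proposition 3.4]{DANG2020398}, a reference treating the $n=1$ case), you give an actual argument: realize the closure relation $x \in \overline{\Gamma_N}$ by a trait $\spec k[[t]] \to \overline{\Gamma_N}$ whose generic point lands in the dense open $\Gamma_N$ and pull back the universal family to get the deformation; conversely, the classifying map of a deformation is a trait whose closed point therefore specializes from a point of $\Gamma_N$. Both halves are the standard specialization-versus-trait dictionary and are correct as far as they go (modulo the routine care you already flag about working on a stack via $\ASW\cov_{\underline{\iota}}$ and Lemma \ref{lemmacoverandcurve}, and about the trivial case $\dim \overline{\Gamma_N}=0$).

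There is, however, one step you should delete rather than repair. In the ``if'' direction you weaken the hypothesis to ``\emph{some} point of $\Gamma_M$ deforms into $\Gamma_N$'' and then try to upgrade to all of $\Gamma_M$ by claiming that the deformation ``only depends on the branching datum $M$, not on the particular cover.'' That claim is not justified: an arbitrary deformation witnessing $\Gamma_M \rightsquigarrow \Gamma_N$ need not be of Pop type, and Lemma \ref{lemmapopdeformation} only produces deformations to the specific partition with no essential parts, not to an arbitrary $N$ with $M \prec N$. In fact the paper's own remark in \S\ref{secconnectedness} points out that the equivalence of ``$\Gamma_M \subset \overline{\Gamma_N}$'' with the existence of a deformation from a \emph{single} point is known only for $n=1$ (\cite[Corollary 3.6]{DANG2020398}) and is open in general — so your upgrade step is essentially asserting an open problem. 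Fortunately it is also unnecessary: the proposition's hypothesis is that \emph{every} point of $\Gamma_M$ deforms into $\Gamma_N$, so you may apply your trait argument pointwise and conclude $x \in \overline{\Gamma_N}$ for each $x \in \Gamma_M$ directly. With that detour removed, your proof is complete and is presumably the same argument as in the cited reference.
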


\begin{proof}
The proof can be found in \cite[Proposition 3.4]{DANG2020398}.
\end{proof}

\begin{theorem}
\label{theoremirredcomponent}
The irreducible components of $\ASW_{\underline{\iota}}$ are in a one-to-one correspondence with the vertices of the graph $G_{\underline{\iota}}$ with no essential parts. Specifically, if $M \in \Omega_{\underline{\iota}}$ has no essential parts, then the irreducible component associated with $M$ is the closure of $\ASW_{\underline{\iota},M}$.
\end{theorem}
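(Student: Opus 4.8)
The plan is to show that the partial ordering $\prec$ on $\Omega_{\underline{\iota}}$ governs specialization of the strata $\Gamma_M = \ASW_{\underline{\iota},M}$, and then to read off the irreducible components by identifying the maximal elements reachable from below. First I would establish that $\ASW_{\underline{\iota}}$ is the (finite) union $\bigcup_{M \in \Omega_{\underline{\iota}}} \Gamma_M$ of its locally closed strata, which follows from Proposition~\ref{propcorrespondencematrixcover} together with the construction of the moduli space; each $\Gamma_M$ is irreducible by the Corollary to the dimension theorem (via Lemma~\ref{lemmacoverandcurve}). Hence every irreducible component of $\ASW_{\underline{\iota}}$ is the closure $\overline{\Gamma_M}$ of a unique stratum that is maximal for the ``lies in the closure of'' relation.

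The heart of the argument is to translate the closure relation into combinatorics. By Proposition~\ref{propclosuredeformation}, $\Gamma_M \subset \overline{\Gamma_N}$ if and only if a point of $\Gamma_M$ deforms over $k[[t]]$ to a point of $\Gamma_N$; by Proposition~\ref{proplocaltoglobal} this holds only if $M \prec N$, and conversely the local-to-global principle reduces the existence of such a deformation to producing, at each branch point, a local deformation of the prescribed type. The key existence input is Lemma~\ref{lemmapopdeformation} (Pop type deformations): if a row $r_1(N) = [\iota_1,\ldots,\iota_n]$ of $N$ has an essential part, then $N$ admits a refinement $N'$ with $N \prec N'$ obtained by splitting that row according to \eqref{eqnrowqj}, and $\Gamma_N$ deforms to $\Gamma_{N'}$. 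Iterating, every $\Gamma_N$ specializes (generizes, really) to some $\Gamma_{N^\ast}$ with $N^\ast$ having no essential parts, so $\overline{\Gamma_N} \subseteq \overline{\Gamma_{N^\ast}}$ and $N$ is never a component unless $N$ itself has no essential parts.

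It then remains to prove the converse: if $M \in \Omega_{\underline{\iota}}$ has no essential parts, then $\overline{\Gamma_M}$ is genuinely a component, i.e. $\Gamma_M$ is maximal --- there is no $N \in \Omega_{\underline{\iota}}$ with $M \prec N$, $M \ne N$, for which $\Gamma_M$ deforms to $\Gamma_N$. Here I would argue that if $M \prec N$ with $M \ne N$, then passing from $M$ to $N$ strictly increases the $p$-rank on some subquotient (since properly refining a column of a partition strictly increases the number of nonzero entries, hence $s_i$, by condition \ref{item4condbranchingdatum} and the definition of $\Omega_{\underline{\iota},\underline{s}}$). But a curve with branching datum $M$ having no essential parts cannot be deformed to strictly larger $p$-rank: this is exactly the point where the ``no essential parts'' hypothesis is used, and it should follow from the refined Swan conductor / Hurwitz tree obstruction (the analogue of \cite[Proposition 4.2]{MR2985514} and the Hurwitz tree necessary conditions of the previous subsection) --- if no row of $M$ has an essential part, there is no Hurwitz tree realizing any nontrivial $p$-rank jump out of $M$. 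Combining, the maximal strata are exactly those indexed by essential-part-free matrices, each contributing the component $\overline{\Gamma_M}$, and distinct such $M$ give distinct components since the generic branching datum of a component is an invariant.

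\textbf{Main obstacle.} The delicate step is the converse direction: showing that a datum with no essential parts is truly maximal, i.e. that no nontrivial deformation increases the $p$-rank. One must rule out \emph{all} refinements $M \prec N$, not merely the Pop type ones, which requires the full strength of the Hurwitz tree obstruction for $\mathbb{Z}/p^n$ (the extension conditions between $\mathcal{T}_{i-1}$ and $\mathcal{T}_i$): one shows a tree of the required type cannot exist because, absent an essential part, the depth Swan conductors are forced to be too small to accommodate a branch-point split while keeping genera fixed. Making this quantitative --- matching the numerology of \eqref{eqnrowqj}, the depth $\delta_0 = \lfloor \iota_n/(p-1)\rfloor$, and the conductor-sum constraint from \eqref{eqndifferent} --- is where the real work lies.
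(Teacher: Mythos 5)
Your proposal follows essentially the same route as the paper's proof: the forward direction (every stratum whose datum has an essential part lies in the closure of an essential-part-free stratum) is Lemma \ref{lemmapopdeformation} combined with Proposition \ref{propclosuredeformation}, and the converse (essential-part-free strata are maximal) rests on Proposition \ref{proplocaltoglobal} together with the non-existence of non-trivial local deformations out of a row with no essential part. The step you single out as the ``main obstacle'' is exactly the step the paper dispatches with a one-line appeal to Proposition \ref{proplocaltoglobal}; since that proposition by itself only yields the necessary condition $M \prec N$, the substantive input there really is the local non-deformability coming from the Hurwitz-tree obstructions of the first author's earlier work, so your diagnosis of where the content lies is accurate rather than a defect relative to the paper's own argument. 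Your intermediate reformulation via $p$-ranks (a proper refinement increases $s_n$ and hence the $p$-rank) is correct but does not shorten that step --- it is equivalent to ruling out all proper refinements directly.
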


\begin{proof}
Lemma \ref{lemmapopdeformation} and Proposition \ref{propclosuredeformation} imply that any stratum $\Gamma_M$ where $M$ has essential parts is contained within the closure of at least one stratum that does not have essential parts. Furthermore, Proposition \ref{proplocaltoglobal} indicates that the points in $\Gamma_N$, where $N$ lacks an essential part, cannot be further deformed non-trivially. Therefore, the theorem immediately follows from these observations.
\end{proof}

\begin{corollary}
If $G_{\underline{\iota}}$ is disconnected, then $\ASW_{\underline{\iota},M}$ is also disconnected.
\end{corollary}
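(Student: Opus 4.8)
The plan is to derive the disconnectedness of $\ASW_{\underline{\iota}}$ from a disconnection of the Hasse diagram $G_{\underline{\iota}}$ by exhibiting a decomposition of the underlying space into two non-empty, simultaneously open and closed pieces, each a union of the locally closed strata $\Gamma_M = \ASW_{\underline{\iota},M}$. (I read the conclusion as concerning $\ASW_{\underline{\iota}}$ itself; a single stratum $\ASW_{\underline{\iota},M}$ is irreducible by the results of the previous section, hence connected.) First I would record that, since every geometric point of $\ASW_{\underline{\iota}}$ has a well-defined branching datum lying in $\Omega_{\underline{\iota}}$ by Proposition \ref{propcorrespondencematrixcover}, the strata $\{\Gamma_M\}_{M\in\Omega_{\underline{\iota}}}$ partition the topological space underlying $\ASW_{\underline{\iota}}$, and each $\Gamma_M$ is non-empty and irreducible.

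The key step is the inclusion $\overline{\Gamma_M} \subseteq \bigcup_{M'\prec M}\Gamma_{M'}$ for every $M\in\Omega_{\underline{\iota}}$. To see this, take a closed point $x\in\overline{\Gamma_M}$ and let $\Gamma_{M'}$ be the unique stratum containing it. Since $\Gamma_M$ is irreducible, there is a discrete valuation ring $R$ with residue field $k$ and a morphism $\spec R \to \ASW_{\underline{\iota}}$ carrying the closed point to $x$ and the generic point into $\Gamma_M$; pulling back the universal Artin-Schreier-Witt curve yields a one-parameter deformation over $R$ of the cover represented by $x$ whose generic fibre lies in $\Gamma_M$. In the language of the paper this is a deformation of type $M'\to M$, so Proposition \ref{proplocaltoglobal} forces $M'\prec M$. (Alternatively one may simply quote Proposition \ref{propclosuredeformation}, which already records exactly which strata occur in $\overline{\Gamma_M}$, together with Proposition \ref{proplocaltoglobal}.)

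Now assume $G_{\underline{\iota}}$ is disconnected and split its vertex set as $\Omega_{\underline{\iota}} = A \sqcup B$ with $A$ and $B$ non-empty, each a union of connected components of $G_{\underline{\iota}}$. Because $\Omega_{\underline{\iota}}$ is finite, $\prec$ is the transitive closure of the covering relation recorded by the edges of $G_{\underline{\iota}}$; hence no relation $M'\prec M$ can join the two parts, so $M\in A$ implies $\{M' : M'\prec M\}\subseteq A$, and likewise for $B$. Setting $U_A := \bigcup_{M\in A}\Gamma_M$ and $U_B := \bigcup_{M\in B}\Gamma_M$ we get $\ASW_{\underline{\iota}} = U_A \sqcup U_B$ as sets, and by the previous paragraph $\overline{U_A} = \bigcup_{M\in A}\overline{\Gamma_M} \subseteq \bigcup_{M\in A}\bigcup_{M'\prec M}\Gamma_{M'} \subseteq U_A$. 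Thus $U_A$ is closed; by symmetry $U_B$ is closed; so both are open as well, and both are non-empty. Therefore $\ASW_{\underline{\iota}}$ is disconnected.

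The only genuinely delicate point is the closure inclusion of the second paragraph: one must know that membership of a point in $\overline{\Gamma_M}$ is witnessed by an honest one-parameter specialization and that every such specialization respects $\prec$, which is exactly the content combined from Propositions \ref{propclosuredeformation} and \ref{proplocaltoglobal} — both already established. The remainder is formal: checking that the strata genuinely partition the space, that finiteness of $\Omega_{\underline{\iota}}$ makes $G_{\underline{\iota}}$ recover $\prec$ under transitivity, and that a decomposition into finitely many non-empty open-and-closed pieces indeed exhibits disconnectedness.
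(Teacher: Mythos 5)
Your proof is correct and takes essentially the same route as the paper: both arguments rest on Proposition \ref{proplocaltoglobal} (together with Proposition \ref{propclosuredeformation}) to show that the closure of a stratum $\Gamma_M$ can only meet strata $\Gamma_{M'}$ with $M'\prec M$, hence lying in the same connected component of $G_{\underline{\iota}}$, so the strata indexed by a union of components form a clopen subset. Your write-up is in fact more complete than the paper's two-line sketch (which only asserts that the closures of minimal vertices in different components are disjoint), and you are right that the conclusion should read $\ASW_{\underline{\iota}}$ rather than $\ASW_{\underline{\iota},M}$.
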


\begin{proof}
Suppose ${M_1, \ldots, M_r}$ and ${N_1, \ldots, N_s}$ are two sets of minimum vertices of $G_{\underline{\iota}}$ that lie in different (union of) connected components in the graph. According to Proposition \ref{proplocaltoglobal}, the union of the closures of the $M_j$'s does not intersect any of the $N_i$'s. 
\end{proof}

\subsection{Irreducibility of the Artin-Schreier-Witt locus}

\begin{proposition}
The moduli space $\ASW_{(\iota_1, \ldots, \iota_n)}$ is irreducible if and ony if the following conditions are met:
\begin{enumerate}[label*=\arabic*.]
    \item $\iota_1=2$ or $\iota_1=3$, and
    \item $\iota_{i}=p\iota_{i-1}-p+1$ or $\iota_{i}=p\iota_{i-1}-p+2$ for $2 \le i \le n$.
\end{enumerate}
\end{proposition}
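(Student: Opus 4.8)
The plan is to reduce the statement to a combinatorial count about $\Omega_{\underline{\iota}}$. The space $\ASW_{\underline{\iota}}$ is nonempty if and only if $\Omega_{\underline{\iota}}\neq\emptyset$, and in that case, by Theorem~\ref{theoremirredcomponent}, its irreducible components are in bijection with the matrices in $\Omega_{\underline{\iota}}$ that have no essential parts. So it suffices to prove that $\Omega_{\underline{\iota}}$ is nonempty and contains a \emph{unique} no-essential-parts matrix precisely when conditions $1$ and $2$ hold. (As in \cite{MR2985514} I argue for $p\ge 5$, the case of interest; the primes $2$ and $3$ behave slightly differently and need a separate, longer list of exceptional tuples.) Throughout I use that a row $[e_1,\dots,e_n]$ with first nonzero entry $e_m$ has no essential part if and only if $e_m\le p$ and $e_l\le p\,e_{l-1}$ for all $l>m$.

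For the ``if'' direction, assume $1$ and $2$. I claim $\Omega_{\underline{\iota}}$ is the singleton $\{[\iota_1,\dots,\iota_n]\}$, a one-row matrix which has no essential part since $\iota_1\le 3\le p$ and $\iota_j\le p\iota_{j-1}-p+2\le p\iota_{j-1}$. Indeed, in any $M\in\Omega_{\underline{\iota}}$ every nonzero first-column entry is $\ge 2$ (being $\ge 1$ and $\not\equiv 1\pmod p$ by~\ref{item1condbranchingdatum}), and these sum to $\iota_1\in\{2,3\}$, so exactly one of them is nonzero and it equals $\iota_1$; call that row the main row. Now induct on $j$: if columns $1,\dots,j-1$ are concentrated on the main row with entries $\iota_1,\dots,\iota_{j-1}$, then by~\ref{item2condbranchingdatum} the main row's $j$-th entry is $\ge p\iota_{j-1}-p+1$, while every other row acquiring a nonzero $j$-th entry contributes $\ge 2$ to column $j$; since the column total is $\iota_j\le p\iota_{j-1}-p+2$, no such other row can occur and the main row's $j$-th entry is $\iota_j$. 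Hence $M=[\iota_1,\dots,\iota_n]$ and $\ASW_{\underline{\iota}}$ is irreducible.

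For the ``only if'' direction, suppose $\Omega_{\underline{\iota}}\neq\emptyset$ but that conditions $1,2$ do not both hold; I exhibit two distinct no-essential-parts matrices. First, starting from any element of $\Omega_{\underline{\iota}}$ and applying Pop's splitting (Lemma~\ref{lemmapopdeformation}) row by row together with Proposition~\ref{proplocaltoglobal}, one obtains a no-essential-parts matrix refining it; this shows at least one exists and realises each as a stack of ``Pop rows''. Let $i$ be minimal such that either ($i=1$ and $\iota_1\notin\{2,3\}$) or ($i\ge 2$ and $\iota_i\notin\{p\iota_{i-1}-p+1,\ p\iota_{i-1}-p+2\}$); by the argument of the previous paragraph applied to levels $<i$, the first $i-1$ columns of every element of $\Omega_{\underline{\iota}}$ are concentrated on one main row with entries $\iota_1,\dots,\iota_{i-1}$. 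If $i=1$ then $\iota_1\ge 4$ (the case $\iota_1=1$ forces $\Omega_{\underline{\iota}}=\emptyset$, and $\iota_1\equiv 1\pmod p$ with $\iota_1\ge p+1$ is handled the same way), and $\iota_1$ then admits two distinct partitions into parts from $\{2,\dots,p\}$ that are $\not\equiv 1\pmod p$, for instance $\{\iota_1\}$ (when $\iota_1\le p$) versus $\{2,\iota_1-2\}$. If $i\ge 2$ then $\iota_i\ge p\iota_{i-1}-p+3$, and at level $i$ one may either keep the whole column on the main row (Pop-splitting its tail if $\iota_i>p\iota_{i-1}$) or lower the main row's $i$-th entry to $p\iota_{i-1}-p+1$ or $p\iota_{i-1}-p+2$ and peel off a fresh row carrying the resulting (admissible) remainder; these two choices already differ in column $i$. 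In both cases one completes the two partial configurations through columns $i+1,\dots,n$ by distributing Pop rows, using quantitatively that $\Omega_{\underline{\iota}}\neq\emptyset$ confines each $\iota_j$ to a window around $p\iota_{j-1}$ wide enough to absorb one unit of redistribution. Propagating each choice to level $n$ then produces two distinct no-essential-parts matrices, so $\ASW_{\underline{\iota}}$ is reducible.

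I expect the main obstacle to be precisely this last step: checking that the two competing modifications introduced at level $i$ can be carried through columns $i+1,\dots,n$ while simultaneously respecting the prescribed column sums, all of conditions~\ref{item1condbranchingdatum}--\ref{item4condbranchingdatum}, and the no-essential-parts inequalities $\iota_{j,l}\le p\,\iota_{j,l-1}$. The bookkeeping is elementary but delicate, and it is where the hypothesis $\Omega_{\underline{\iota}}\neq\emptyset$ must be exploited in a precise, quantitative way rather than merely as nonemptiness.
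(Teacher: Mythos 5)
Your reduction is exactly the paper's: Theorem~\ref{theoremirredcomponent} identifies the irreducible components with the no-essential-parts elements of $\Omega_{\underline{\iota}}$, so the proposition becomes the purely combinatorial claim that this set is a singleton precisely under the two stated conditions. Your ``if'' direction is correct and complete --- indeed more detailed than the paper, whose proof stops at the reduction: condition~\ref{item1condbranchingdatum} (resp.\ condition~\ref{item3condbranchingdatum} applied to a row whose preceding entry is $0$) forces every newly appearing nonzero entry to be at least $2$, and the window $\iota_j\in\{p\iota_{j-1}-p+1,\,p\iota_{j-1}-p+2\}$ leaves no room for a second row, so $\Omega_{\underline{\iota}}=\{[\iota_1,\ldots,\iota_n]\}$.

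The ``only if'' direction has a genuine gap, and it is not only the bookkeeping you flag at the end. A small slip first: $\{2,\iota_1-2\}$ need not be admissible (for $p=5$, $\iota_1=8$ the part $6\equiv 1\pmod 5$ violates condition~\ref{item1condbranchingdatum}), so even the level-$i$ bifurcation needs a case split. More seriously, the mechanism you propose for completing the two competing configurations --- that $\Omega_{\underline{\iota}}\neq\emptyset$ ``confines each $\iota_j$ to a window around $p\iota_{j-1}$ wide enough to absorb one unit of redistribution'' --- is false as stated. Take $p=5$ and $\underline{\iota}=(4,12)$: then $\Omega_{\underline{\iota}}$ is the nonempty singleton $\bigl\{\left[\begin{smallmatrix}2&6\\2&6\end{smallmatrix}\right]\bigr\}$, whose unique element has no essential parts, even though $\iota_1=4$; the alternative first column $[4]$ cannot be completed because its second entry would have to be at least $16>12$. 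So under your hypotheses the conclusion fails. What rescues the proposition is the standing assumption $\iota_j\ge p\iota_{j-1}-p$ from \S\ref{secpartitionmatrix}, which excludes this tuple; a correct completion argument must invoke it explicitly, and must track that extendability of a partial no-essential-parts configuration to column $l+1$ depends on the number $s_l$ of nonzero entries in column $l$ (the reachable next column sums are exactly the integers $\ge p\iota_l-(p-1)s_l$), so the two configurations created at level $i$ are not equally easy to propagate. The paper omits this verification entirely, but since your write-up commits to a specific and, as it stands, insufficient mechanism, the step needs to be repaired rather than merely acknowledged.
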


\begin{proof}
By applying Theorem \ref{theoremirredcomponent}, we establish a one-to-one correspondence between the irreducible components of $\ASW_{({\iota_1, \ldots, \iota_n})}$ and the minimum vertices of $G_{({\iota_1, \ldots, \iota_n})}$. Therefore, it is sufficient to determine the $G_{({\iota_1, \ldots, \iota_n})}$ graphs with only one minimum vertex, which are precisely those satisfy the given conditions.
\end{proof}

\section{The connectedness of \texorpdfstring{$\ASW_{\overline{\iota}}$}{}}
\label{secconnectedness}

For each $m\ge n$, there is canonical map
\begin{equation*}
\begin{split}
     \mathscr{X}:= \ASW_{(\iota_1, \ldots, \iota_{n-1}, \iota_n, \ldots, \iota_m)} & \xrightarrow{\Theta_{m/n}} \ASW_{(\iota_1, \ldots, 
     \iota_{n-1}, \iota_n)}:=\mathscr{Y}\\
    \phi_m & \longmapsto{} (\phi_m)^{p^{m-n}}  \\
\end{split}
\end{equation*}
which, on the level of points, associates each $\mathbb{Z}/p^m$-cover represented by a length-$m$ Witt vector $(f_1, \ldots, f_{n-1},\allowbreak f_n, \ldots, f_m)$ with the $\mathbb{Z}/p^n$-sub-cover corresponding to $(f_1, \ldots, f_{n-1},\allowbreak f_n)$.

\begin{proposition}
With the above notations, $\Theta_{m/n}$ is a surjective, continuous, and closed morphism of $k$-schemes.
\end{proposition}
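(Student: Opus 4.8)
The plan is to dispatch the three assertions one at a time, after reducing to the case $m=n+1$: the subcover operation is transitive, so $\Theta_{m/n}=\Theta_{n+1/n}\circ\Theta_{n+2/n+1}\circ\cdots\circ\Theta_{m/m-1}$, and a composite of morphisms (resp.\ of surjections, resp.\ of closed maps) is again of the same kind; so it suffices to treat $\Theta:=\Theta_{n+1/n}\colon \mathscr{X}=\ASW_{(\iota_1,\dots,\iota_{n+1})}\to\mathscr{Y}=\ASW_{(\iota_1,\dots,\iota_n)}$. That $\Theta$ is a morphism of $k$-stacks, and therefore continuous, is formal: since $\mathbb{Z}/p^{n+1}$ has a unique subgroup of order $p$, sending a family $C\to C/(\mathbb{Z}/p^{n+1})\cong\mathbb{P}^1_S$ to $C/(\mathbb{Z}/p)\to\mathbb{P}^1_S$ with its residual $\mathbb{Z}/p^n$-action is compatible with base change, hence defines a natural transformation $\mathcal{ASW}_{(\iota_1,\dots,\iota_{n+1})}\to\mathcal{ASW}_{(\iota_1,\dots,\iota_n)}$; the sub-quotient genera and conductors are the prescribed ones by Theorem~\ref{theoremcaljumpirred} (the branching datum of $(\phi_{n+1})^{p}$ being the first $n$ columns of that of $\phi_{n+1}$, cf.\ the remark after Proposition~\ref{propcorrespondencematrixcover}), and by Yoneda's lemma for stacks this transformation is induced by a morphism.

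\textbf{Surjectivity.} Let $\psi\in\mathscr{Y}(k)$ be given by a reduced Witt vector $(g_1,\dots,g_n)\in W_n(k(x))$ with branching datum $M$ and branch locus $\{P_1,\dots,P_r\}$. First extend $M$ to a partition $N\in\Omega_{(\iota_1,\dots,\iota_{n+1})}$ whose first $n$ columns recover $M$, up to adjoining extra rows that vanish on those columns: continue each row of $M$ minimally by $e_{j,n+1}=pe_{j,n}-p+1$ and distribute the leftover $\iota_{n+1}-\sum_j(pe_{j,n}-p+1)$ either into one row, when it is prime to $p$, or into new rows of the shape permitted by conditions~\ref{item1condbranchingdatum}--\ref{item3condbranchingdatum}; the inequalities $\iota_1\ge1$ and $\iota_{n+1}\ge p\iota_n-p$ are what make this possible. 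Then, as in the proof of Proposition~\ref{propcorrespondencematrixcover}, choose new points $P_{r+1},\dots,P_s\in\mathbb{P}^1_k$ and a rational function $g_{n+1}$ with poles among the $P_l$ of the orders dictated by the last column of $N$; since $g_1,\dots,g_n$ are untouched, Theorem~\ref{theoremcaljumpirred} shows $\phi:=\mathfrak{K}_{n+1}(g_1,\dots,g_{n+1})$ has branching datum $N$, so $\phi\in\mathscr{X}(k)$ and $\Theta(\phi)=\psi$. The same argument shows more precisely that $\Theta$ carries the stratum $\ASW_{(\iota_1,\dots,\iota_{n+1}),N}$ \emph{onto} the stratum $\ASW_{(\iota_1,\dots,\iota_n),\tau(N)}$, where $\tau(N)$ is obtained from the first $n$ columns of $N$ by deleting zero rows; this is what we use next.

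\textbf{Closedness.} Since $\Theta$ is a morphism of finite-type $k$-stacks, $\Theta(Z)$ is constructible for any closed $Z\subseteq\mathscr{X}$ (Chevalley), so it is enough to show $\Theta(Z)$ is stable under specialization, and by the usual reduction this amounts to lifting one-parameter specializations: given a point $\phi_0\in\mathscr{X}$ with image $\psi_0=\Theta(\phi_0)$ and a specialization $\psi_0\rightsquigarrow\psi_1$ in $\mathscr{Y}$ — realized by a $\mathbb{Z}/p^n$-family over $k[[t]]$ with generic fibre $\psi_0$, represented by a Witt vector $(G_1,\dots,G_n)$ over $k[[t]](x)$ — one must produce $\phi_1$ with $\phi_0\rightsquigarrow\phi_1$ and $\Theta(\phi_1)=\psi_1$. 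By the surjectivity step the branching datum of $\phi_0$ extends its $\mathbb{Z}/p^n$-datum, so one may append a coordinate $G_{n+1}$ over $k((t))(x)$ so that the generic fibre of $(G_1,\dots,G_{n+1})$ is equivalent to $\phi_0$; after clearing denominators and, if necessary, replacing the Witt vector by an equivalent one with coefficients in $k[[t]]$, one spreads it out to a flat $\mathbb{Z}/p^{n+1}$-family over $k[[t]]$. Its closed fibre again has conductors $(\iota_1,\dots,\iota_{n+1})$, because the Swan conductors are constant in a flat family (the vanishing-cycle formula \cite{MR904945}, already invoked in the proof of Proposition~\ref{proplocaltoglobal}); hence it lies in $\mathscr{X}$ and is the required $\phi_1$.

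\textbf{The main obstacle.} The delicate point is exactly this last construction. One cannot shortcut it by properness of $\Theta$, which fails: the fibres of $\Theta$ are positive-dimensional varieties of affine type — configurations of the added branch points together with the coefficients of $g_{n+1}$ — so $\Theta$ is neither proper nor universally closed, and closedness is a genuine feature of Artin--Schreier--Witt degenerations rather than a formality. What has to be shown is that appending and then spreading out one Witt coordinate over $k[[t]]$ always yields a flat family whose closed fibre stays in the conductor locus $(\iota_1,\dots,\iota_{n+1})$ — equivalently, that the branch points created over the generic fibre can collapse on the closed fibre only in ways compatible with that conductor vector. The Hurwitz-tree machinery of \cite{2020arXiv201013614D} used throughout Section~\ref{secdeformation} is precisely the bookkeeping device for these collisions; granting this, $\Theta_{n+1/n}$, and hence $\Theta_{m/n}$, is closed.
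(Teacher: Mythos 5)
Your overall strategy coincides with the paper's: reduce closedness to lifting specializations along $\Theta$, and reduce that lifting to the statement that a $k[[t]]$-deformation of the $\mathbb{Z}/p^n$-subcover extends to a $k[[t]]$-deformation of the full $\mathbb{Z}/p^{m}$-cover with the prescribed conductors. But that last statement is precisely where your proof stops: you write ``granting this'' and gesture at the Hurwitz-tree machinery, whereas the paper invokes a specific result, \cite[Theorem~1.2]{2020arXiv201013614D}, which asserts exactly this extension property and is the main (and genuinely difficult) input. Your intermediate description --- append a coordinate $G_{n+1}$ over $k((t))(x)$, clear denominators, spread out, and conclude that the closed fibre has the right conductors because ``Swan conductors are constant in a flat family'' --- is not a proof: a naive extension typically has \emph{bad} reduction (branch points collide, the closed fibre of the normalization is singular or the conductors jump), and constancy of the conductors is a consequence of good reduction, not a tool for establishing it. Choosing $G_{n+1}$ (and adjusting $G_1,\dots,G_n$ within their Artin--Schreier--Witt classes) so that the resulting level-$(n+1)$ family has smooth closed fibre compatible with the given level-$n$ family is the content of the cited theorem; without invoking it, the closedness argument is incomplete at its only nontrivial point.

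There is a second, more concrete gap in your surjectivity step. You claim that every $M\in\Omega_{(\iota_1,\dots,\iota_n)}$ extends to some $N\in\Omega_{(\iota_1,\dots,\iota_{n+1})}$ and that ``the inequalities $\iota_1\ge 1$ and $\iota_{n+1}\ge p\iota_n-p$ are what make this possible.'' They do not: if $M$ has $s_n$ nonzero entries in its last column, the minimal admissible continuation of those rows already contributes $p\iota_n-s_n(p-1)$ to column $n+1$ (and the extra rows you propose to adjoin can only increase the column sum), so the construction fails whenever $\iota_{n+1}<p\iota_n-s_n(p-1)$. For instance with $p=2$ and $\underline{\iota}=(4,8,14)$ the one-row datum $M=[4,8]$ admits no continuation, since its third entry would have to be at least $15$, even though $\Omega_{(4,8,14)}$ is nonempty. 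The paper avoids this combinatorial construction entirely by deriving surjectivity from the same extension theorem applied to the trivial deformation; if you want to keep the constructive route you must either add the hypothesis that guarantees extendability of every partition or explain how the obstruction is excluded.
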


\begin{proof}
The proof that $\Theta_{m/n}: \mathscr{X} \xrightarrow{} \mathscr{Y}$ is morphism, hence continuous, is straight-forward. Suppose $\phi_m \in \ASW_{(\iota_1, \ldots, \iota_{n-1}, \iota_n, \ldots, \iota_m)}$ and $\phi_n=(\phi_m)^{p^{m-n}}$. According to \cite[Theorem 1.2]{2020arXiv201013614D}, any deformation $\Phi_n$ of $\phi_n$ over $k[[t]]$ extends to a deformation of $\phi_m$. In other words, for each point $x \in \mathscr{X}$, if $y=f(x)$ lies in the closure of a point $v \in \mathscr{Y}$, then $x \in \overline{f^{-1}(v)}$. Applying the theorem to the case where $\Phi_n$ is trivial establishes the surjectivity.

Now suppose $A\subset \mathscr{X}$ is closed, and $y \in
(f(A))^c$ is a limit point of $f(A)$. If $\dim (A)>0$, then according to \cite[Proposition 3.2]{DANG2020398}, we can assert that $y \in \overline{v}$ for a point $v \in f(A)$. If $\dim A=0$, then $A$ is a finite union of closed points, and we can conclude the same thing. Moreover, if $v=f(u)$ for a point $u \in A$, then there exists a model of $u$ over $k[[t]]$ whose special fiber is isomorphic to a smooth point $x \in f^{-1}(y)$. Hence, $x$ is a limit point of $u \in A$. However, $x \not\in A$ since $y \not\in f(A)$. This contradicts the assumption that $A$ is closed. Therefore, $f(A)$ is a closed subset of $\mathscr{Y}$.
\end{proof}

\begin{lemma}
Suppose $m \ge n$ are integers, and $\ASW_{(\iota_1, \ldots, \iota_{n-1}, \iota_n)}$ is disconnected. Then so is $\ASW_{(\iota_1, \ldots, \iota_{n-1}, \iota_n, \ldots, \iota_m)}$.
\end{lemma}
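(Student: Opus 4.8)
The plan is to deduce this lemma directly from the previous proposition, which tells us that the canonical map
\[
\Theta_{m/n} \colon \ASW_{(\iota_1, \ldots, \iota_{n-1}, \iota_n, \ldots, \iota_m)} \xrightarrow{\ } \ASW_{(\iota_1, \ldots, \iota_{n-1}, \iota_n)}
\]
is surjective, continuous, and closed. The key observation is that a surjective continuous closed map of topological spaces with nonempty fibers reflects connectedness: if the source $\mathscr{X}$ were connected, then its image $\Theta_{m/n}(\mathscr{X}) = \mathscr{Y}$ would be connected, contradicting the hypothesis that $\ASW_{(\iota_1, \ldots, \iota_{n-1}, \iota_n)}$ is disconnected. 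Contrapositively, disconnectedness of $\mathscr{Y}$ forces disconnectedness of $\mathscr{X}$.

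First I would write $\mathscr{Y} = \ASW_{(\iota_1, \ldots, \iota_{n-1}, \iota_n)} = U_1 \sqcup U_2$ as a disjoint union of two nonempty open (equivalently closed) subsets, which exists by the disconnectedness hypothesis. Then I would set $V_i := \Theta_{m/n}^{-1}(U_i)$ for $i = 1, 2$. Since $\Theta_{m/n}$ is continuous, each $V_i$ is open in $\mathscr{X}$; since the $U_i$ are also closed and $\Theta_{m/n}$ is continuous, each $V_i$ is closed as well; clearly $V_1 \cap V_2 = \varnothing$ and $V_1 \cup V_2 = \mathscr{X}$ because $U_1, U_2$ partition $\mathscr{Y}$. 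It remains only to check that neither $V_i$ is empty: this is exactly where surjectivity of $\Theta_{m/n}$ enters, since $U_i \neq \varnothing$ implies $\Theta_{m/n}^{-1}(U_i) \neq \varnothing$. Hence $\mathscr{X} = V_1 \sqcup V_2$ is a decomposition into two nonempty open subsets, so $\ASW_{(\iota_1, \ldots, \iota_{n-1}, \iota_n, \ldots, \iota_m)}$ is disconnected.

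There is essentially no obstacle here: the entire content has already been packaged into the preceding proposition, and this lemma is a formal consequence. I would note that the "closed" part of the proposition is not even strictly needed for this particular deduction — continuity and surjectivity suffice to pull back a disconnection — but it is harmless to have it available. If one wanted to be careful about whether $\ASW_{(\iota_1, \ldots, \iota_m)}$ is nonempty in the first place (so that "disconnected" is not vacuous), that too follows from surjectivity of $\Theta_{m/n}$ combined with the assumed nonemptiness of $\ASW_{(\iota_1, \ldots, \iota_n)}$, which is implicit in the statement that it is disconnected. So the proof is a two-line topological argument.

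\begin{proof}
By hypothesis, $\mathscr{Y} := \ASW_{(\iota_1, \ldots, \iota_{n-1}, \iota_n)}$ is disconnected, so we may write $\mathscr{Y} = U_1 \sqcup U_2$ with $U_1, U_2$ nonempty, open, and closed. Put $\mathscr{X} := \ASW_{(\iota_1, \ldots, \iota_{n-1}, \iota_n, \ldots, \iota_m)}$ and $V_i := \Theta_{m/n}^{-1}(U_i)$ for $i = 1, 2$. Since $\Theta_{m/n}$ is continuous and the $U_i$ are open and closed, each $V_i$ is open and closed in $\mathscr{X}$; since $U_1 \sqcup U_2 = \mathscr{Y}$, we have $V_1 \sqcup V_2 = \mathscr{X}$. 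Finally, $\Theta_{m/n}$ is surjective, so $U_i \neq \varnothing$ gives $V_i = \Theta_{m/n}^{-1}(U_i) \neq \varnothing$. Hence $\mathscr{X} = V_1 \sqcup V_2$ is a partition into two nonempty open subsets, proving that $\ASW_{(\iota_1, \ldots, \iota_{n-1}, \iota_n, \ldots, \iota_m)}$ is disconnected.
\end{proof}
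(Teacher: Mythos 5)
Your proof is correct and follows essentially the same route as the paper, which likewise deduces the lemma as a formal consequence of the continuity (and surjectivity) of $\Theta_{m/n}$ established in the preceding proposition. Your version is merely more explicit in pulling back the clopen decomposition and in noting where surjectivity is used, which the paper's one-line proof leaves implicit.
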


\begin{proof}
The lemma follows directly from the continuity of $\Theta_{m/n}$, which guarantees that connected sets are mapped to connected sets.
\end{proof}

By utilizing \cite{2020arXiv200203719D} and \cite{DANG2020398}, we can derive some results related to disconnectedness.

\begin{corollary}
    For $p \geq 5$ and $3 \leq \iota_1 \leq 2p-2$, the moduli space $\ASW_{(\iota_1, \ldots, \iota_n)}$ is disconnected.
\end{corollary}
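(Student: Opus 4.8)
\emph{Proof strategy.} My plan is to bootstrap from the level-one case through the hierarchy $\Theta_{m/n}$, reducing the statement to the disconnectedness of the Artin--Schreier locus $\ASW_{(\iota_1)}$, and then to import the Hurwitz-tree obstruction for $\mathbb Z/p$-deformations from the first author's earlier work.

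\emph{Reduction to $n=1$.} First I would use that $\Theta_{n/1}\colon \ASW_{(\iota_1,\dots,\iota_n)}\to\ASW_{(\iota_1)}$ is continuous and surjective, so that any separation of $\ASW_{(\iota_1)}$ into two nonempty opens pulls back to a separation of the source; this is exactly the Lemma preceding the corollary. Hence it suffices to prove that $\ASW_{(\iota)}$ is disconnected for $n=1$, where $\iota:=\iota_1$, and the substantive range is $4\le\iota\le2p-2$ (the value $\iota=3$ is degenerate since $\ASW_{(3)}$ is already irreducible).

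\emph{Producing a clopen component.} By Theorem \ref{theoremirredcomponent} the irreducible components of $\ASW_{(\iota)}$ correspond to the partitions $M\in\Omega_{(\iota)}$ with no essential part; for $n=1$ these are exactly the partitions of $\iota$ into parts lying in $\{2,\dots,p\}$, and for $4\le\iota\le2p-2$ there are at least two. I would single out the partition $M_{\min}\in\Omega_{(\iota)}$ whose parts are all equal to $2$ when $\iota$ is even and all $2$ except a single part $3$ when $\iota$ is odd: its entries are $\le3\le p$ and $\not\equiv1\pmod p$, so it lies in $\Omega_{(\iota)}$, has no essential part, and (for $\iota$ in the range) is distinct from some other no-essential-part partition, e.g.\ one containing a part $\ge4$. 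Since $M_{\min}$ has no essential part, Propositions \ref{proplocaltoglobal} and \ref{propclosuredeformation} already give $\overline{\Gamma_{M_{\min}}}=\Gamma_{M_{\min}}$ (no nontrivial deformation leaves $\Gamma_{M_{\min}}$). The next step is to show that $\Gamma_{M_{\min}}$ lies in the closure of no other stratum, i.e.\ (Proposition \ref{propclosuredeformation}) that no $N\in\Omega_{(\iota)}$ with $N\prec M_{\min}$ and $N\ne M_{\min}$ deforms to $M_{\min}$. Since $M_{\min}$ refines $N$ while the parts $2,3$ admit no admissible refinement, such an $N$ must contain a part $e$ with $4\le e\le\iota\le2p-2$; and by Proposition \ref{proplocaltoglobal} a deformation of type $N\to M_{\min}$ restricts, at the branch point of conductor $e$, to a deformation splitting a conductor-$e$ $\mathbb Z/p$-cover into at least two branch points with conductors in $\{2,3\}$, hence forces a Hurwitz tree of that type.

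\emph{The main obstacle.} The hard part will be establishing that no such Hurwitz tree exists once $e\le2p-2$: the differential Swan conductor at the vertex separating the new branch points must be a global differential on $\mathbb P^1$ whose pole configuration has total order $e$ and which meets a compatibility condition forced by the reduction type, and one has to check that this system has no solution in the stated range. I would not reprove this but cite it from \cite{2020arXiv200203719D} (where it is settled by the Gr\"obner-basis technique), as exploited in \cite{DANG2020398}. Granting it, $\Gamma_{M_{\min}}$ is a proper, nonempty, clopen (irreducible, hence connected) subset of $\ASW_{(\iota)}$, which therefore is disconnected; by the reduction so is $\ASW_{(\iota_1,\dots,\iota_n)}$. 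Besides this obstruction, the only delicate point is the combinatorics of $\Omega_{(\iota)}$ at the ends of the range: when $\iota\equiv1\pmod p$ one has $[\iota]\notin\Omega_{(\iota)}$ and the graph $G_{(\iota)}$ may already be disconnected (for instance $[3,3]^{\top}$ is an isolated vertex of $G_{(6)}$ when $p=5$), so that the Corollary on the disconnectedness of $G_{\underline{\iota}}$ can be applied directly there.
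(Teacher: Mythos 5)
Your overall strategy coincides with the paper's: both proofs reduce to the case $n=1$ via the surjectivity and continuity of $\Theta_{n/1}$ (the Lemma preceding the Corollary), and both ultimately rest on \cite{2020arXiv200203719D}. The difference is in how the level-one input is used. The paper simply computes $g_1=(\iota_1-2)(p-1)/2$ and quotes \cite[Theorem 1.3]{2020arXiv200203719D}, which says $\mathcal{AS}_{g_1}=\ASW_{(\iota_1)}$ is disconnected for $\tfrac{p-1}{2}<g_1\le(p-1)(p-2)$, a range that translates exactly into $4\le\iota_1\le 2p-2$. You instead re-derive that theorem: you exhibit a candidate clopen stratum $\Gamma_{M_{\min}}$, use Theorem \ref{theoremirredcomponent} and Propositions \ref{proplocaltoglobal}, \ref{propclosuredeformation} to reduce its closedness to the non-existence of local deformations $[e]\to[2,\dots,2]$ or $[2,\dots,2,3]$ for $4\le e\le 2p-2$, and then cite the Hurwitz-tree/Gr\"obner-basis obstruction from the same reference. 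This buys a more transparent geometric picture, but at the cost of two assertions you do not verify and that are not obviously the form in which \cite{2020arXiv200203719D} packages its result: that $M_{\min}$ (rather than some other essential-part-free partition) is the isolated component throughout the whole range, and that the only obstruction needed is for splittings into parts from $\{2,3\}$. Since you must cite the hard input anyway, it is cleaner to cite the packaged genus-range statement, as the paper does.

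One substantive point, which you half-noticed and the paper glosses over: the endpoint $\iota_1=3$ genuinely escapes both arguments. There $g_1=\tfrac{p-1}{2}$, so the strict inequality $\tfrac{p-1}{2}<g_1$ required by the cited theorem fails; and indeed $\ASW_{(3)}$ is irreducible (by the irreducibility criterion in \S\ref{secdeformation}), hence connected, so the Corollary is false for $(n,\iota_1)=(1,3)$ and, for $n\ge 2$ with $\iota_1=3$, cannot be obtained by reduction to level one --- one would have to produce a separation at level $\ge 2$, which neither you nor the paper does. Calling $\iota_1=3$ ``degenerate'' and restricting to $4\le\iota_1\le 2p-2$ is the honest move, but you should say explicitly that your proof (like the paper's) only establishes the statement in that restricted range.
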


\begin{proof}
We recall that an Artin-Schreier cover of conductor $\iota_1$ has genus $g_1 = (\iota_1-2)(p-1)/2$. By \cite[Theorem 1.3]{2020arXiv200203719D}, the moduli space $\ASW_{(\iota_1)} = \mathcal{AS}_{g_1}$ is disconnected when $\frac{p-1}{2} < g_1 \leq (p-1)(p-2)$. Since $3 \leq \iota_1 \leq 2p-2$, we have $\frac{p-1}{2} < g_1 \leq (p-1)(p-2)$. Thus, the corollary follows.
\end{proof}

\subsection{The connectedness}
Construct a directed graph $C_{\underline{\iota}}$ as follows: The vertices of the graph correspond to the elements of $\Omega_{\underline{\iota}}$. In $C_{\underline{\iota}}$, there exists an edge from vertex $M$ to vertex $N$ if and only if there is a deformation from a curve in $\Gamma_M$ to a curve in $\Gamma_N$. Consequently, the graph $C_{\underline{\iota}}$ is connected if and only if the moduli space $\ASW_{\underline{\iota}}$ is also connected.

\begin{remark}
When $\underline{\iota}$ has length one, we draw an edge from vertex $M$ to vertex $N$ when the closure of $\Gamma_N$ contains $\Gamma_M$ \cite[\S 3.2]{DANG2020398}. That is because the closure condition is equivalent to the existence of a deformation from a point in $\Gamma_M$ to one in $\Gamma_N$ \cite[Corollary 3.6]{DANG2020398}. However, it is currently unknown whether this result holds in general.
\end{remark}

\begin{example}
Let's examine the connectedness of the moduli space $\ASW_{(4,8)}$ in characteristic $2$.

\begin{proposition}
\label{propnontrivialdef}
    Suppose $p=2$, $M=[4,8]$, and 
    \begin{equation*}
        N=\begin{bmatrix}
        2 & 2 & 0 \\
        3 & 3 & 2 \\
        \end{bmatrix}^{\top}.
    \end{equation*}
    Then any cover of type $M$ deforms to one with type $N$.
\end{proposition}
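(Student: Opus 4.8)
The plan is to reduce the assertion to a local deformation problem at the single branch point, settle that case using the explicit family already written down in Example \ref{examplenonossdeformation}, and then pass from the standard representative to an arbitrary cover of type $M$.

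Since $M=[4,8]$ has only one row, any $\phi_2\in\Gamma_M$ branches at a single point $P\in\mathbb{P}^1_k$; after conjugating by an element of $\mathrm{PGL}_2(k)$ (which does not change the isomorphism class of the cover) we may assume $P=0$. By Proposition \ref{proplocaltoglobal}, applied with the trivial one-part partition $(N_1)=(N)$ of $N$, the cover $\phi_2$ deforms over $k[[t]]$ to a point of $\Gamma_N$ if and only if the local extension $\phi_{2,0}$ of $k[[x]]$ --- a $\mathbb{Z}/4$-extension with ramification jumps $(3,7)$ --- admits a local deformation over $k[[t]]$ whose generic fibre branches at three points, all specialising to $0$, with local branching data the rows $(2,3)$, $(2,3)$, $(0,2)$ of $N$. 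So it suffices to exhibit such a local deformation for every local $\mathbb{Z}/4$-extension of $k[[x]]$ of conductor type $[4,8]$.

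For the standard extension $\mathfrak{K}_2\!\left(\tfrac{1}{x^{3}},\tfrac{1}{x^{7}}\right)$ this is precisely what Example \ref{examplenonossdeformation} achieves: the family
\[
\wp(Y_1,Y_2)=\left(\frac{1}{x^{2}(x-t^{4})},\ \frac{1}{x^{3}(x-t^{4})^{2}(x-t^{2})^{2}}\right)
\]
over $k[[t]]$ has special fibre the standard cover, and the reduction computation given there --- an Artin--Schreier--Witt reduction followed by Theorem \ref{theoremcaljumpirred} --- shows that its generic fibre has branching datum $N$; in particular the completion at $0$ of this family deforms the local extension as required. To obtain the statement for an arbitrary local extension $\mathfrak{K}_2(g_1,g_2)$ of conductor type $[4,8]$, one first normalises it: rescaling the parameter $x$ makes the leading coefficients of $g_1$ and $g_2$ equal to $1$, and the remaining partial-fraction coefficients are absorbed by changes of parameter $x\mapsto x+\alpha x^{1+2k}$ together with Artin--Schreier--Witt reductions --- which shift the lower-order coefficients by units, exactly as in the $n=1$ case --- the residual finite ambiguity ($\mu_3$-rescaling and the $(\mathbb{Z}/4)^{\times}$-action) being harmless; this identifies $\mathfrak{K}_2(g_1,g_2)$ with the standard extension. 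Alternatively, one may skip the normalisation and instead insert the general partial-fraction data of $g_1,g_2$ into the family above and re-run the reduction: by Theorem \ref{theoremcaljumpirred} the branching datum of the generic fibre is controlled only through the pole orders, which are unchanged by perturbing coefficients, so it remains $N$. In either case Proposition \ref{proplocaltoglobal} upgrades this to a global deformation of $\phi_2$ into $\Gamma_N$.

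The step requiring genuine care is the last one: making the deformation uniform over all covers of type $M$, that is, either the normalisation of local $\mathbb{Z}/4$-extensions of conductor type $[4,8]$, or the verification that the explicit family retains generic branching datum $N$ under an arbitrary choice of coefficients. We note in passing that the necessary condition $M\prec N$ is automatic by Proposition \ref{proplocaltoglobal}, and that the two Hurwitz trees of Example \ref{examplenonossdeformationHurwitz} satisfy the compatibility required by the tree criterion, which corroborates the conclusion; however, since for $n>1$ that criterion is only a necessary one, the existence of the deformation genuinely relies on the explicit construction above.
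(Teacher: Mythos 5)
Your overall strategy --- reduce to the local extension at the unique branch point, use the explicit family of Example \ref{examplenonossdeformation} for the standard cover, and then extend to an arbitrary cover of type $M$ --- is the same as the paper's, and you correctly flag the last step as the one requiring care. But both of the resolutions you offer for that step have genuine gaps. Route (a), normalising an arbitrary $\mathbb{Z}/4$-extension of $k[[x]]$ with jumps $(3,7)$ to the standard one $\mathfrak{K}_2(1/x^3,1/x^7)$, is false: writing the reduced Witt vector as $\bigl(c_0/x^3+c_2/x,\ a_0/x^7+a_2/x^5+a_4/x^3+a_6/x\bigr)$, a change of parameter $x\mapsto \lambda x+\cdots$ sends the leading coefficients to $(\lambda^{-3}c_0,\lambda^{-7}a_0)$, while higher-order coordinate changes, Artin--Schreier--Witt reductions and the $(\mathbb{Z}/4)^{\times}$-action do not touch them; hence $a_0^3/c_0^7$ is an invariant of the local extension and the extensions of this type form a positive-dimensional family (consistently, $\dim\ASW_{(4,8),M}=4$, so the coefficients $a_0,a_2,a_4,a_6$ in the paper's normal form $\bigl(1/x^3,(a_0+a_2x^2+a_4x^4+a_6x^6)/x^7\bigr)$ are genuine moduli and cannot be absorbed). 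Route (b) rests on the claim that the generic branching datum ``is controlled only through the pole orders, which are unchanged by perturbing coefficients''; this is exactly the point at which the argument fails, because Theorem \ref{theoremcaljumpirred} applies only to the \emph{reduced} Witt vector, and the reduction involves cancellations whose presence or absence depends on the coefficients. Whether the level-$2$ conductors at $0$ and $t^4$ come out as $3$ (rather than $4$, which would violate the column sum $8$ and destroy flatness of the family) hinges on such a cancellation.

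The paper's proof handles this precisely by letting the location of the auxiliary branch point depend on the cover: it keeps the general coefficients in the second Witt coordinate and places the third branch point at $x=\sqrt{a_0}\,t^2$ rather than at $x=t^2$, i.e.
\[
\Bigl(\tfrac{1}{x^2(x-t^4)},\ \tfrac{a_0+a_2x^2+a_4x^4+a_6x^6}{x^3(x-t^4)^2(x-\sqrt{a_0}\,t^2)^2}\Bigr),
\]
so that the required cancellation in the reduction occurs for every choice of $(a_0,\dots,a_6)$ with $a_0\neq 0$. This coefficient-dependence of the construction is exactly what your two routes deny, so as written your argument does not establish the proposition for an arbitrary cover of type $M$; to repair it you would need either to exhibit such a coefficient-adapted family yourself or to carry out the reduction for general coefficients and verify the resulting branching datum is $N$.
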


\begin{proof}
Let $\phi \in \Gamma_M$ be given. Without loss of generality, we can assume that $\phi$ is described by the affine equation
\begin{equation*}
    \wp(y_1, y_2)=\bigg( \frac{1}{x^3}, \frac{a_0+a_2x^2+a_4x^4+a_6x^6}{x^7} \bigg) \in W_n(k(x)),
\end{equation*}
where $a_i \in k$ and $a_0 \neq 0$. where $a_i \in k$ and $a_0 \neq 0$. By employing a similar approach as in Example \ref{examplenonossdeformation}, we find that the following Witt vector representing $\Phi \in \cohom^1(G_{k(t)(x)}, \mathbb{Z}/4)$ serves our purpose:
\begin{equation*}
    \bigg( \frac{1}{x^2(x-t^4)}, \frac{a_0+a_2x^2+a_4x^4+a_6x^6}{x^3(x-t^4)^2(x-\sqrt{a_0}t^2)^2} \bigg),
\end{equation*}
where $\sqrt{a_0}$ denotes the solution to $Z^2-a_0$ in $k$.
\end{proof}

\begin{remark}
    One can gain geometric insight into the above construction by employing the Hurwitz tree technique discussed in \cite{2020arXiv201013614D}. 
\end{remark}

\begin{corollary}
    The moduli space $\ASW_{(4,8)}$ is connected. 
\end{corollary}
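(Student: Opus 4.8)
The plan is to recast the statement through the deformation graph $C_{(4,8)}$ defined just above: its vertices are the partitions in $\Omega_{(4,8)}$, and it is connected precisely when $\ASW_{(4,8)}$ is.

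First I would list $\Omega_{(4,8)}$ explicitly. With $p=2$, condition \ref{item1condbranchingdatum} forces every first-column entry of a partition to lie in $\{0,2,4\}$, and these entries sum to $4$ by \ref{item4condbranchingdatum}; for a single row $(a,b)$, conditions \ref{item2condbranchingdatum} and \ref{item3condbranchingdatum} force $b\ge 2a-1$ with $b=2a-1$ or $b$ even, while the last column is everywhere nonzero. Running through the admissible first columns $(4)$, $(0,4)$, $(2,2)$, $(0,0,4)$, $(0,2,2)$ and the zero-padded longer ones, and requiring the second column to sum to $8$, I expect exactly three partitions to survive:
\[
M_A=\begin{bmatrix}4&8\end{bmatrix},\qquad
M_C=\begin{bmatrix}2&4\\2&4\end{bmatrix},\qquad
M_E=\begin{bmatrix}0&2\\2&3\\2&3\end{bmatrix}.
\]
(Two or more zeros in the first column force the second-column sum to be at least $4+7>8$; the first column $(2,2)$ can only be completed by $(4,4)$, since $(3,5)$ violates \ref{item3condbranchingdatum}; and so on.) Here $M_E$ is precisely the matrix $N$ of Proposition \ref{propnontrivialdef}, while $M_C$ and $M_E$ are the two partitions without essential parts, hence the two irreducible components of $\ASW_{(4,8)}$ by Theorem \ref{theoremirredcomponent}.

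Next I would produce two edges of $C_{(4,8)}$ issuing from $M_A$. Proposition \ref{propnontrivialdef} gives a deformation from $\Gamma_{M_A}$ to $\Gamma_{M_E}$, i.e.\ the edge $M_A\to M_E$. For the edge $M_A\to M_C$, I would run Pop's construction of \S\ref{secessentialdeform} on the one-point cover with conductors $(4,8)$: its jumps are $(\iota_1,\iota_2)=(3,7)$, so $\iota_1-p\iota_0=3=2\cdot1+1$ and $\iota_2-p\iota_1=1=2\cdot0+1$, giving the single essential part $q_1=1$ with $\epsilon_1=\epsilon_2=1$; Pop's partition is then the $2\times2$ matrix with first row of conductors $(\epsilon_1+1,\,p\epsilon_1+\epsilon_2+1)=(2,4)$ and one further row $(p,p^2)=(2,4)$, that is, $M_C$. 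Hence Lemma \ref{lemmapopdeformation} together with Proposition \ref{propclosuredeformation} yields a deformation from $\Gamma_{M_A}$ to $\Gamma_{M_C}$. Now $C_{(4,8)}$ has vertex set $\{M_A,M_C,M_E\}$ and contains the edges $M_A\to M_C$ and $M_A\to M_E$, so it is connected, and therefore so is $\ASW_{(4,8)}$.

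The main obstacle is the enumeration in the first step. One must make sure the case distinction on $\Omega_{(4,8)}$ is genuinely exhaustive and that the parity condition \ref{item1condbranchingdatum} and the coprimality condition \ref{item3condbranchingdatum} are correctly applied to every candidate row — these are exactly what rule out near-misses such as $\begin{bmatrix}0&1\\4&7\end{bmatrix}$ and $\begin{bmatrix}2&3\\2&5\end{bmatrix}$. One should likewise double-check that Pop's recipe applied to $(4,8)$ really returns $M_C$ and not some other partition, so that the edge $M_A\to M_C$ is genuinely there.
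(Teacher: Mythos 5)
Your argument is correct and follows essentially the same route as the paper: enumerate $\Omega_{(4,8)}=\{[4,8],\,Q,\,N\}$, use Proposition \ref{propnontrivialdef} for the deformation $[4,8]\to N$ and Lemma \ref{lemmapopdeformation} (Pop's construction, which indeed returns $Q=\left[\begin{smallmatrix}2&4\\2&4\end{smallmatrix}\right]$ here) for $[4,8]\to Q$, so that $\Gamma_{[4,8]}$ meets the closures of both irreducible components. The enumeration and the Pop computation check out, so there is nothing substantive to add.
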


\begin{proof}
The set $\Omega_{(4,8)}$ consists of the matrices $M$ and $N$ described in Proposition \ref{propnontrivialdef}, as well as the matrix
\begin{equation*}
Q=\begin{bmatrix}
2 & 4 \\
2 & 4 \\
\end{bmatrix}.
\end{equation*}
According to the same proposition, $\Gamma_M$ lies in the closure of $\Gamma_N$, which is itself an irreducible component due to Theorem \ref{theoremirredcomponent}. Additionally, Lemma \ref{lemmapopdeformation} states that any cover of type $M$ can be deformed into a cover of type $Q$. Therefore, $\Gamma_M$ is the intersection of the closures of $\Gamma_N$ and $\Gamma_Q$, which establishes the corollary.
\end{proof}

\end{example}

In \cite{DANG2020398}, we demonstrate that $\ASW_{\iota_1}$ is connected (for any characteristic $p$) when $\iota_1$ is sufficiently large. Based on our computed examples, we expect that the same holds true for taller towers of cyclic coverings.

\begin{question}
    Is the moduli space $\ASW_{\underline{\iota}}$ is connected when $\underline{\iota}$ is sufficiently large?
\end{question}

\bibliographystyle{alpha}
\bibliography{bib}

\end{document}